\documentclass[a4paper]{amsart}
\usepackage{amssymb,latexsym}
\usepackage[utf8x]{inputenc}
\usepackage{amsmath,amsthm}
\usepackage{amsfonts,mathrsfs}
\usepackage{hyperref}
\usepackage{cleveref}
\usepackage{enumerate,units}
\usepackage[all]{xy}
\usepackage{graphicx,url}

   
\theoremstyle{plain}
\newtheorem{theorem}{Theorem}[section]
\newtheorem{corollary}[theorem]{Corollary}
\newtheorem{lemma}[theorem]{Lemma}
\newtheorem{proposition}[theorem]{Proposition}
\newtheorem{conclusion}[theorem]{Conclusion}
\newtheorem*{proposition*}{Proposition}

\newtheorem{fact}[theorem]{Fact}
\newtheorem{claim}{Claim}[theorem]
\newtheorem*{subclaim}{Subclaim}
\newtheorem*{theorem*}{Theorem}
\newtheorem*{context*}{Assumption $\diamondsuit$}

\theoremstyle{definition}
\newtheorem{definition}[theorem]{Definition}
\newtheorem{assumption}[theorem]{Assumption}
\newtheorem{example}[theorem]{Example}

\theoremstyle{remark}
\newtheorem{remark}[theorem]{Remark}

\newtheorem*{conjecture}{\textbf{Conjecture}}

\numberwithin{equation}{section}
\newcommand{\forkindep}[1][]{%
  \mathrel{
    \mathop{
      \vcenter{
        \hbox{\oalign{\noalign{\kern-.3ex}\hfil$\vert$\hfil\cr
              \noalign{\kern-.7ex}
              $\smile$\cr\noalign{\kern-.3ex}}}
      }
    }\displaylimits_{#1}
  }
}

\newenvironment{claimproof}[1][\proofname]
  {%
    \proof[#1]%
  }
  {%
    \endproof%
  }

\newenvironment{subclaimproof}[1][\proofname]
  {%
    \proof[#1]%
  }
  {%
    \endproof%
  }

\newcounter{step}                   
    {\hfill $\clubsuit$             
     \vspace{7pt}\par}

\makeatletter
\providecommand*{\cupdot}{%
  \mathbin{%
    \mathpalette\@cupdot{}%
  }%
}
\newcommand*{\@cupdot}[2]{%
  \ooalign{%
    $\m@th#1\cup$\cr
    \hidewidth$\m@th#1\cdot$\hidewidth
  }%
}
\makeatother

\newcommand{\dom}{\mathrm{Dom}}
\newcommand{\U}{\underline}
\newcommand{\Ba}{\textbf{a}}
\newcommand{\Bb}{\textbf{b}}
\newcommand{\Bc}{\textbf{c}}
\newcommand{\kap}{\varkappa}
\newcommand{\Sh}{\text{Sh}}
\newcommand{\LSh}{\text{LSh}}
\newcommand{\RSh}{\text{RSh}}

\newcommand{\E}{\mathrel{E}}
\newcommand{\R}{\mathrel{R}}
\newcommand{\D}{\mathrel{D}}
\newcommand{\suc}[1]{\mathrm{Suc}(#1)}

\DeclareMathOperator{\acl}{acl}
\DeclareMathOperator{\dcl}{dcl} 
\DeclareMathOperator{\tp}{tp}
\DeclareMathOperator{\otp}{otp}

\DeclareMathOperator{\Rg}{Range}
\DeclareMathOperator{\Img}{Im}
\DeclareMathOperator{\cf}{cf}


%
%
  
\begin{document}
\title{Infinite Stable Graphs With Large Chromatic Number II}
\begin{abstract}
We prove a version of the strong Taylor's conjecture for stable graphs: if $G$ is a stable graph whose chromatic number is strictly greater than $\beth_2(\aleph_0)$ then $G$ contains all finite subgraphs of $Sh_n(\omega)$ and thus has elementary extensions of unbounded chromatic number. This completes the picture from our previous work. The main new model theoretic ingredient is a generalization of the classical construction of Ehrenfeucht-Mostowski models to an infinitary setting, giving a new characterization of stability.
\end{abstract}

\author{Yatir Halevi}
\address{Department of Mathematics, Ben Gurion University of the Negev, Be'er-Sheva 84105, Israel.}
\email{yatirbe@post.bgu.ac.il}

\author{Itay Kaplan}
\address{Einstein Institute of Mathematics, Hebrew University of Jerusalem, 91904, Jerusalem
Israel.}
\email{kaplan@math.huji.ac.il}

\author{Saharon Shelah}
\address{Einstein Institute of Mathematics, Hebrew University of Jerusalem, 91904, Jerusalem
Israel.}
\email{shelah@math.huji.ac.il}

\thanks{The first author would like to thanks the Israel Science Foundation for its support of this research (grant No. 181/16) and the Kreitman foundation fellowship. The second author would like to thank the Israel Science Foundation for their support of this research (grant no. 1254/18). The third author would like to thank the Israel Science Foundation grant no. 1838/19. This is Paper no. 1211 in the third author's  publication list.}

\keywords{chromatic number; stable graphs; Taylor's conjecture; EM-models}
\subjclass[2010]{03C45; 05C15}

\maketitle

\section{Introduction}
The chromatic number $\chi(G)$ of a graph $G=(V,E)$ is the minimal cardinal $\kap$ for which there exists a vertex coloring with $\kap$ colors. There is a long history of structure theorems deriving from large chromatic number assumptions, see e.g., \cite{komjath}. The main topic of this paper will be the following conjecture proposed by Erd\"os-Hajnal-Shelah \cite[Problem 2]{EHS} and Taylor \cite[Problem 43, page 508]{Taylorprob43}.
\begin{conjecture}[Strong Taylor's Conjecture]
For any graph $G$ with $\chi(G)>\aleph_0$ there exists an $n\in\mathbb{N}$ such that $G$ contains all finite subgraphs of $\Sh_n(\omega)$.
\end{conjecture}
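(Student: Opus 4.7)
The statement is the full strong Taylor conjecture, which to my knowledge remains open; the paper itself resolves only the stable case with the weaker bound $\chi(G) > \beth_2(\aleph_0)$. What follows is therefore a tentative strategy rather than a routine application of the results in the excerpt.

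The plan is a dichotomy combined with a model-theoretic reduction. Starting from $G=(V,E)$ with $\chi(G) > \aleph_0$, first pass to a sufficiently saturated elementary extension $G^{\ast}\succeq G$. This move is harmless for the desired conclusion: for each finite graph $H$, the statement ``$G$ contains a copy of $H$'' is first-order (indeed existential in finitely many variables), so $G^{\ast}$ contains all finite subgraphs of $\Sh_n(\omega)$ iff $G$ does. Inside $\mathrm{Th}(G^{\ast})$ one then applies the local stability dichotomy to the edge formula $E(x,y)$: either $E$ is stable, or $E$ has the order property.

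In the unstable branch, the order property yields an indiscernible sequence $(a_i)_{i<\omega}$ in $G^{\ast}$ along which the edge relation encodes a linear order. I would extract all finite subgraphs of $\Sh_n(\omega)$ (for a suitable fixed $n$) by choosing index tuples in a Ramsey-theoretic way: the shift-graph edge pattern between $\{i_1 < \cdots < i_n\}$ and $\{i_2 < \cdots < i_{n+1}\}$ is definable from the linear order, and indiscernibility propagates such definable patterns into actual embeddings. This branch should be the more tractable one; the main technical care is to align the arity $n$ so that the shift pattern is actually forced rather than merely permitted.

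The main obstacle is the stable branch, where one must strengthen the paper's theorem by lowering its bound from $\beth_2(\aleph_0)$ to $\aleph_0$. The infinitary Ehrenfeucht-Mostowski construction invoked in the abstract appears to inherently consume two $\beth$-levels in order to produce indiscernibles of the desired type from chromatic data. Closing this gap would seem to require either a qualitatively more efficient construction of infinite indiscernibles starting from merely uncountable chromatic input, or a direct Erd\"os-Hajnal style combinatorial analysis tailored to stable graphs with $\chi(G)$ just above $\aleph_0$. This cardinality collapse — not the unstable case — is, I believe, precisely the reason the conjecture has resisted proof, and any successful argument will have to be engineered specifically for this narrow cardinal range.
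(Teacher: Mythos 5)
The statement you are asked about is a \emph{conjecture}, not a theorem of the paper, and the paper has no proof of it: the introduction explicitly records that the strong Taylor's conjecture \emph{was refuted} (\cite[Theorem 4]{HK}). So the premise of your proposal --- that the statement ``remains open'' and awaits a proof --- is wrong, and no strategy can succeed. What the paper actually proves is the stable case with the weaker hypothesis $\chi(G)>\beth_2(\aleph_0)$ (Corollary \ref{C:main corollary}); you correctly identify that result, but the conjecture as stated ranges over \emph{all} graphs with $\chi(G)>\aleph_0$, and it is precisely in that generality that it fails.

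Concretely, your dichotomy breaks on the unstable branch, which is where the counterexamples live. The order property for the edge relation gives a half-graph $E(a_i,b_j)\iff i<j$; from this, indiscernibility does not ``force'' copies of finite subgraphs of $\Sh_n(\omega)$ --- a half-graph is compatible with omitting the relevant configurations, and an unstable graph of uncountable chromatic number need not contain all finite subgraphs of any $\Sh_n(\omega)$ (otherwise the conjecture would reduce to the stable case, contradicting its refutation). The step ``the shift-graph edge pattern is definable from the linear order, and indiscernibility propagates such definable patterns into actual embeddings'' conflates consistency of a pattern with its realization; nothing in the order property guarantees the required edges between consecutive shifted tuples. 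Your stable branch is also not salvageable as stated: lowering the paper's bound from $\beth_2(\aleph_0)$ to $\aleph_0$ for arbitrary stable graphs is not achieved anywhere in the paper (only the $\omega$-stable case reaches $\aleph_0$, per the cited prequel), so even restricted to stable graphs your outline asserts a strengthening the authors do not claim.
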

Where, for a caridnal $\kappa$, the shift graph $\Sh_n(\kappa)$ is the graph whose vertices are increasing $n$-tuples of ordinals less than $\kappa$, and we put an edge between $s$ and $t$ if for every $1\leq i\leq n-1$, $s(i)=t(i-1)$ or vice-versa. The shift graphs $\Sh_n(\kappa)$ have large chromatic numbers depending on $\kappa$, see Fact \ref{F:Sh-high chrom} below. Consequently, if the strong Taylor's conjecture holds for a graph $G$, it has elementary extensions of unbounded chromatic number (having the same family of finite subgraphs). 

The strong Taylor's conjecture was refuted in \cite[Theorem 4]{HK}. See \cite{komjath} and the introduction of \cite{1196} for more historical information.

In \cite{1196} we initiated the study of variants of the strong Taylor's conjecture for some classes of graphs with \emph{stable} first order theory (stable graphs). Stablility theory, which is the study of stable theories and originated in the works of the third author in the 60s and 70s, is one of the most influential and important subjects in modern model theory. Examples of stable theories include abelian groups, modules, algebraically closed fields, graph theoretic trees, or more generally superflat graphs \cite{PZ}. Stablility also had an impact in combinatorics, e.g. \cite{MS} and \cite{CPT} to name a few.

More precisely, in \cite{1196} we proved the strong Taylor's conjecture for $\omega$-stable graphs and variants of the conjecture for superstable graphs (replacing $\aleph_0$ by $2^{\aleph_0}$) and for stable graphs which are interpretable in a stationary stable theory (replacing $\aleph_0$ by $\beth_2(\aleph_0)$). As there exist stable graphs that are not interpretable in a stationary stable structure, see \cite[Proposition 5.22, Remark 5.23]{1196}, we asked what is the situation in general stable graphs and in this paper we answer it with the following theorem.

\begin{theorem*}[Corollary \ref{C:main corollary}]
Let $G=(V,E)$ be a stable graph. If $\chi(G)>\beth_2(\aleph_0)$ then $G$ contains all finite subgraphs of $\Sh_n(\omega)$ for some $n\in \mathbb{N}$.
\end{theorem*}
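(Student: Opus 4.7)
The plan is to mimic the strategy of \cite{1196} for the case of graphs interpretable in a stationary stable structure, but to replace the classical Ehrenfeucht--Mostowski construction by the new infinitary EM construction advertised in the abstract. I would work inside a monster model $\mathfrak{C}$ of $T=\mathrm{Th}(G,E)$, and assume for contradiction that for every $n\in\mathbb{N}$ the graph $G$ fails to contain some finite subgraph of $\Sh_n(\omega)$. Since embedding a \emph{finite} graph is a first-order property of $(G,E)$, this failure transfers to every elementary extension, hence to $\mathfrak{C}$.

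The next step is to extract a long, sufficiently homogeneous configuration from the large chromatic number. The cardinal $\beth_2(\aleph_0)$ is exactly what the classical Erd\H{o}s--Rado argument consumes when producing, inside a chromatic obstruction, a countable end-homogeneous (indiscernible) sequence; so I expect to obtain a sequence $(a_i)_{i<\omega}$ (or more plausibly an indiscernible system indexed by $[\omega]^{<\omega}$, to match the shift structure) on which $E$ has a controlled, non-trivial pattern. Concretely, in analogy with \cite{1196}, one would pick a witness to $\chi(G)>\beth_2(\aleph_0)$ — say a subset $X\subseteq V$ of that size which cannot be properly colored with fewer colors — and extract from $X$ an infinite indiscernible-like seed whose induced edge pattern on increasing pairs (or $k$-tuples) is constant and non-trivial.

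Now comes the key new ingredient. In the stationary case of \cite{1196}, one feeds such a seed into a classical EM template and, using stationarity, extends it canonically to an indiscernible sequence indexed by any linear order, thereby spreading the edge pattern out into a full copy of $\Sh_n$. Without stationarity this fails, and one must instead invoke the paper's infinitary EM construction: using stability — in its full generality, not merely in the stationary-stable setting — one builds an EM-like structure from infinitary indiscernibles, in which types over long indices are pinned down by bounded-depth combinatorial data rather than by stationary extensions. Applying this machine to our seed should produce, inside a large elementary extension of $G$, an indiscernible system indexed by increasing $n$-tuples in $\omega$ whose $E$-relation is forced, by $2$-indiscernibility, to realize the $\Sh_n(\omega)$ edge pattern. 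In particular, all finite subgraphs of $\Sh_n(\omega)$ embed into that extension; being finite, they are already present in $G$, contradicting our assumption.

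The main obstacle is precisely the infinitary EM step. One has to verify that stability is strong enough — and, in view of the paper's claim of a \emph{new characterization} of stability, essentially equivalent to the assertion — that such infinitary EM templates exist and that the resulting configuration records enough of the ambient indiscernibility to determine the $E$-pattern on all $k$-subsets of the index, not merely on pairs. This is the model-theoretic heart of the argument; once it is in place, the combinatorial reduction to $\Sh_n$ and the transfer back to $G$ via elementarity and finiteness of the target subgraphs should be essentially the same packaging used in the earlier cases of \cite{1196}.
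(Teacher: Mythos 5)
Your high-level outline (replace classical EM by infinitary EM, propagate an edge pattern, transfer back to $G$ because the target subgraphs are finite) is pointed in the right direction, but the mechanism you describe is not the one that works, and the genuinely new combinatorial ingredient is missing entirely.

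First, the direction of the reduction is reversed. You propose to extract, from a chromatic obstruction $X\subseteq V$, an end-homogeneous or indiscernible ``seed'' via Erd\H{o}s--Rado, and then feed it into an EM machine. The paper does not extract anything from $G$. It instead uses Theorem~\ref{T:existence of gen em model in stable} to build a \emph{saturated} infinitary EM-model $\mathcal{G}$ of cardinality $>|G|$, so that $G$ embeds elementarily in $\mathcal{G}$; then $\chi(\mathcal{G})\geq\chi(G)$, and one works with the EM coding of \emph{all} of $\mathcal{G}$ rather than with any extracted subsequence. The Erd\H{o}s--Rado argument appears only once, deep in the coloring result Conclusion~\ref{Con:coloring-pcf}, and is not what consumes $\beth_2(\aleph_0)$ in the way you suggest; the $\beth_2$ budget is spent in Theorem~\ref{T:Taylor for infinitary EM-models} on the product over order types (Fact~\ref{F:basic prop}(2)) and in the PCF-style coloring of Section~\ref{S:PCF}.

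Second, and more seriously, the reduction to shift graphs does not ``fall out of $2$-indiscernibility.'' Once you have the EM model, every element is $b_{i,\eta}$ for a term index $i$ and an increasing sequence $\eta$; pigeonhole over $i$ and then over the order type $\otp(\eta_1,\eta_2)$ reduces the problem to a single \emph{order-type graph} $\left((\theta^{\underline{\alpha}})_<, E_{\bar a,\bar b}\right)$ with chromatic number $>\beth_2(\aleph_0)$. The core theorem of the paper is then Theorem~\ref{T:shift graphs in order-type-graphs}: such an order-type graph contains all finite subgraphs of some $\Sh_m(\omega)$. Establishing this is the subject of Sections~\ref{S:order type graphs} and~\ref{S:PCF}: one must analyze the convex equivalence relation $R$ on the index set determined by $\bar a,\bar b$, introduce the $k$-orderly / $k$-orderly-covered decompositions (Definition~\ref{D:k-ord-covered}, Corollary~\ref{C:k-ord-cov-implies shift}), show each class has finite ``period'' $n_A$ (Lemma~\ref{L:each n_A is finite}), and — the hard part — show that the periods are uniformly bounded (Lemma~\ref{L:all n_A are bounded}), which requires the coloring argument of Conclusion~\ref{Con:coloring-pcf}. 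None of this is visible in your proposal; the step ``applying this machine to our seed should produce ... an indiscernible system ... whose $E$-relation is forced, by $2$-indiscernibility, to realize the $\Sh_n(\omega)$ edge pattern'' is a black box that conceals the entire content of the paper's Sections~\ref{S:order type graphs} and~\ref{S:PCF}. Without that analysis there is no reason the indiscernible structure yields a shift graph at all: the order type $\otp(\bar a,\bar b)$ can be quite wild when $\alpha$ is an infinite ordinal, and it is precisely the large chromatic number (not indiscernibility alone) that forces it to be $k$-orderly covered and hence to carry a homomorphic image of $\Sh_k$.
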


The key tool in proving the results for $\omega$-stable graphs and superstable graphs is that every large enough saturated model is an Ehrenfeucht–Mostowski model (EM-model) in some bounded expansion of the language. 

An EM-model is a model which is the definable closure of an indiscernible sequence and was originally used by Ehrenfeucht–Mostowski in order to find models with many automorphisms \cite{EM}. It was shown by Lascar \cite[Section 5.1]{lascar} that every saturated model of cardinality $\aleph_1$ in an $\omega$-stable theory is an EM-model in some countable expansion of the language, and was later generalized to any cardinality by Mariou \cite[Theorem C]{mariou}. This was  generalized for superstable theories by Mariou \cite[Theorem 3.B]{mariouthesis} and in an unpublished preprint by the third author \cite{Sh:1151}.

It was shown by Mariou \cite[Theorem 3.A]{mariouthesis} that in a certain sense the existence of such saturated EM-models for a stable theory necessarily implies that the theory is superstable. Consequently a different tool is needed in order to prove the theorem for general stable theories. 

In the stationary stable case, we use a variant of representations of structures in the sense of \cite{919}. However, this method did not seem to easily adjust to the general stable case.

In this paper we resolved this problem by generalizing the notion of EM-models to \emph{infinitary EM-models} and show that such saturated models exist for any stable theory in Theorem \ref{T:existence of gen em model in stable}. The definition is a bit technical, so here we will settle with an informal description: 

In an EM-model every element is given by a term and a finite sequence of elements from the generating indiscernible sequence. Analogously, in an infinitary EM-model every element is given by some ``term'' with infinite (but bounded) arity and a suitable sequence of elements from an indiscernible sequence. 

We prove that the existence of saturated infinitary EM-models characterizes stability.

\begin{theorem*}[Theorem \ref{T:existence of gen em model in stable}]
The following are equivalent for a complete $\mathcal{L}$-theory $T$:
\begin{enumerate}
\item $T$ is stable.

\item Let $\kappa,\mu$ and $\lambda$ be cardinals satisfying $\kappa=\cf(\kappa)\geq \kappa(T)+\aleph_1$, $\mu^{<\kappa}=\mu\geq 2^{\kappa+|T|}$ and $\lambda=\lambda^{<\kappa}\geq \mu$ and let $T\subseteq T^{sk}$ be an expansion with definable Skolem functions such that $|T|=|T^{sk}|$ in a language $\mathcal{L}\subseteq \mathcal{L}^{sk}$. Then there exists an infinitary EM-model  $M^{sk}\models T^{sk}$ based on $(\alpha,\lambda)$, where $\alpha\in \kappa^U$ for some set $U$ of cardinality at most $\mu$, such that $M=M^{sk}\restriction \mathcal{L}$ is saturated of cardinality $\lambda$.
\end{enumerate}
\end{theorem*}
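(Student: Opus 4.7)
I treat the two directions separately. The direction $(2) \Rightarrow (1)$ is soft: if $T$ were unstable, the order property would produce $2^\mu$ types over a set of size $\mu$, so a saturated model of size $\lambda = \mu$ (admissible when $\mu^{<\kappa} = \mu$) cannot exist once $\mu$ is chosen with $2^{<\mu} > \mu$. Equivalently, one may invoke the classical characterization of stability through existence of saturated models in unstable cardinalities.

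For the main direction $(1) \Rightarrow (2)$, my plan is in three steps. First, working inside a sufficiently saturated monster model of $T^{sk}$, I would extract a highly homogeneous indiscernible system of the shape $(\alpha,\lambda)$ with $\alpha \in \kappa^U$ and $|U| \le \mu$, using an Erd\H{o}s--Rado type Ramsey argument fuelled by $\mu \ge 2^{\kappa+|T|}$. Stability is essential at two points: it guarantees that average types of the indiscernible system exist and are well-defined, and it ensures that the extracted indiscernibility is strong enough (behaving like set-indiscernibility in each coordinate) for the $<\kappa$-ary infinitary EM-terms to cohere.

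Second, let $M^{sk}$ be the $\mathcal{L}^{sk}$-definable closure of this system under $<\kappa$-ary terms; the cardinality bound $|M^{sk}| \le |T^{sk}|^{<\kappa} \cdot \lambda^{<\kappa} = \lambda$ is routine from $\lambda = \lambda^{<\kappa}$ and $|T^{sk}| = |T| \le \lambda$. The main obstacle is the third step: proving saturation of the reduct $M = M^{sk}\restriction \mathcal{L}$. Given $p(x) \in S(A)$ with $|A| < \lambda$, the hypothesis $\kappa \ge \kappa(T)$ furnishes $A_0 \subseteq A$ of size $<\kappa$ over which $p$ does not fork. Then $A_0$ lies inside the EM-closure of some small sub-index, and one must locate in the indiscernible system a fresh $<\kappa$-tuple whose associated term realizes the non-forking extension of $p\restriction A_0$. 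Here the width $\mu \ge 2^{\kappa+|T|}$ together with the closure conditions $\mu^{<\kappa} = \mu$ and $\lambda^{<\kappa} = \lambda$ ensure the index is rich enough to display every possible $<\kappa$-dimensional type configuration, while $\kappa \ge \kappa(T) + \aleph_1$ lets non-forking interact cleanly with the infinitary term structure. Coordinating ``realization by a term'' uniformly across all such $p$ is the delicate core of the argument.
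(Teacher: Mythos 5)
Your plan for $(1)\Rightarrow(2)$ has a genuine gap in two places, and the second one is the decisive one.

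First, you propose to extract the indiscernible system by an Erd\H{o}s–Rado/Ramsey argument. The paper instead \emph{constructs} the $(\alpha',\lambda)$-indiscernible system directly by transfinite induction (Proposition~\ref{P:existence-of-indisc}): at each index a specific coheir (the type of a non-principal ultrafilter) is realized, and this is repeated over \emph{all} non-algebraic $1$-types over all $<\kappa$-sized subsets of the already-built part. Mere extraction would give indiscernibility, but it would not obviously give the coherence property of Proposition~\ref{P:existence-of-indisc}(2): that for every small $A$ inside the EM-closure and every non-algebraic $p\in S(A)$, a dedicated index $(j,k,\eta)$ has $a_{j,k,\eta}\models p$. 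Without this built-in type-realization your third step cannot start.

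Second, and more seriously, your saturation step does not go through as sketched. Given $p\in S(A)$ with $|A|<\lambda$, finding $A_0\subseteq A$ of size $<\kappa$ over which $p$ does not fork and then realizing ``the non-forking extension of $p\restriction A_0$'' does \emph{not} realize $p$ itself: $p\restriction A_0$ need not be stationary, since $A_0$ is just a set, and $p$ is only one of possibly many non-forking extensions to $A$. The paper avoids this by a two-step criterion (Lemma~\ref{L:saturation}, a restatement of \cite[Lemma III.3.10]{classification}): first establish $(\kappa(T)+\aleph_1)$-saturation directly from the built-in realizations (Proposition~\ref{P:existence-of-indisc}(3.b)); then show that every countable indiscernible sequence over a $<\kappa(T)$-sized base in $M$ extends inside $M$ to length $\lambda$. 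The latter uses Proposition~\ref{P:existence-of-indisc}(3.c) to produce a long coheir Morley sequence, whose $\mathcal{L}$-reduct is governed by the limit type $\lim(I)$ — this is exactly where stability enters — and Lemma~\ref{L:saturation} then upgrades this to full $\lambda$-saturation. That indirect route (small saturation plus long indiscernible extensions) is the key missing idea in your proposal.

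For $(2)\Rightarrow(1)$ your instinct is right but the cardinal arithmetic needs care: you cannot just say ``choose $\mu$ with $2^{<\mu}>\mu$,'' you must exhibit $\kappa,\mu,\lambda$ satisfying all the stated constraints \emph{and} with $\lambda^{<\lambda}>\lambda$. The paper takes $\lambda=\mu=\beth_\kappa(\kappa)$, verifies $\lambda^{<\kappa}=\lambda$ by regularity of $\kappa$, notes that $\cf(\lambda)=\kappa<\lambda$ makes $\lambda^{<\lambda}>\lambda$, and then applies \cite[Theorem VIII.4.7]{classification} to deduce $\lambda$-stability.
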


Section \ref{s:em-models} is the only purely model theoretic section and is the only place where stability is used. The results of this section (more specifically Theorem \ref{T:existence of gen em model in stable}) are only used in Section \ref{s:conclusion}. In Section \ref{S:order type graphs} we study graphs on (perhaps infinite) increasing sequences whose edge relation is determined by the order type. Aiming to prove that if the chromatic number is large, then one can embed shift graphs, we analyze several different cases. The last case we deal with in Section \ref{S:order type graphs} turns out to be rather complicated, so we devote all of Section \ref{S:PCF} to it. There, we employ ideas inspired by PCF theory to get a coloring of small cardinality. Section \ref{s:conclusion} concludes.

\section{Preliminaries}
We use small latin letters $a,b,c$ for tuples and capital letters $A,B,C$ for sets. We also employ the standard model theoretic abuse of notation and write $a\in A$ even for tuples when the length of the tuple is immaterial or understood from context.

For any two sets $A$ and $J$, let $A^{\underline{J}}$ be the set of injective functions from $J$ to $A$ (where the notation is taken from the falling factorial notation), and if $(A,<)$ and $(J,<)$  are both linearly ordered sets, let $(A^{\U J})_<$ be the subset of $A^{\U J}$ consisting of strictly increasing functions. If we want to emphasize the order on $J$ we will write $(A^{\U{(J,<)}})_<$.  

Throughout this paper, we interchangeably use sequence notation and function notation for elements of $A^{\U J}$, e.g. for $f\in A^{\U J}$, $f(i)=f_i$. For any sequence $\eta$ we denote by $\Rg(\eta)$ the underlying set of the sequence (i.e. its image). If $(A,<^A)$ and $(B,<^B)$ are linearly ordered sets, then the most significant coordinate of the lexicographic order on $A\times B$ is the left one.

\subsection{Stability}
We use fairly standard model theoretic terminology and notation, see for example \cite{TZ,guidetonip}. We gather some of the needed notions. For stability, the reader can also consult with \cite{classification}.

We denote by $\tp(a/A)$ the complete type of $a$ over $A$. Let $(I,<)$ be a linearly ordered set. A sequence $\langle a_i: i\in I\rangle$ inside a first order structure is \emph{indiscernible} if for any $i_1<\dots<i_k$ and $j_1<\dots<j_k$ in $I$,
\[\tp(a_{i_1},\dots,a_{i_k})=\tp(a_{j_1},\dots, a_{j_k}).\]
A structure $M$ is \emph{$\kappa$-saturated}, for a cardinal $\kappa$, if any type $p$ over $A$ with $|A|< \kappa$ is realized in $M$. The structure $M$ is \emph{saturated} if it is $|M|$-saturated.
A \emph{monster model} for $T$, usually denoted by $\mathbb{U}$, is a large saturated model containing all sets and models (as elementary substructures) we will encounter\footnote{There are set theoretic issues in assuming that such a model exists, but these are overcome by standard techniques from set theory that ensure the generalized continuum hypothesis from some point on while fixing a fragment of the universe. The reader can just accept this or alternatively assume that $\mathbb{U}$ is merely $\kappa$-saturated and $\kappa$-strongly homogeneous for large enough $\kappa$.}.  All subsets and models will be \emph{small}, i.e. of cardinality $<|\mathbb{U}|$.

A first theory $T$ is \emph{stable} if there does not exist a model $M\models T$, a formula $\varphi(x,y)$ and elements $\langle a_i\in M: i<\omega\rangle$ such that $M\models \varphi(a_i,a_j)\iff i<j$. An equivalent definition is that there exists some $\kappa\geq |T|$ such that for all $M\models T$ with $|M|\leq\kappa$ the cardinality of complete types over $M$ is at most $\kappa$. For any such $\kappa$, $T$ has a saturated model of cardinality of $\kappa$ \cite[Theorem VIII.4.7]{classification}. 

Every indiscernible sequence in a stable theory is \emph{totally indiscernible}, i.e. in the notation above, for any $i_1,\dots,i_k$ and $j_1,\dots,j_k$ in $I$,
\[\tp(a_{i_1},\dots,a_{i_k})=\tp(a_{j_1},\dots, a_{j_k}).\]
Other than these notions we will also require basic understanding in forking. See the above references for more information.

\subsection{Graph theory}
Here we gather some facts on graphs and the chromatic number of graphs (all can be found in \cite{1196}).

By a \emph{graph} we mean a pair $G=(V,E)$ where $E\subseteq V^2$ is symmetric and irreflexive. A \emph{graph homomorphism} between $G_1=(V_1,E_1)$ and $G_2=(V_2,E_2)$ is a map $f:V_1\to V_2$ such that $f(e)\in E_2$ for every $e\in E_1$. If $f$ is injective we will say that $f$ embeds $G_1$ into $G_2$ a subgraph. If in addition we require that $f(e)\in E_2$ if and only if $e\in E_1$ we will say that $f$ embeds $G_1$ into $G_2$ as an induced subgraph.

\begin{definition}
Let $G=(V,E)$ be a graph.
\begin{enumerate}
\item For a cardinal $\kap$, a \emph{vertex coloring} (or just coloring) of cardinality $\kap$ is a function $c:V\to \kap$ such that $x\E y$ implies $c(x)\neq c(y)$ for all $x,y\in V$.
\item The \emph{chromatic number} $\chi(G)$ is the minimal cardinality of a vertex coloring of $G$.
\end{enumerate}
\end{definition}

These are the basic properties of $\chi(G)$ that we will require.

\begin{fact}\cite[Lemma 2.3]{1196}\label{F:basic prop}
Let $G=(V,\E)$ be a graph.
\begin{enumerate}

\item If $V=\bigcup_{i\in I} V_i$ then $\chi(G)\leq \sum_{i\in I} \chi(V_i, E\restriction V_i)$.
\item If $E=\bigcup_{i\in I} \E_i$ (with the $E_i$ being symmetric)  then $\chi(G)\leq \prod_{i\in I}\chi(V,E_i)$.
\item If $\varphi:H\to G$ is a graph homomorphism then $\chi(H)\leq \chi(G)$.
\item If $\varphi:(H,E^H)\to (G,E^G)$ is a surjective graph homomorphism with $e\in E^H\iff \varphi(e)\in E^G$ then $\chi(H)=\chi(G)$.
\end{enumerate}
\end{fact}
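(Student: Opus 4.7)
The plan is to establish each clause of Fact \ref{F:basic prop} by exhibiting an explicit coloring that witnesses the bound, appealing only to the definition of $\chi(G)$ as the least cardinality of a proper vertex coloring. The required cardinal arithmetic is routine.

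For (1), I would fix an optimal coloring $c_i \colon V_i \to \chi(V_i, E \restriction V_i)$ for each $i \in I$ and view the codomains as disjointly tagged by $I$. For every $v \in V$, choose an index $i_v$ with $v \in V_{i_v}$ and set $c(v) = (i_v, c_{i_v}(v))$. Given an edge $\{v,w\} \in E$, either $i_v \neq i_w$, so $c(v)$ and $c(w)$ differ in the first coordinate, or $i_v = i_w = i$, in which case $\{v,w\}$ lies in $E \restriction V_i$ and $c_i(v) \neq c_i(w)$. This uses at most $\sum_{i \in I} \chi(V_i, E \restriction V_i)$ colors. For (2), take optimal colorings $c_i \colon V \to \chi(V, E_i)$ and define the product coloring $c(v) = (c_i(v))_{i \in I}$; any edge $e \in E$ belongs to some $E_{i_0}$ and is therefore separated by the $i_0$-coordinate, giving the product bound.

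Part (3) amounts to noting that if $c$ is an optimal coloring of $G$, then $c \circ \varphi$ is a proper coloring of $H$ with $\chi(G)$ colors, since graph homomorphisms carry edges to edges. Part (4) combines (3) with its converse: use surjectivity of $\varphi$ to choose a section $\sigma \colon V^G \to V^H$ and pull back an optimal coloring $c$ of $H$ to $c \circ \sigma$ on $G$. If $\{x,y\} \in E^G$, then $\varphi(\{\sigma(x), \sigma(y)\}) = \{x,y\} \in E^G$, so the biconditional in the hypothesis forces $\{\sigma(x), \sigma(y)\} \in E^H$, and the endpoints therefore receive distinct colors. The only mild subtleties are tagging by $i$ in (1) to form a genuine disjoint union of color palettes and invoking the induced-subgraph hypothesis in (4) to pull the coloring back through the section; no step presents a real obstacle.
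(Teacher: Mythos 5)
Your proof is correct, and it is the standard argument one would find for this fact (the paper itself states it without proof, citing the authors' earlier work). All four parts are handled by exactly the right explicit colorings: the tagged/disjoint-union palette for (1), the product palette for (2), precomposition with the homomorphism for (3), and the combination of (3) with a section through the surjection for (4), where the biconditional edge condition is precisely what is needed to make the pulled-back coloring proper.
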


\begin{example} For any finite number $1\leq r$ and any linearly ordered set $(A,<)$, let $\Sh_r(A)$, or $\Sh_r(A,<)$ if we want to emphasize the order, (\emph{the shift graph on $A$}) be the following graph: its set of vertices is the set $(A^{\U r})_<$ of increasing $r$-tuples, $s_0,\dots,s_{r-1}$, and we put an edge between $s$ and $t$ if for every $1\leq i\leq r-1$, $s(i)=t(i-1)$, or vice-versa. It is an easy exercise to show that $\Sh_r(A)$ is a connected graph. If $r=1$ this gives $K_{A}$, the complete graph on $A$.
\end{example}

\begin{fact}\cite[Fact 2.6]{1196}\cite[Proof of Theorem 2]{EH-shift}\label{F:Sh-high chrom}
Let $2\leq r<\omega$ be a natural number and $\kap$ be a cardinal, \[\chi\left(\Sh_r(\beth_{r-1}\left(\kap\right)^{+})\right)\geq\kap^{+}.\]
\end{fact}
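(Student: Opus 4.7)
The plan is to reduce the statement to the classical Erd\H{o}s--Rado partition theorem $\beth_{r-1}(\kap)^+ \to (\kap^+)^r_\kap$. Assume for contradiction that there exists a proper coloring $c \colon (A^{\U r})_< \to \kap$ of $\Sh_r(A)$, where $A := \beth_{r-1}(\kap)^+$. Since every finite subset of an ordinal has a unique increasing enumeration, I will identify $c$ with a coloring of the $r$-element subsets of $A$ by $\kap$ colors.

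By Erd\H{o}s--Rado, I would then extract a homogeneous subset $H \subseteq A$ with $\otp(H) \geq \kap^+$ (in fact, any $H$ of cardinality $\kap^+$ suffices for what follows): all increasing $r$-tuples from $H$ receive the same color $\gamma \in \kap$. Pick any $r+1$ elements $\alpha_0 < \alpha_1 < \cdots < \alpha_r$ of $H$ and consider the two tuples $s := (\alpha_0, \alpha_1, \ldots, \alpha_{r-1})$ and $t := (\alpha_1, \alpha_2, \ldots, \alpha_r)$. Both lie in $(A^{\U r})_<$, and by construction $s(i) = t(i-1)$ for every $1 \leq i \leq r-1$, so $s$ and $t$ are adjacent in $\Sh_r(A)$. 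Yet $c(s) = c(t) = \gamma$ by homogeneity, contradicting properness of $c$.

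There is essentially no obstacle here once Erd\H{o}s--Rado is invoked; the only thing to verify is that the two specific tuples $s$ and $t$ built from consecutive elements of a homogeneous set meet the shift adjacency condition, which is immediate from the definition of $\Sh_r$. This is the same argument as in \cite{EH-shift}, and an easy induction on $r$ would also work if one preferred to avoid citing Erd\H{o}s--Rado directly, using the $r=2$ case (the classical shift graph bound) as the base and stepping up via the standard $\beth_{r-1}(\kap)^+ \to (\kap^+)^r_\kap$ inductive proof.
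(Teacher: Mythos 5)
Your proof is correct and is exactly the Erd\H{o}s--Rado argument that the paper itself points to via the citation to Erd\H{o}s--Hajnal's proof of Theorem~2 in \emph{EH-shift} (the paper does not reprove the fact; it simply cites it). The only detail worth noting is that you need $r+1$ distinct elements from the homogeneous set $H$ to form the two adjacent shifted tuples, which is guaranteed since $|H| \geq \kappa^+ > r$; beyond that the argument is complete and no gap remains.
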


Finally, the following fact is a very useful tool.
\begin{fact}\cite[Proposition 3.2]{1196}\label{F:homomorphism is enough}
Let $G=(V,E)$ be a graph and assume there exists an homomorphism of graphs $t:\Sh_k(\omega)\to G$. Then there exists $n\leq k$, such that 
\begin{itemize}
\item[($\dagger$)] $G$ contains all finite subgraphs of $\Sh_n(\omega)$.
\end{itemize}

Consequently, if $H$ is a graph that contains all finite subgraphs of $\Sh_k(\omega)$, for some $k$, and $t:H\to G$ is a homomorphism of graphs, then there exists some $n\leq k$ such that $G$ satisfies ($\dagger$).
\end{fact}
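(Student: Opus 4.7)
The plan is to apply canonical Ramsey arguments to $t\colon \Sh_k(\omega)\to G$. First I view $t$ as an equivalence relation on $(\omega^{\U k})_<$ (two tuples being equivalent when $t$ sends them to the same vertex) and invoke the Erd\H{o}s--Rado canonical Ramsey theorem: there exist an infinite $X\subseteq\omega$ and a subset $J\subseteq\{0,\dots,k-1\}$, say of size $n$, such that for all $s,s'\in (X^{\U k})_<$ one has $t(s)=t(s')$ iff $s\restriction J=s'\restriction J$. Next I refine $X$ by an iterated Ramsey step on pairs $(s,s')$ of increasing $k$-tuples, coloring by the order-type of $\Rg(s)\cup\Rg(s')$ together with the truth value of ``$t(s)$ and $t(s')$ are adjacent in $G$'', and pass to an infinite $X'\subseteq X$ on which this coloring depends only on the order-type.

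Two elementary observations restrict $J$. First, $J\neq\emptyset$: otherwise $t$ would be constant on $((X')^{\U k})_<$, but $\Sh_k(X')$ has edges and $G$ is irreflexive. Second, any two shift-related $k$-tuples in $((X')^{\U k})_<$ disagree coordinatewise, so since $J\neq\emptyset$ they also satisfy $s\restriction J\neq s'\restriction J$; hence $t(s)\neq t(s')$, and the homomorphism property forces $t(s)$ and $t(s')$ to be adjacent in $G$. Thus edges of $\Sh_k(X')$ are transported faithfully into $G$.

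To extract $\Sh_n$-subgraphs of $G$, I would, for each $u\in ((X')^{\U n})_<$, choose an extension $s_u\in ((X')^{\U k})_<$ with $s_u\restriction J=u$, distributing the remaining $k-n$ coordinates in $X'$ with enough spacing that shift relations among the $u$'s lift to shift relations among the $s_u$'s. The composition $u\mapsto t(s_u)$ is then an injective graph homomorphism $\Sh_n(X')\to G$, and because $X'$ is infinite every finite subgraph of $\Sh_n(\omega)$ is realized in $G$. For the ``consequently'' clause: for any prescribed finite $F\subseteq\Sh_n(\omega)$, pick a sufficiently large finite $F'\subseteq\Sh_k(\omega)$ embedded in $H$ (using the hypothesis on $H$), compose with $t$, and run the finite Ramsey version of the above to realize $F$ in $G$; since $n$ takes only finitely many values $\leq k$, pigeonhole singles out one $n$ that works uniformly for all finite targets.

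The step I expect to be the main obstacle is the lifting just sketched: when $J$ is not an interval in $\{0,\dots,k-1\}$, a shift in $\Sh_n$ does not automatically lift to a shift in $\Sh_k$, so one is forced either to replace $n=|J|$ by a finer combinatorial invariant of $J$ (for instance the size of the largest maximal block of consecutive indices in $J$) or to make a more delicate coordinated choice of the extensions $s_u$. Carrying out this adjustment and verifying that the resulting map still sends every edge of the target finite subgraph to an edge in $G$ is where the bulk of the bookkeeping lies.
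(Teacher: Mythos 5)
This fact is cited by the paper from \cite[Proposition~3.2]{1196} rather than proved in the present text, so there is no in-paper proof to compare against; I will assess your attempt on its own.

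Your framework is the right starting point: canonical Ramsey to reduce $t$ to a $J$-projection, then observing $J\neq\emptyset$ because $G$ is loop-free and $\Sh_k$ has edges; and when $J$ is an interval, $\bar t\colon u\mapsto t(s)$ (for any $s$ with $s\restriction J=u$) really is an injective graph homomorphism $\Sh_{|J|}(X')\to G$, which immediately gives $(\dagger)$ with $n=|J|$. The genuine gap is exactly where you flag it, but it is more serious than ``bookkeeping.'' For non-interval $J$ the strategy of lifting each $u\in(X'^{\underline n})_<$ to $s_u$ with $s_u\restriction J=u$ so that shift-pairs lift to shift-pairs is not merely awkward --- it is impossible once the finite target is large enough. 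Concretely, the constraints force a directed-graph homomorphism $\LSh_n(N)\to\LSh_k(\omega)$ whose $J$-coordinates recover $u$, and one can verify (by tracking which values must agree down the shift chains, e.g.\ already for $J=\{0,2\}$ or $J=\{0,1,3\}$ with $N\ge 5$) that no such coordinated choice of the $s_u$ can exist. ``Distributing the remaining coordinates with enough spacing'' cannot repair this, because the obstruction is about shared coordinates, not gaps.

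Your alternative --- replace $n=|J|$ by $m=$ the size of the largest maximal block of consecutive indices in $J$ --- is indeed the right invariant, but realizing it is a construction, not a routine adjustment. What actually works: after the second Ramsey refinement, adjacency of $\bar t(u)$ and $\bar t(u')$ depends only on $\otp(u,u')$, and is guaranteed when $\otp(u,u')=\tau_J$, the order type of $\bigl(s\restriction J,\ \mathrm{shift}(s)\restriction J\bigr)$. The type $\tau_J$ decomposes along the maximal blocks $I_1<\dots<I_r$ of $J$: within each block it is the $|I_s|$-shift type, and distinct blocks occupy disjoint intervals of values. So to embed a finite $F\subseteq\Sh_m(\omega)$ one must build $u_w\in(X'^{\underline n})_<$ \emph{blockwise}: on a largest block one copies $w$; on every other block $I_s$ (of size $m_s\le m$) one applies a directed homomorphism $\LSh_m\to\LSh_{m_s}$ obtained from a topological labelling of $(m-m_s+1)$-subshifts; and one scales the block ranges to be disjoint in the right order. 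Only then does one check that $w\to w'$ forces $\otp(u_w,u_{w'})=\tau_J$. None of this appears in your writeup, and the ``delicate coordinated choice of extensions $s_u$'' phrasing still suggests you are trying to lift into $\Sh_k$-shifts, which, as noted, cannot succeed. Finally, for the ``consequently'' clause, the finitary-Ramsey-plus-pigeonhole sketch is shakier than the routine alternative: use compactness to pass to some $G'\equiv G$ admitting a homomorphism from all of $\Sh_k(\omega)$, apply the main part to $G'$, and note that ``contains (a copy of) this finite graph as a subgraph'' is a first-order property, hence reflects back to $G$.
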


\section{Infinitary EM-models and stability}\label{s:em-models}
Let $T$ be a first order theory and $\mathbb{U}$ a monster model for $T$.

An \emph{EM-Model} for $T$ is a model that is the definable closure of an indiscernible sequence (possibly in some expansion of the theory which admits Skolem functions).

Every element in an EM-model is of the form $t(a_{i_1},\dots, a_{i_n})$, where $t$ is a term (in the expanded language) and $a_{i_1},\dots, a_{i_n}$ are elements of the indiscernible sequence. In other words, to any element we may associate a pair $(i,\eta)$, where $i<|T|$ (this codes the term $t_i(\bar x_i)$) and $\eta$ is an increasing sequence of cardinality $|\bar x_i|$. 

Mariou \cite{mariouthesis,mariou} and Shelah \cite{Sh:1151} proved that if $T$ is $\omega$-stable or even superstable then it has an EM-model in some expansion of the language whose restriction to the original language is saturated. For general stable theories, as we will see in this section, one needs to allow ``terms'' with, possibly, infinite arity to get a parallel result.
 
Let $\kappa\geq \aleph_0$ be a regular cardinal (which we think of as a bound on the arity) and let $\mu$ be a cardinal (which we think of as a bound on the number of terms). Let $\alpha\in \kappa^\mu$ be a function assigning to each function its arity.

%
%
%

\begin{definition}\label{D:alpha-I}
Let $\kappa$ a cardinal, $(I,<)$ a linearly ordered set, $U$ a set and $\alpha\in \kappa^U$.
Let $a=\langle a_{i,\eta}:i\in U ,\,\eta\in (I^{\U{\alpha_i}})_<\rangle$ be a sequence of tuples from $\mathbb{U}$.

We say that $a$ is \emph{$(\alpha,I)$-indiscernible} if for every  $\langle i_j\in U:j<k\rangle$,  $\langle \eta_j\in (I^{\U{\alpha_{i_j}}})_<:j<k\rangle$ and $\langle \rho_j\in (I^{\U{\alpha_{i_j}}})_<:j<k\rangle$ if
there exists a partial isomorphism of $(I,<)$ mapping $\langle \eta_j:j<k\rangle$ to $\langle \rho_j:j<k\rangle$ then $\langle a_{i_j,\eta_j}:j<k\rangle$ and $\langle a_{i_j,\rho_j}:j<k\rangle$ have the same type.
\end{definition}
%

Recall that given a subset $A\subseteq \mathbb{U}$ and an ultrafilter $\mathcal{D}$ on $A$ we may define the global average type $p_\mathcal{D}=\mathrm{Av}(\mathcal{D},\mathbb{U})$ by
\[p_\mathcal{D}\vdash \varphi(x,b)\iff \varphi(A,b)\in \mathcal{D}.\] Obviously, $p_\mathcal{D}$ is finitely satisfiable in $A$. 
\begin{remark}\label{R:global-ultrafilter}
If $\mathcal{D}$ is an ultrafilter on $A$ and $A\subseteq B$ then $\{U\in B: \exists V\in \mathcal{D} (V\subseteq U)\}$ is the a unique ultrafilter $\mathcal{D}^\prime$ on $B$ containing $\mathcal{D}$ and $p_{\mathcal{D}}=p_{\mathcal{D}^\prime}$. 

\end{remark}

For any linearly ordered set $(I,<)$ and $A\subseteq B$, we say that $\langle a_i:i\in I\rangle$ realizes $(p_{\mathcal{D}})^{\otimes I}|B$ if for any $k\in I$, $a_k\models p_{\mathcal{D}}|B\langle a_i :i<k\in I\rangle$.

\begin{proposition}\label{P:existence-of-indisc}
Assume that $\mathbb{U}$ has definable Skolem functions. Let $\kappa\geq \aleph_0$ a regular cardinal and $\mu^{<\kappa}=\mu\geq 2^{\kappa+|\mathcal{L}|}$ a cardinal.

Let $\alpha\in \kappa^\mu$ be any function and let $(I,<)$ be any infinite linear order. 
\begin{enumerate}
\item  There exist $U\subseteq \mu\times \mu$ and an $(\alpha^\prime,I)$-indiscernible sequence \[a=\langle a_{i,k,\eta}: (i,k)\in U,\, \eta\in (I^{\U{\alpha_i}})_<\rangle,\] where $\alpha^\prime\in\kappa^U$ is defined by $\alpha^\prime_{(i,k)}=\alpha_i$, for $(i,k)\in U$, such that $\mathbb{U}\restriction \dcl(\Rg (a))\prec \mathbb{U}$.

\item For $j<\mu$ and $\eta\in (I^{\U{\alpha_j}})_<$, if $A\subseteq F_{j,\eta}=\dcl (\{a_{i,k,\nu}: (i,k)\in U,\, i<j,\, \nu\in (\Rg(\eta)^{\U{\alpha_i}})_<\})$ with $|A|<\kappa$ and non-algebraic $p\in S(A)$ then there exists $k<\mu$ with $(j,k)\in U$ such that $a_{j,k,\eta}\models p$. Moreover, if $p$ is finitely satisfiable in $A$ then so is $\tp(a_{j,k,\eta}/F_{j,\eta})$. 

\item  If in addition, $(I,<)$ is well-ordered and $\alpha$ satisfies $\alpha_i=(i\mod \kappa)$ then 
\begin{enumerate}
\item for any $A\subseteq \dcl(\Rg(a))$ with $|A|<\kappa$ there exist $i<\mu$ and $\eta\in (I^{\U{\alpha_i}})_<$ satisfying $A\subseteq F_{i,\eta}$;
\item $\dcl(\Rg(a))$ is $\kappa$-saturated.
\item Assume that $(I,<)$ is a cardinal with $\cf(I)\geq \kappa$. For any infinite $A\subseteq B \subseteq \dcl(\Rg(a))$, with $|B|<\kappa$, there is a non principal ultrafilter $\mathcal{D}$ on $A$ such that $(p_{\mathcal{D}})^{\otimes I}|B$ is realized in $\dcl(\Rg(a))$. 

\end{enumerate}
\end{enumerate}

%

%
\end{proposition}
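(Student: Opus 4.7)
The plan is to construct $\langle a_{j,k,\eta}\rangle$ by transfinite recursion on $j<\mu$, adding one layer of the system per stage. At stage $j$ the partial system indexed by $U_j=U\cap(j\times\mu)$ is assumed already $(\alpha'\restriction U_j,I)$-indiscernible and to satisfy clause (2) for all $i<j$. By that indiscernibility, the substructure $F_{j,\eta}$ depends up to canonical isomorphism only on the order type of $\Rg(\eta)$, which equals the fixed $\alpha_j$; I fix an abstract copy $F_j^\ast$ together with a template increasing $\alpha_j$-tuple $\eta^\ast$ and canonical identifications $F_j^\ast\to F_{j,\eta}$ arising from the partial order-isomorphism of $(I,<)$ sending $\eta^\ast$ to $\eta$.

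Next, I enumerate the pairs $(A,p)$ with $A\subseteq F_j^\ast$, $|A|<\kappa$ and $p\in S(A)$ non-algebraic, injectively as $\{(A_k,p_k):k\in J_j\subseteq\mu\}$; the bound $|F_j^\ast|^{<\kappa}\cdot 2^{\kappa+|T|}=\mu$ (using $\mu^{<\kappa}=\mu\geq 2^{\kappa+|T|}$) makes this possible. Set $\{(j,k):k\in J_j\}\subseteq U$, and for each $k$ extend $p_k$ to a global coheir $p_k^\ast$ over $A_k$, yielding $\tilde p_k=p_k^\ast\restriction F_j^\ast$, still finitely satisfiable in $A_k$. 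Now realize in $\mathbb{U}$ the joint condition: for every $k\in J_j$ and $\eta\in(I^{\U{\alpha_j}})_<$, the element $a_{j,k,\eta}$ realizes the $\eta$-image of $\tilde p_k$ on $F_{j,\eta}$, and the full family satisfies $(\alpha'\restriction U_{j+1},I)$-indiscernibility over the earlier layers. Finite realizability reduces to matching finitely many $\eta$'s: the partial order-isomorphisms of $(I,<)$ between them induce, via $(\alpha'\restriction U_j,I)$-indiscernibility of the earlier layers, partial isomorphisms of sub-structures of $\mathbb{U}$, which lift to automorphisms by strong homogeneity; the global-coheir choice transports finite satisfiability along. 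Saturation of $\mathbb{U}$ provides the full realization, and closing under the Skolem functions of $T^{sk}$ gives $\mathbb{U}\restriction\dcl(\Rg(a))\prec\mathbb{U}$, proving (1) and (2).

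For part (3), the assumption $\alpha_i=(i\bmod\kappa)$ makes (3a) immediate: given $A\subseteq\dcl(\Rg(a))$ with $|A|<\kappa$, $A$ lies in the union of fewer than $\kappa$ many $F_{i_m,\nu_m}$; pick $i<\mu$ large with $\alpha_i\geq\sup_m\alpha_{i_m}$ and, using that $I$ is well ordered of size $\geq\kappa$, choose $\eta\in(I^{\U{\alpha_i}})_<$ enumerating a superset of $\bigcup_m\Rg(\nu_m)$ to obtain $A\subseteq F_{i,\eta}$. Then (3b) follows at once from (3a) together with (2) (algebraic types are already in $\dcl(A)$). For (3c), absorb $B$ into some $F_{j_0,\eta_0}$ via (3a), pick $j>j_0$ with $\alpha_j=\alpha_{j_0}+1$ (which exists since $\alpha_i$ takes all values cofinally in $\mu$), let $k\in J_j$ index a global-coheir extension of a type over the pre-image of $B$ whose $\eta_0$-image over $B$ is finitely satisfiable in $A$, and consider the indiscernible sequence $\langle a_{j,k,\eta_0{}^\frown t}:t\in I,\,t>\max\Rg(\eta_0)\rangle$; by the global-coheir property each term's type over $B$ together with the earlier terms is finitely satisfiable in $A$, so for the ultrafilter $\mathcal{D}$ on $A$ averaging this common type, the sequence realizes $(p_{\mathcal{D}})^{\otimes I}|B$ after a cofinal reindexing along $I$ (using $\cf(I)\geq\kappa$).

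The chief obstacle is the inductive step in paragraph 2: one must realize all prescribed types simultaneously for each $\eta$-instance while preserving both full $(\alpha',I)$-indiscernibility and the finite-satisfiability refinement. This is where the compactness argument, the strong homogeneity of $\mathbb{U}$ and the global-coheir trick enter essentially, and ensuring the ``moreover'' clause threads coherently through every stage of the recursion is the most delicate bookkeeping.
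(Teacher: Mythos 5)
Your overall plan mirrors the paper's: induct on $j<\mu$, fix a template $\eta^*$, enumerate (small set, non-algebraic type) pairs by $k$, transport data via the canonical order-isomorphisms, and then realize everything while preserving $(\alpha',I)$-indiscernibility. Parts (3a)--(3c) are essentially identical to the paper's argument. However, there are two genuine gaps in (1)--(2).

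First, you write ``for each $k$ extend $p_k$ to a global coheir $p_k^\ast$ over $A_k$.'' A global coheir over $A_k$ (i.e.\ a global type finitely satisfiable in $A_k$) extending $p_k$ exists precisely when $p_k$ is itself finitely satisfiable in $A_k$, which an arbitrary non-algebraic $p_k\in S(A_k)$ need not be. Yet Proposition~\ref{P:existence-of-indisc}(2) must produce a realization for \emph{every} non-algebraic $p$, with the ``moreover'' refinement only in the finitely satisfiable case. The paper therefore splits into two cases: when $p$ is finitely satisfiable in $A$, it takes the (unique) ultrafilter concentrating on the $p$-definable subsets of $A$; otherwise it takes \emph{any} non-algebraic extension of $p$ to the model $A_{j,\eta^*}$, which is automatically finitely satisfiable in the model, and then an ultrafilter over the model. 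Without this case split your construction does not even get off the ground for non-coheir types, and the ``moreover'' tracking is conflated with the existence claim.

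Second, and more seriously, the step where you ``realize in $\mathbb{U}$ the joint condition \ldots\ and the full family satisfies $(\alpha'\restriction U_{j+1},I)$-indiscernibility over the earlier layers'' is exactly the crux, and your appeal to compactness plus strong homogeneity does not supply an argument. Indiscernibility is a constraint on the \emph{joint} type of $a_{j,k,\eta_0},\dots,a_{j,k,\eta_{n-1}}$ for all finite tuples of $\eta$'s, and automorphisms carrying $\eta_0$ to $\eta_1$ move the already-chosen witness for $\eta_0$; so ``lifting to automorphisms'' does not by itself produce a \emph{coherent} finite system satisfying both the type-realization and the indiscernibility clauses. The paper resolves this by realizing $a_{j,k,\eta}$ as a coheir/Morley sequence \emph{along the lexicographic order} on $(I^{\underline{\alpha_j}})_<$, with each $a_{j,k,\eta}$ realizing $p_{\mathcal{D}_{j,k,\eta}}\restriction B_{j,k,\eta}$ over the base that includes all earlier $a_{j,k,\rho}$ with $\rho<^{\text{lex}}\eta$, and then proving indiscernibility by an induction that tracks ultrafilter products $p_{\mathcal{D}_{j,k,\eta_0}}\otimes\cdots\otimes p_{\mathcal{D}_{j,k,\eta_{n-1}}}$ together with the compatibility claim $\widehat\pi(\mathcal{D}_{j,k,\rho})=\mathcal{D}_{j,k,\pi(\rho)}$. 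This ordered coheir-sequence construction is the missing device that makes the compactness argument go through; without it, your ``joint condition'' is not demonstrably finitely satisfiable. You flag this as ``the chief obstacle,'' but identifying the obstacle is not the same as overcoming it.
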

\begin{proof}
Since $\mathbb{U}$ has definable Skolem functions, for any $A\subseteq \mathbb{U}$, $\dcl(A)$ is an elementary substructure of $\mathbb{U}$.

Let $j<\mu$ and assume we found $\{U_{i}\subseteq \mu:i<j\}$ and $a_{<j}=\langle a_{i,k,\eta}:i<j,\, k\in U_i,\, \eta\in (I^{\U{\alpha_i}})_<\rangle $ such that $a_{<j}$ is $((\alpha^\prime)^{<j}, I)$-indiscernible, where $((\alpha^\prime)^{<j})_{i,k}=\alpha_i$ for $i<j$ and $k\in U_i$.

If $(I^{\U{\alpha_j}})_<=\emptyset$ then there is nothing to do. Otherwise, fix some $\eta^*\in (I^{\U{\alpha_j}})_<$ and a $\dcl$-closed subset 
\[A_{j,\eta^*}\subseteq C_j=\dcl(\{a_{i,k,\nu}:i<j,\, k\in U_i,\, \nu\in (I^{\U{\alpha_i}})_<\})\] of cardinality at most $\mu$.


For any $A\subseteq A_{j,\eta^*}$ with $|A|<\kappa$ and a non-algebraic type $p\in S_1(A)$ we choose an extension $\tilde p$ of $p$ to $A_{j,\eta^*}$ and a non-principal ultrafilter $\mathcal{D}(p)$ on $A_{j,\eta^*}$, such that $\tilde p = p_{\mathcal{D}(p)}|A_{j,\eta^*}$, in the following way
\begin{list}{•}{}
\item if $p$ is finitely satisfiable in $A$ then let $\mathcal{D}(p)$  be any (necessarily unique) ultrafilter (on $A_{j,\eta^*}$) extending the filter $\{B\subseteq A_{j,\eta^*}:\varphi(A,a)\subseteq B,\, \varphi(x,a)\in p\}$. We let $\tilde p=p_{\mathcal{D}(p)}|A_{j,\eta^*}$;
\item otherwise, let $\tilde p$ be any non-algebraic extension of $p$ to $A_{j,\eta^*}$. Since $A_{j,\eta^*}$ is a model, $\tilde p$ is finitely satisfiable in $A_{j,\eta^*}$. Let $\mathcal{D}(p)$ be any non-principal ultrafilter extending $\{\varphi(A_{j,\eta^*},b):\varphi(x,b)\in \tilde p\}$.
\end{list}

We note that there are at most $\mu^{<\kappa}\leq\mu$ subsets $A\subseteq A_{j,\eta^*}$ with $|A|<\kappa$ and for all such $A$ there are at most $2^{\kappa+|\mathcal{L}|}\leq \mu$ types on $A$.

Let $U_j\subseteq \mu$ be such that $\langle (p_{j,k,\eta^*},\mathcal{D}_{j,k,\eta^*}):k\in U_j\rangle$ enumerates the set of pairs $(\tilde p,\mathcal{D}(p))$ for non-algebraic $p\in S_1(A)$.
%

By the induction hypothesis, any partial order-isomorphism $\pi$ of $I$ induces a partial elementary map $\widehat \pi$ whose domain is 
\[\dcl\left( \{a_{i,k,\nu}:i<j,\, k\in U_i,\, \Rg(\nu)\subseteq \dom(\pi)\}\right),\]
mapping $a_{i,k,\nu}\mapsto a_{i,k,\pi(\nu)}$, where $\pi(\nu)=\pi\circ \nu$. Note that for any $\pi_1,\pi_2$, if $\pi_1\circ \pi_2$ makes sense then $\widehat{\pi_1\circ \pi_2}=\widehat{\pi_1}\circ \widehat{\pi_2}$.

As a result, for any $\rho\in (I^{\U{\alpha_j}})_<$ the unique order-isomorphism $\pi_{\eta^*,\rho}:\eta^*\to \rho$ induces a partial elementary map, $\widehat{\pi_{\eta^*,\rho}}: A_{j,\eta^*}\to A_{j,\rho}$, where $A_{j,\rho}=\widehat\pi(A_{j,\eta^*})$, which is given by $a_{i,k,\nu}\mapsto a_{i,k,\pi_{\eta^*,\rho}(\nu)}$. For every $k\in U_j$ let $p_{j,k,\rho}=\widehat{\pi_{\eta^*,\rho}}(p_{j,k,\eta^*})\in S(A_{j,\rho})$ and let $\mathcal{D}_{j,k,\rho}=\widehat{\pi_{\eta^*,\rho}}(\mathcal{D}_{j,k,\eta^*})$. 
\begin{claim}
For any $\rho\in (I^{\U{\alpha_j}})_<$ and $\pi$ a partial isomorphism of $I$ whose domain contains  $\Rg(\rho)$, $\widehat{\pi}(A_{j,\rho})=A_{j,\pi(\rho)}$, $\widehat{\pi}(p_{j,k,\rho})=p_{j,k,\pi(\rho)}$ and $\widehat{\pi}(\mathcal{D}_{j,k,\rho})=\mathcal{D}_{j,k,\pi(\rho)}$.
\end{claim}
\begin{claimproof}
There is no harm in restricting $\pi$ to $\Rg(\rho)$.  Let $\pi_{\eta^*,\rho}:\eta^*\to \rho$ be the unique order-isomorphism, so $\pi\circ\pi_{\eta^*,\rho}$ is the unique isomorphism from $\eta^*$ to $\pi(\rho)$ and thus equal to $\pi_{\eta^*,\pi(\rho)}$. Hence $\pi=\pi_{\eta^*,\pi(\rho)}\circ \pi_{\eta^*,\rho}^{-1}$. The result follows.
\end{claimproof}

Let $((I^{\U{\alpha_j}})_<,<^{\text{lex}})$ be the lexicographic ordering and let $(U_j,<)$ be the order induced from $\mu$. By induction on $k\in U_j$, by compactness we may find a sequence $\langle a_{j,k,\eta}: \eta\in (I^{\U{\alpha_j}})_<\rangle$ satisfying that for any $\eta\in (I^{\U{\alpha_j}})_<$
\[a_{j,k,\eta}\models p_{\mathcal{D}_{j,k,\eta}}|B_{j,k,\eta},\]
where
\[B_{j,k,\eta}=C_j\cup \{a_{j,l,\rho}: l<k,\, l\in U_j,\rho\in (I^{\U{\alpha_j}})_<\}\cup \{a_{j,k,\rho}:\rho<^{\text{lex}}\eta\}\]

We show $((\alpha^\prime)^{\leq j},I)$-indiscernibility by induction on $\{(i,l):i\leq j,\, l\in U_i\}$ (with the lexicographic ordering). In other words, we assume that for any $\langle (i_r,l_r): r<n\rangle$, with $(i_r,l_r)<(j,k)$ and $l_r\in U_r$, $\langle \eta_r\in (I^{\U{\alpha_{i_r}}})_<:r<n\rangle$ and a partial isomorphism $\pi$ of $I$, whose domain contains $\bigcup \{\Rg(\eta_r):r<n\}$,
\[\tp(a_{i_0,l_0,\eta_0},\dots,a_{i_{n-1},l_{n-1},\eta_{n-1}})=\tp(a_{i_0,l_0,\pi(\eta_0)},\dots,a_{i_{n-1},l_{n-1},\pi(\eta_{n-1})}).\]
We wish to show the same statement for $(i_r,l_r)\leq (j,k)$.

We prove by induction on $n$ that for any $\bar b\subseteq \{a_{i,l,\eta}: (i,l)<(j,k),\,\eta\in (I^{\U{\alpha_i}})_<,\, l\in U_{i}\}$, any $\eta_{n-1}<^{\text{lex}}\dots<^{\text{lex}}\eta_0\in (I^{\U{\alpha_j}})_<$ and any partial isomorphism $\pi$ of $(I,<)$ whose domain contains 
\[\Rg(\eta_0)\cup\dots\cup\Rg(\eta_{n-1})\cup\bigcup\{\Rg(\eta):a_{i,l,\eta}\in \bar b\},\]
\[\tp(a_{j,k,\eta_0},\dots,a_{j,k,\eta_{n-1}}, \bar b)=\tp(a_{j,k,\pi(\eta_0)},\dots,a_{j,k,\pi(\eta_{n-1})},\widehat{\pi}(\bar b)).\]

Let $\varphi(x_0,\dots,x_{n-1},\bar b)$ be some formula, where $\bar b$ is as above.
We show that \[\varphi(x_0,\dots,x_{n-1},\bar b)\in p_{\mathcal{D}_{j,k,\eta_0}}\otimes\dots\otimes p_{\mathcal{D}_{j,k,\eta_{n-1}}}\iff\]\[\varphi(x_0,\dots,x_{n-1},\pi(\bar b))\in p_{\mathcal{D}_{j,k,\pi(\eta_0)}}\otimes\dots\otimes p_{\mathcal{D}_{j,k,\pi(\eta_{n-1})}}.\]
Indeed, if $\varphi(x_0,\dots,x_{n-1},\bar b)\in p_{\mathcal{D}_{j,k,\eta_0}}\otimes\dots\otimes p_{\mathcal{D}_{j,k,\eta_{n-1}}}$ then by the choice of the $a_{j,k,\eta}$'s,  $\varphi(a_{j,k,\eta_0},\dots,a_{j,k,\eta_{n-1}},\bar b)$ holds and thus $X=\varphi(A_{j,\eta_0},a_{j,k,\eta_1},\dots,a_{j,k,\eta_{n-1}},\bar b)\in \mathcal{D}_{j,k,\eta_0}$. By the Claim, $\widehat\pi(X)\in \mathcal{D}_{j,k,\pi(\eta_0)}$. By the induction hypothesis (on $n$), $\widehat\pi$ is elementary on $a_{j,k,\eta_1}\cup \dots\cup a_{j,k,\eta_{n-1}}\cup \bar b$ and as a result, \[\widehat\pi(X)=\varphi(A_{j,\pi(\eta_0)},a_{j,k,\pi(\eta_1)},\dots,a_{j,k,\pi(\eta_{n-1})},\widehat \pi(\bar b))\in \mathcal{D}_{j,k,\pi(\eta_0)},\]
 and as $\pi$ preserves $<^{\text{lex}}$,
\[\varphi(x_0,a_{j,k,\pi(\eta_1)},\dots,a_{j,k,\pi(\eta_{n-1})},\widehat \pi(\bar b))\in p_{\mathcal{D}_{j,k,\pi(\eta_0)}}|B_{j,k,\pi(\eta_0)}.\] 
As a result, by the choice of the $a_{j,k,\eta}$'s,
\[\varphi(a_{j,k,\pi(\eta_0)},a_{j,k,\pi(\eta_1)},\dots,a_{j,k,\pi(\eta_{n-1})},\widehat \pi(\bar b)) \text{ holds,}\] and thus 
 \[\varphi(x_0,\dots,x_{n-1},\widehat\pi(\bar b))\in p_{\mathcal{D}_{j,k,\pi(\eta_0)}}\otimes\dots\otimes p_{\mathcal{D}_{j,k,\pi(\eta_{n-1})}}.\]
This proves $(1)$, i.e. $a=\langle a_{i,k,\eta}: i<\mu,\, k\in U_i,\, \eta\in (I^{\U{\alpha_i}})_<\rangle$ is $(\alpha^\prime,I)$-indiscernible.

To prove $(2)$ and $(3)$ recalling the beginning of the proof of $(1)$, let \[A_{j,\eta}=F_{j,\eta}=\dcl (\{a_{i,k,\nu}: i<j, k\in U_{i}, \nu\in (\Rg(\eta)^{\U{\alpha_i}})_<\}).\]
$(2)$ follows immediately once we observe the following:
\begin{list}{•}{}
\item $|A_{j,\eta}|\leq \mu$. This follows from the following inequalities \[\mu\cdot {|\alpha_j|^{<\kappa}}\leq \mu\cdot \kappa^{<\kappa}\leq \mu.\] 
\item For any order-preserving partial isomorphism $\pi$ of $I$, whose domain contains $\Rg(\eta)$, by the induction hypothesis on $j$, $\widehat\pi(A_{j,\eta})=A_{j,\pi(\eta)}.$ 
\end{list}

Now assume that $(I,<)$ is well ordered, that $\alpha$ satisfies $\alpha_i=(i\mod \kappa)$ and let $A\subseteq a$ with $|A|<\kappa$. Since $(I,<)$ is well-ordered there exist an ordinal $\beta$ and an order isomorphism, $\eta:\beta\to \bigcup_{a_{j,l,\nu}\in A} \Rg(\nu)$. Since $\kappa$ is a regular cardinal and for every $a_{j,l,\nu}\in A$ we have $|\Rg(\nu)|=|\alpha_j|<\kappa$, it follows that $\beta<\kappa$.

Since $\kappa<2^\kappa\leq\mu$ and $\mu^{<\kappa}=\mu$ (so $\cf(\mu)\geq \kappa$), $\widehat j=\sup_{a_{j,l,\nu}\in A} j<\mu$. Let $i=\widehat j\cdot \kappa+\beta<\mu$. By the choice of $\alpha$, $\alpha_i=\beta$ and $\eta\in (I^{\U{\alpha_i}})_<$. This implies that $A\subseteq F_{i,\eta}$. This gives $(3.a)$. Now $(3.b)$ follows by $(2)$. 

Item $(3.c)$ follows from the construction, we elaborate. Let $A\subseteq B$ with $|B|<\kappa$. By $(3.a)$ there exists $j<\mu$ and $\eta\in (I^{\U{\alpha_j}})_<$ such that $B\subseteq F_{j,\eta}$. Let $p\in S^{f.s.}(A)$ a non-algebraic type which is finitely satisfiable in $A$. As $\cf(I)\geq\kappa$, there is some $\Rg(\eta)<\gamma^*\in I$ and by the Claim above for any $\Rg(\eta)<\gamma\in I$,
\[F_{j+1,\eta^\frown\gamma}=\pi_{\eta^\frown \gamma^*,\eta^\frown \gamma}(F_{j+1,\eta^\frown\gamma^*}).\]
Let $\mathcal{D}$ be the non-principle ultrafilter on $F_{j+1,\eta^\frown\gamma^*}$ corresponding to $p$, as chosen above. Hence $p_{\mathcal{D}}$ is finitely satisfiable in $A$.
Observe that since $A\subseteq F_{j,\eta}$, $\pi_{\eta^\frown\gamma^*,\eta^\frown\gamma}$ fixes $A$ point-wise. It follows by Remark \ref{R:global-ultrafilter} that for every $\Rg(\eta)<\gamma\in I$, $p_{\mathcal{D}}=p_{\mathcal{D}_{j+1,k,\eta^\frown\gamma}}$. 

Let $k\in U_j$ be such that $\tp(a_{j+1,k,\eta^\frown\gamma^*}/F_{j+1,\eta^\frown\gamma^*})=p_{\mathcal{D}}|F_{j+1,\eta^\frown\gamma^*}$.  By the choice of elements, for any $\Rg(\eta)<\gamma\in I$,
\[a_{j+1,k,\eta^\frown\gamma}\models p_{\mathcal{D}}|F_{j,\eta}\langle a_{j+1,k,\eta^\frown\delta}: \Rg(\eta)<\delta<\gamma\rangle.\] We end by noting that since we are assuming that $(I,<)$ is a cardinal and $\cf(I)\geq \kappa>\alpha_j$, $|\{\gamma:\Rg(\eta)<\gamma\in I\}|=|I|$.
%
%
\end{proof}

In stable theories, for any infinite indiscernible sequence $I$ over some set $A$ one may take the limit type defined by 
\[\lim(I)=\{\varphi(x,c): \text{$\varphi(a,c)$ holds for cofinitely many $a\in I$}\}.\]
It is a consistent complete type by stability. It is obviously finitely satisfiable in $I$. Moreover, if $\mathcal{D}$ is a non-principal ultrafilter on $I$, then $p_\mathcal{D}=\lim(I)$. We often write $\lim(I/A)=\lim(I)|A$.

The following is \cite[Lemma III.3.10]{classification}, we give a proof for completeness.
\begin{lemma}\label{L:saturation}
Let $T$ be a stable theory and $M\models T$. If $M$ is $(\kappa(T)+\aleph_1)$-saturated and every countable indiscernible sequence over $A\subseteq M$, with $|A|<\kappa(T)$, in $M$ can be extended to one of cardinality $\lambda$ then $M$ is $\lambda$-saturated.
\end{lemma}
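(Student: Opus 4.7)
The strategy is the standard stability-theoretic trick of realizing a type as the limit of a Morley sequence. Given a type $p\in S(B)$ with $|B|<\lambda$, I want to realize $p$ in $M$. If $|B|<\kappa(T)$ this is immediate from $(\kappa(T)+\aleph_1)$-saturation, so assume $\kappa(T)\leq |B|<\lambda$.

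By stability (definition of $\kappa(T)$), there is $A\subseteq B$ with $|A|<\kappa(T)$ such that $p$ does not fork over $A$. First I would build, inside $M$, a countable Morley sequence $I=\langle a_n:n<\omega\rangle$ for $p\restriction A$ over $A$: inductively, at stage $n$ I realize the (unique) non-forking extension of $p\restriction A$ to $A\cup\{a_0,\dots,a_{n-1}\}$, which is a type over a set of cardinality $<\kappa(T)+\aleph_0\leq \kappa(T)+\aleph_1$, hence realized in $M$ by hypothesis. Then I invoke the hypothesis of the lemma to extend $I$ to an indiscernible sequence $I'$ over $A$ of cardinality $\lambda$ inside $M$.

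Next I would observe that $I'$ is itself a Morley sequence in $p\restriction A$ over $A$: in a stable theory, the $A$-type of an indiscernible sequence is determined by the $A$-type of any of its infinite sub-sequences, so $I'$ is $A$-indiscernible and each element satisfies $\tp(a_0/A)=p\restriction A$ and is non-forking over $A$ over the preceding ones. By stability, the average/limit type $\lim(I'/B)$ is then the unique non-forking extension of $p\restriction A$ to $B$, which is $p$.

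Finally I would pick a realization of $p$ from $I'$ by a cardinality count: for each $\varphi(x,b)\in p$, only finitely many elements of $I'$ fail $\varphi(x,b)$ (by definition of the limit type), so the set $E$ of elements of $I'$ not realizing $p$ has size at most $|p|\cdot\aleph_0\leq |B|+|T|<\lambda$, where the last inequality uses $|B|<\lambda$ together with the fact that the lemma is applied in a regime where $\lambda>|T|$ (as in the envisioned use in Theorem \ref{T:existence of gen em model in stable}). Thus $|I'\setminus E|=\lambda$, and any element of $I'\setminus E$ realizes $p$ inside $M$. The main delicate point is verifying that the extended sequence $I'$ is still a Morley sequence for $p\restriction A$; everything else is bookkeeping with cardinals and the basic properties of forking and limit types in stable theories.
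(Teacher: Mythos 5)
Your overall strategy — build a countable Morley sequence inside $M$, stretch it to length $\lambda$ using the hypothesis, then read $p$ off a tail — is exactly the paper's strategy (which follows \cite[Lemma III.3.10]{classification}). But there is a real gap at the pivot of the argument, namely the repeated invocation of ``\emph{the} unique non-forking extension of $p\restriction A$.'' That object does not exist in general: $p\restriction A$ is only guaranteed to be stationary over $\acl^{eq}(A)$--closed sets or models, and neither $A$ nor $A\cup\{a_0,\dots,a_{n-1}\}$ need be of that form. In the inductive construction of $I$ you therefore make an unconstrained choice of non-forking extension at each step, and the resulting $I$ may be a Morley sequence for a conjugate of $p$ (agreeing with $p$ over $A$ but diverging over $\acl^{eq}(A)$, hence over $B$). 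Extending $I$ to an $A$-indiscernible $I'$ cannot repair this: as you correctly observe, the limit type is locked in already by $I$, so $\lim(I'/B)$ will be whichever non-forking extension of $p\restriction A$ your $I$ happened to aim at, and there is no argument that it equals $p$.

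The paper closes exactly this gap, and the mechanism is the extra step you omitted. One first fixes a single global non-forking extension $q\supseteq p$, builds a countable sequence $S=\langle b_i:i<\omega\rangle\subseteq M$ with $b_i\models q|B\langle b_j:j<i\rangle$ using only $\kappa(T)$-saturation (each step is over a set of size $<\kappa(T)+\aleph_0$), and invokes \cite[Corollary III.2.11]{classification}: after absorbing a countable Morley sequence, the restriction $q|BS$ is \emph{stationary}. Only then does one build the Morley sequence $I$ of $q$ over $SB$ (here $(\kappa(T)+\aleph_1)$-saturation is what pays for steps over a set of size $<\kappa(T)+\aleph_0$), so that $I$ is pinned to $q$ over $\acl(B)$ and any $B$-indiscernible extension $J\supseteq I$ inside $M$ is still a Morley sequence of $q$ over $\acl(B)$, whence $\lim(J/C)=p$. (The paper also passes to $M^{eq}$ at the outset, which is needed to run the stationarity/$\acl$ arguments cleanly.) Your final bookkeeping step — that at most $|p|\cdot\aleph_0\leq|B|+|T|<\lambda$ elements of the long sequence can miss $p$ — is fine and essentially the same cardinality count as the paper's use of \cite[Corollary III.3.5(1)]{classification}, modulo the implicit assumption $\lambda>\kappa(T)$ (resp.\ $\lambda>|T|$), which both arguments need and the paper secures by assuming WLOG $\lambda>\kappa(T)+\aleph_1$.
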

\begin{proof}
We may assume that $\lambda>\kappa(T)+\aleph_1$.
By passing to $M^{eq}$ (and $T^{eq}$) there is no harm in assuming that $T$ eliminates imaginaries. 
Let $p\in S(C)$ with $C\subseteq  M$ and $|C|<\lambda$. Let $B\subseteq C$ with $|B|<\kappa(T)$ such that $p$ does not fork over $B$. Let $q\supseteq p$ be its non-forking global extension. Since $M$ is $\kappa(T)$-saturated, we may find a sequence of elements $S=\langle b_i :i<\omega \rangle\subseteq M$ satisfying $b_i\models q|B\langle b_j:j<i\rangle$. Note that $q|BS$ is stationary by \cite[Corollary III.2.11]{classification}. 

Since $M$ is $(\kappa(T)+\aleph_1)$-saturated, we may find a Morley sequence $I=\langle a_i:i<\omega\rangle$ of $q$ over $SB$, i.e. $a_i\models q|SBa_{<i}$ and $a_i\in M$. It follows that $I$ is also a Morley sequence of $q$ over $\acl(B)$\footnote{It is standard to see that $I$ is independent and indiscernible over $\acl(B)$. On the other hand, since $q|BS$ is stationary, it isolates a complete type over $\acl(BS)$.}. Let $I\subseteq J\subseteq M$ be an indiscernible sequence (over $B$) of cardinality $\lambda$. As a result, $J$ is also a Morley sequence of $q$ over $\acl(B)$.

By \cite[Lemma III.1.10(2)]{classification}, $\lim(J/M)=q|M$ and in particular $\lim(J/C)=p$. By \cite[Corollary III.3.5(1)]{classification}, there is $J_0\subseteq J$ with $J\setminus J_0$ indiscernible over $C$ and $|J_0|\leq \kappa(T)+|C|<\lambda$. In particular, $|J\setminus J_0|\geq \aleph_0$ and thus for every $c\in J\setminus J_0$, $p=\tp(c/C)$.
%
\end{proof}

\begin{definition}
Let $T$ be a theory. We say that $M\models T$ is an \emph{infinitary EM-model based on $(\alpha,I)$} if $M=\dcl(a)$, where $a$ is an $(\alpha,I)$-indiscernible sequence for $(\alpha,I)$ as in Definition \ref{D:alpha-I}.
\end{definition}

\begin{lemma}\label{L:underlying set of Inf-EM is Inf-EM}
Let $T$ be a any theory.
Let $\kappa\geq \aleph_0$ be a cardinal, $(I,<)$ a linearly ordered set, $\alpha\in \kappa^U$, where $U$ is a set. If $a=\langle a_{i,\eta}:i\in U,\, \eta\in (I^{\U{\alpha_i}})_<\rangle$ is an $(\alpha,I)$ indiscernible sequence, in some model $M\models T$, then there exists some set $\widehat U$, with $|\widehat{U}|\leq  |T|\cdot |U|\cdot \kappa^{<\kappa}$, and $\widehat{\alpha}\in \kappa^{\widehat{U}}$ and an $(\widehat{\alpha},I)$-indiscernible sequence $b$ whose underlying set is $\dcl(a)$.
\end{lemma}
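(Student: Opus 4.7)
The plan is to reindex $\dcl(a)$ by ``templates'' that record a defining formula together with combinatorial data describing how the $a$-indices it uses are interleaved.

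Every $c \in \dcl(a)$ has the form $\varphi(M, a_{i_1,\eta_1}, \ldots, a_{i_k,\eta_k})$ for some $\mathcal{L}$-formula $\varphi(x, y_1, \ldots, y_k)$ defining a singleton, some $k < \omega$, indices $i_1, \ldots, i_k \in U$ and tuples $\eta_j \in (I^{\U{\alpha_{i_j}}})_<$. Because $\kappa$ is infinite and each $|\Rg(\eta_j)| = \alpha_{i_j} < \kappa$, and because a finite union of well-ordered subsets of a linear order is well-ordered, the subset $W = \Rg(\eta_1) \cup \cdots \cup \Rg(\eta_k) \subseteq I$ has well-ordered order type $\beta < \kappa$. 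Enumerating $W$ increasingly as $\nu \colon \beta \to I$, each $\eta_j$ factors uniquely as $\nu \circ \sigma_j$ for an order-preserving injection $\sigma_j \colon \alpha_{i_j} \to \beta$.

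I would then define a \emph{template} to be a tuple $\tau = (\varphi, k, \beta_\tau, (i_1, \sigma_1), \ldots, (i_k, \sigma_k))$, where $\varphi$, $k$, $\beta_\tau$, the $i_j$'s and the order-preserving injections $\sigma_j \colon \alpha_{i_j} \to \beta_\tau$ are as above. Let $\widehat U$ consist of exactly those templates $\tau$ for which $\varphi$ defines a singleton on $(a_{i_1, \nu \circ \sigma_1}, \ldots, a_{i_k, \nu \circ \sigma_k})$ for some (equivalently, by $(\alpha,I)$-indiscernibility of $a$, every) $\nu \in (I^{\U{\beta_\tau}})_<$. Set $\widehat \alpha_\tau = \beta_\tau$ and let $b_{\tau, \nu}$ denote that unique element. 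By the analysis above, the underlying set of $b = \langle b_{\tau, \nu}\rangle$ is exactly $\dcl(a)$.

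The cardinality bound is a direct count: $\varphi$ contributes $|T|$ choices, $k$ is finite, the tuple $(i_1, \ldots, i_k) \in U^{<\omega}$ contributes $|U|$, the ordinal $\beta_\tau < \kappa$ contributes $\kappa$, and the finite tuple $(\sigma_1, \ldots, \sigma_k)$ of injections into $\beta_\tau$ contributes $\kappa^{<\kappa}$; multiplied, this gives $|\widehat U| \leq |T| \cdot |U| \cdot \kappa^{<\kappa}$. Finally, $(\widehat \alpha, I)$-indiscernibility of $b$ reduces directly to $(\alpha, I)$-indiscernibility of $a$: given index sequences $\langle (\tau_r, \nu_r) : r < n \rangle$ and $\langle (\tau_r, \rho_r) : r < n \rangle$ and a partial isomorphism $\pi$ of $I$ mapping each $\nu_r$ to $\rho_r$, each $b_{\tau_r, \nu_r}$ is the value of a fixed formula on $a$-coordinates $a_{i, \nu_r \circ \sigma}$, which $\pi$ sends to $a_{i, \rho_r \circ \sigma}$; by $(\alpha,I)$-indiscernibility of $a$ the concatenated $a$-tuples have the same type, whence so do $\langle b_{\tau_r, \nu_r}\rangle_r$ and $\langle b_{\tau_r, \rho_r}\rangle_r$. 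The only genuine obstacle is the bookkeeping of the canonical factorization $\eta_j = \nu \circ \sigma_j$ and the cardinality count; once the template machinery is in place, the indiscernibility of $b$ is immediate.
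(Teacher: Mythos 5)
Your proof is correct and follows essentially the same route as the paper: you index $\dcl(a)$ by a ``template'' consisting of a defining formula together with the $U$-indices and the combinatorial data recording how each $\eta_j$ embeds into the increasing enumeration of their union, namely the order-preserving injections $\sigma_j$; the paper does exactly this, encoding each $\sigma_j$ by its image $p_j\subseteq\kappa$ and the canonical isomorphism $\varphi_{p_j}:\otp(p_j)\to p_j$. The cardinality count and the transfer of $(\widehat\alpha,I)$-indiscernibility from $(\alpha,I)$-indiscernibility are identical.
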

\begin{proof}
%
%
For any $p\subseteq \kappa$ let $\varphi_p:\otp(p)\to p$ be the unique order isomorphism. Let $\mathcal{F}$ be the collection of all $\emptyset$-definable functions. We consider the family $\widehat U$ of tuples  $(f(\bar v), s_0,p_0,\dots,s_{|\bar v|-1},p_{|\bar v|-1})$ satisfying
\begin{list}{•}{}
\item $f(\bar v)\in \mathcal{F}$, 
\item $s_0,\dots, s_{|\bar v|-1}\in U$,
\item for any $i<|\bar v|$, $p_i\subseteq \kappa$ with $\otp(p_i)=\alpha_{s_i}$,
\item $\bigcup_{i<|\bar v|}p_i\in \text{Ord}$.
\end{list}
We note that $|\widehat U|\leq |T|\cdot |U|^{<\aleph_0}\cdot (\kappa^{<\kappa})^{<\aleph_0}\leq |T|\cdot |U|\cdot \kappa^{<\kappa}.$
Let $\widehat{\alpha}\in \kappa^{\widehat U}$ be the function mapping $x=(f(\bar v), s_0,p_0,\dots,s_{|\bar v|-1},p_{|\bar v|-1})$ to $\bigcup_{i<|\bar v|} p_i<\kappa$.  For any $x=(f(\bar v), s_0,p_0,\dots,s_{|\bar v|-1},p_{|\bar v|-1})\in\widehat{U}$ and $\eta\in (I^{\U{\widehat{\alpha}_x}})_<$ set \[b_{x,\eta}=f(a_{s_0,\eta\restriction p_0\circ \varphi_{p_0}},\dots,a_{s_{|\bar v|-1},\eta\restriction p_{|\bar v|-1}\circ \varphi_{p_{|\bar v|-1}}}).\] Note that for any $j<|\bar v|$, $(\eta\restriction p_j)\circ \varphi_{p_j}\in (I^{\U{\alpha_{s_j}}})_<$.

Let $b=\langle b_{x,\eta}: x\in \widehat{U},\, \eta\in (I^{\U{\widehat{\alpha}_x}})_<\rangle$. We  will show that $b$ is $(\widehat \alpha,I)$-indiscernible.

Let $\langle x_j\in \widehat{U}: j<k\rangle$, $\langle \eta_j\in (I^{\U{\widehat{\alpha}_{x_j}}})_<:j<k\rangle$ and $\pi$ be a partial isomorphism of $(I,<)$ whose domain contains $\bigcup_{j<k}\Rg(\eta_j)$. For $j<k$, we write $b_{x_j,\eta_j}=f_j(a_{s_{j,0},\eta_j\restriction p_{j,0}\circ \varphi_{p_{j,0}}},\dots,a_{s_{j,|\bar v_j|-1},\eta_j\restriction p_{j,|\bar v_j|-1}\circ \varphi_{p_{j,|\bar v_j|-1}}})$.

Since $a$ is $(\alpha,U)$-indiscernible, the type of 
\[\langle a_{s_{j,0},\eta_j\restriction p_{j,0}\circ \varphi_{p_{j,0}}},\dots,a_{s_{j,|\bar v|-1},\eta_j\restriction p_{j,|\bar v_j|-1}\circ \varphi_{p_{j,|\bar v_j|-1}}}:j<k\rangle\] is equal to the type of
\[\langle a_{s_{j,0},\pi(\eta_j\restriction p_{j,0}\circ \varphi_{p_{j,0}})},\dots,a_{s_{j,\bar v|-1},\pi(\eta_j\restriction p_{j,|\bar v_j|-1}\circ \varphi_{p_{j,|\bar v_j|-1}})}:j<k\rangle,\]
and consequently the type of $\langle b_{x_j,\eta_j}:j<k\rangle$ is the equal to the type of $\langle b_{x_j,\pi(\eta_j)}:j<k\rangle$.

Finally, let $c\in \dcl(a)$. I.e. there is a definable function $f(\bar v)\in \mathcal{F}$, and $a_{i_0,\eta_0},\dots a_{i_{|\bar v|-1},\eta_{|\bar v|-1}}\in a$ such that $c= f(a_{i_0,\eta_0},\dots a_{i_{|\bar v|-1},\eta_{|\bar v|-1}})$. Let $r=\bigcup_{i<|\bar v|}\Rg(\eta_{i})$ and $\psi:r\to \otp(r)$ be the unique order isomorphism. For any $j<|\bar v|$ set $p_j=\psi(\Rg(\eta_j))$. Now note that for $x=(f(\bar v),i_0,p_0,\dots,i_{|\bar v|-1},p_{|\bar v|-1})$. So for $\eta=\psi^{-1}$, $c=b_{x,\eta}$ (because e.g. $\eta\restriction p_0\circ \varphi_{p_0}=\psi^{-1}\restriction \psi(\Rg(\eta_0))\circ \varphi_{\psi(\Rg(\eta_0))}=\psi^{-1}\restriction \psi(\Rg(\eta_0))\circ \psi\circ \eta_0=\eta_0$).
\end{proof}

\begin{theorem}\label{T:existence of gen em model in stable}
The following are equivalent for a complete $\mathcal{L}$-theory $T$:
\begin{enumerate}
\item $T$ is stable.

\item Let $\kappa,\mu$ and $\lambda$ be cardinals satisfying $\kappa=\cf(\kappa)\geq \kappa(T)+\aleph_1$, $\mu^{<\kappa}=\mu\geq 2^{\kappa+|T|}$ and $\lambda=\lambda^{<\kappa}\geq \mu$ and let $T\subseteq T^{sk}$ be an expansion with definable Skolem functions such that $|T|=|T^{sk}|$ in a language $\mathcal{L}\subseteq \mathcal{L}^{sk}$. Then there exists an infinitary EM-model  $M^{sk}\models T^{sk}$ based on $(\alpha,\lambda)$, where $\alpha\in \kappa^U$ for some set $U$ of cardinality at most $\mu$, such that $M=M^{sk}\restriction \mathcal{L}$ is saturated of cardinality $\lambda$.

\item Let $\kappa,\mu$ and $\lambda$ be cardinals satisfying $\kappa=\cf(\kappa)\geq \kappa(T)+\aleph_1$, $\mu^{<\kappa}=\mu\geq 2^{\kappa+|T|}$ and $\lambda=\lambda^{<\kappa}\geq \mu$. Then there exists a saturated model of cardinality $\lambda$.

\end{enumerate}

\end{theorem}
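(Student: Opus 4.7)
The plan is to prove the cycle $(1) \Rightarrow (2) \Rightarrow (3) \Rightarrow (1)$. The implication $(2) \Rightarrow (3)$ is immediate, since the reduct $M$ already provides a saturated model of $T$ of cardinality $\lambda$. For $(3) \Rightarrow (1)$, I would invoke the classical characterization of when unstable theories admit saturated models (see \cite[Ch.~VIII]{classification}): if $T$ were unstable, then a saturated model of cardinality $\lambda$ would force $\lambda = \lambda^{<\lambda}$, and one can choose $\lambda$ satisfying the hypotheses of $(3)$ while violating $\lambda = \lambda^{<\lambda}$ (for instance, with $\kappa=\aleph_1$, take any $\lambda$ of cofinality $\geq \aleph_1$ with $\lambda^{\aleph_0}=\lambda$ but $\lambda^{<\lambda}>\lambda$, which provably exists in ZFC), giving the desired contradiction.

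The substantial direction is $(1) \Rightarrow (2)$. Given $\kappa,\mu,\lambda$ as stated, I first observe that $\lambda = \lambda^{<\kappa}$ forces $\cf(\lambda)\geq\kappa$ by König's theorem. Working inside a monster $\mathbb{U}^{sk}\models T^{sk}$, I apply Proposition \ref{P:existence-of-indisc} with $I=\lambda$ and $\alpha\in\kappa^\mu$ defined by $\alpha_i = i\bmod\kappa$. This produces $U\subseteq\mu\times\mu$ with $|U|\leq\mu$ and an $(\alpha',\lambda)$-indiscernible sequence $a$, where $\alpha'_{(i,k)}=\alpha_i$, such that $M^{sk}:=\dcl(\Rg(a))$ is an elementary substructure of $\mathbb{U}^{sk}$. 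By construction this is an infinitary EM-model based on $(\alpha',\lambda)$. Cardinality bookkeeping gives
\[
|M^{sk}| \leq |U|\cdot\lambda^{<\kappa}\cdot|T^{sk}| \leq \mu\cdot\lambda\cdot\mu = \lambda,
\]
and clauses (3.a)--(3.b) of the proposition give that $M^{sk}$ is $\kappa$-saturated; hence so is the reduct $M := M^{sk}\restriction\mathcal{L}$.

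To upgrade to $\lambda$-saturation I would apply Lemma \ref{L:saturation}. Fix a countable indiscernible sequence $I_0\subseteq M$ over some $A$ with $|A|<\kappa(T)\leq\kappa$, and set $B = A\cup I_0$, still of size $<\kappa$. Clause (3.c) of Proposition \ref{P:existence-of-indisc}, applied to $I_0\subseteq B$, yields a non-principal ultrafilter $\mathcal{D}$ on $I_0$ together with a sequence $\langle c_k : k<\lambda\rangle\subseteq M^{sk}$ realizing $(p_\mathcal{D})^{\otimes\lambda}|B$. By stability $p_\mathcal{D}=\lim(I_0)$, so $\langle c_k\rangle$ is a Morley sequence of $\lim(I_0)$ over $B$; the concatenation $I_0\,{}^\frown\,\langle c_k\rangle$ is therefore an indiscernible sequence over $A$ of length $\lambda$ inside $M$ extending $I_0$ (a standard stability fact). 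Lemma \ref{L:saturation} then yields $\lambda$-saturation. The reverse inequality $|M|\geq\lambda$ is forced by $\lambda$-saturation applied to any non-algebraic $1$-type, so $|M|=\lambda$ as required.

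The main conceptual work has been packaged into Proposition \ref{P:existence-of-indisc}; clause (3.c) in particular is tailor-made for Lemma \ref{L:saturation}. The only genuinely stability-dependent move left inside the theorem is the identification $p_\mathcal{D}=\lim(I_0)$, which translates the abstract ultrafilter-driven realization into the Morley-sequence language needed to verify the hypothesis of Lemma \ref{L:saturation}; the rest is assembly and cardinal arithmetic under the explicit constraints on $\kappa,\mu,\lambda$.
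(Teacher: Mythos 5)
Your proof follows the paper's route essentially step for step: $(2)\Rightarrow(3)$ is trivial; $(1)\Rightarrow(2)$ applies Proposition~\ref{P:existence-of-indisc} with $I=\lambda$ and $\alpha_i=i\bmod\kappa$, takes $M^{sk}=\dcl(\Rg(a))$, uses clause~(3.b) for $\kappa$-saturation and clause~(3.c) plus the identification $p_\mathcal{D}=\lim(I_0)$ to extend countable indiscernibles to length $\lambda$, then invokes Lemma~\ref{L:saturation}; and $(3)\Rightarrow(1)$ is the ``singular $\lambda$ forces $\lambda$-stability'' trick.

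One small caution in your $(3)\Rightarrow(1)$: the specific example $\kappa=\aleph_1$ only satisfies the hypothesis $\kappa\geq\kappa(T)+\aleph_1$ when $\kappa(T)\leq\aleph_1$, so it does not cover an arbitrary complete $T$. The paper instead fixes a uniform recipe that works for every admissible $\kappa$: it takes $\lambda=\mu=\beth_\kappa(\kappa)$, which is automatically singular of cofinality $\kappa$, and verifies $\lambda^{<\kappa}=\lambda$ directly from the regularity of $\kappa$. Your underlying idea (produce a singular $\lambda$ with $\lambda^{<\kappa}=\lambda$ but $\lambda^{<\lambda}>\lambda$) is the same; you just want to phrase the cardinal arithmetic uniformly in $\kappa$ rather than anchoring it at $\aleph_1$.
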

\begin{remark}
For example, the assumptions in (2) hold for $\lambda=\mu=2^{\kappa+|T|}$ for any $\kappa=\cf(\kappa)\geq \kappa(T)+\aleph_1$. 
\end{remark}
\begin{proof}
$(1)\implies (2)$. 
In the following, the superscript $^{sk}$ means that we work in $T^{sk}$.
 
We apply Proposition \ref{P:existence-of-indisc}(1,3) with $\mathbb{U}$ there being a monster model for $T^{sk}$ and $(I,<)=(\lambda,<)$.  Consequently, there exists an $(\alpha^\prime,\lambda)$-indiscernible sequence $a$, where $\alpha^\prime$ is as in the proposition.
Let $M^{sk}=\dcl^{sk}(a)$ and $M=M^{sk}\restriction \mathcal{L}$. Note that $|M^{sk}|=|M|=\mu\cdot \lambda^{<\kappa}=\lambda$.

Towards applying Lemma \ref{L:saturation}, note that $M$ is indeed $(\kappa(T)+\aleph_1)$-saturated by Proposition \ref{P:existence-of-indisc}(3.b) and the assumption on $\kappa$.  Let $I\subseteq M$ be an infinite countable indiscernible sequence over some $B\subseteq M$ with $|B|<\kappa(T)\leq \kappa$.

%
Since $\lambda<\lambda^{\cf(\lambda)}$, necessarily $\cf(\lambda)\geq \kappa$ so by Proposition \ref{P:existence-of-indisc}(3.c) there is a  non principal ultrafilter $\mathcal{D}$ on $I$ and elements $\langle a_i\in \dcl(\Rg(a)):i<\lambda\rangle$ satisfying that 
\[a_i\models p^{sk}_{\mathcal{D}}|BI\langle a_k: k<i\rangle\] for any $i<\lambda$.
Let $p_{\mathcal{D}}$ be the restriction of $p^{sk}_{\mathcal{D}}$ to $\mathcal{L}$. Thus $p_{\mathcal{D}}=\lim(I)$ and for every $i<\lambda$
\[a_i\models \lim(I)|BI\langle a_k: k<i\rangle.\]
By stability, $I+\langle a_i:i<\lambda\rangle$ is indiscernible over $B$ (see also \cite[Exercise 2.25]{guidetonip} and \cite[Lemma III.1.7(2)]{classification}). By Lemma \ref{L:saturation}, $M$ is saturated.

$(2)\implies (3)$ is obvious.

$(3)\implies (1)$. Let $\kappa$ be any cardinal satisfying $\kappa=\cf(\kappa)\geq \kappa(T)+\aleph_1$ and let $\lambda = \mu = \beth_\kappa(\kappa)$. Then $\lambda^{<\kappa} = \lambda$ because $\kappa$ is regular. Indeed, any function from some $\xi<\kappa$ to $\lambda$ is a function to $\beth_\alpha(\kappa)$ for some $\alpha<\kappa$. So $\lambda^\xi = \sup_{\alpha<\kappa} (\beth_\alpha(\kappa)^\xi)$. But $\sup_{\alpha<\kappa}(\beth_\alpha(\kappa)^\xi)= \sup_{\alpha<\kappa}(\beth_{\alpha+1}(\kappa)^\xi)$, and $(\beth_{\alpha+1}(\kappa))^\xi=(2^{\beth_\alpha(\kappa)})^\xi = 2^{\beth_\alpha(\kappa)\cdot \xi} = 2^{\beth_\alpha(\kappa)}$ because $\kappa>\xi$. Consequently, $\lambda^\xi = \lambda$ and $\lambda^{<\kappa} = \lambda$.

Hence, by (3), there is a saturated model of size $\lambda$. On the other hand, since $\lambda$ is singular (of cofinality $\kappa<\lambda$), $\lambda^{<\lambda}>\lambda$ and as a result by \cite[Theorem VIII.4.7]{classification},  $T$ is $\lambda$-stable (and hence stable).
 
\end{proof}

\section{Order-Type graphs with large chromatic number} \label{S:order type graphs}
In this section we discuss graphs whose vertices are (possibly infinite) increasing sequences, where the edge relation is determined by the order type. More specifically, our main interest in this section is the following type of graphs.
\begin{definition}
Let $(I,<)$ and $(J,<)$ be linearly ordered sets and $\bar a\neq \bar b\in (I^{\U J})_<$ be increasing sequences. We define a graph $E^J_{\bar a,\bar b}$ and a directed graph $D_{\bar a,\bar b}^J$ on $(I^{\U J})_<$ by:
\begin{itemize}
\item $\bar c \E^J_{\bar a,\bar b} \bar d \iff \otp(\bar c,\bar d)=\otp(\bar a,\bar b)\vee \otp(\bar d,\bar c)=\otp(\bar a,\bar b)$
\item $\bar c \D^J_{\bar a,\bar b} \bar d \iff \otp(\bar c,\bar d)=\otp(\bar a,\bar b).$
\end{itemize}
We omit $J$ from $E^J_{\bar a,\bar b}$ and $D^J_{\bar a,\bar b}$ when it is clear from the context.

We call these graphs the \emph{(directed) order-type graphs}.
\end{definition}

\begin{remark}
Although it will not define a graph, we sometimes use the notation $D_{\bar a,\bar b}$ and $E_{\bar a,\bar b}$ even if $\bar a=\bar b$.
\end{remark}

In Section \ref{ss:embed-shift} we isolate a family of order-type graphs whose members contain all finite graphs of $\Sh_m(\omega)$ for a certain integer $m$ (Corollary \ref{C:k-ord-cov-implies shift}). In Section \ref{ss:orde-type-large-chrom} we show that order-type graphs with large chromatic number fall into this family (Theorem \ref{T:shift graphs in order-type-graphs}).

\subsection{Embedding shift graphs into order-type graphs}\label{ss:embed-shift}
\begin{definition}
Let $(I,<)$ and $(J,<)$ be linearly ordered sets, $\bar a,\bar b\in (I^{\U J})_<$ be increasing sequences and $0<k<\omega$.

We say that $\langle \bar a,\bar b\rangle$ is \emph{k-orderly} if 
there exists a finite partition $Conv(\Img(\bar a)\cup\Img(\bar b))=C_0\cup\dots \cup C_k$ by convex increasing subsets satisfying 
that for every $n<k$ and $i\in J$, $a_i\in C_n\iff b_i\in C_{n+1}$;
\end{definition}

Recall the following from \cite{1196}.
\begin{definition}
For any linearly ordered set $(A,<)$ and $k\geq 1$, let $\LSh_k(A)$ be the directed graph $((A^{\U k})_<,D)$, were $(\eta,\rho)\in D$ if and only if $\eta(i)=\rho(i-1)$ for $0<i<k$ (if $k>1$) and $\eta(0)<\rho(0)$ (if $k=1$).
\end{definition}

\begin{lemma}\label{L:k-orderly}
Let $0<k<\omega$ be an integer, $\alpha,\delta$ be ordinals, $(I,<)$ any infinite linearly ordered set satisfying $(\delta\times (2\cdot \alpha +1)^k,<_{lex})\subseteq (I,<)$. Let $\bar a,\bar b\in (I^{\U{\alpha}})_<$.
If $\langle \bar a,\bar b\rangle $ is $k$-orderly then there exists a function $\varphi:\LSh_k(\delta)\to (I^{\U{\alpha}})_<$, satisfying that for any $\eta,\rho\in \LSh_k(\delta)$, if $(\eta,\rho)\in D$ then $\varphi(\eta)\D_{\bar a,\bar b} \varphi(\rho)$.
\end{lemma}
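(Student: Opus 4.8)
\textbf{Proof proposal for Lemma \ref{L:k-orderly}.}

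The plan is to build $\varphi$ explicitly by reading off, for each vertex $\eta$ of $\LSh_k(\delta)$, an increasing $\alpha$-sequence inside $(I,<)$ whose order type relative to the image of a $D$-successor is forced to equal $\otp(\bar a,\bar b)$. The $k$-orderliness hypothesis gives a partition $\Conv(\Img(\bar a)\cup\Img(\bar b)) = C_0\cup\cdots\cup C_k$ into convex pieces so that $a_i\in C_n \iff b_i\in C_{n+1}$; the point of this is that $\otp(\bar a,\bar b)$ ``looks the same'' as shifting every coordinate up by exactly one block. First I would fix, for each $n\le k$, the order type $\gamma_n := \otp(C_n)$ of the $n$-th block, and record that $\gamma_n \le 2\cdot\alpha+1$ (each block contains at most the $a_i$'s and $b_i$'s interleaved, so at most $2\alpha$ points plus possibly a limit/endpoint — this is where the factor $2\cdot\alpha+1$ in the hypothesis $(\delta\times(2\cdot\alpha+1)^k,<_{lex})\subseteq(I,<)$ comes from). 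The idea is to use the $\delta$-coordinate of the lexicographic product to index the ``shift level'' coming from $\LSh_k(\delta)$, and the $(2\cdot\alpha+1)^k$-coordinates to carve out, at each level, enough room to reproduce the $k+1$ blocks.

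Concretely, for $\eta = (\eta(0),\dots,\eta(k-1))\in(\delta^{\U k})_<$ I would define $\varphi(\eta)$ by placing, for each $m$ with $0\le m\le k$, a copy of the block $C_m$ — suitably order-embedded into one of the $(2\cdot\alpha+1)$-factors — at lexicographic ``address'' governed by $\eta$, so that the $m$-th block of $\varphi(\eta)$ sits at the $\eta(m)$-th slice of the $\delta$-coordinate when $0<m<k$ (and the extreme blocks $C_0$, $C_k$ are anchored appropriately, using $\eta(0)$ and, implicitly, the ``$\delta$'' past $\eta(k-1)$). The defining feature I want is: if $(\eta,\rho)\in D$, i.e.\ $\eta(i)=\rho(i-1)$ for $0<i<k$ and $\eta(0)<\rho(0)$, then the $m$-th block of $\varphi(\rho)$ coincides, as a subset of $I$, with the $(m-1)$-st block of $\varphi(\eta)$ for $1\le m\le k$, while block $C_0$ of $\varphi(\rho)$ lands strictly above everything and block $C_k$ of $\varphi(\eta)$ strictly below everything relevant. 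Then for each coordinate $i\in\alpha$, if $a_i$ lies in $C_n$ and hence $b_i$ in $C_{n+1}$, the element $\varphi(\eta)_i$ sits in the copy of $C_n$ used for $\eta$ and $\varphi(\rho)_i$ sits in the copy of $C_{n+1}$ used for $\rho$, which by construction is the \emph{same} physical block shifted one level up, with the interleaving of the two sequences reproduced faithfully; hence $\otp(\varphi(\eta),\varphi(\rho)) = \otp(\bar a,\bar b)$, which is exactly $\varphi(\eta)\D_{\bar a,\bar b}\varphi(\rho)$.

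The bookkeeping that needs care — and what I expect to be the main obstacle — is setting up the lexicographic addressing so that (a) each $\varphi(\eta)$ is genuinely an \emph{increasing} $\alpha$-sequence (the blocks must be laid out in the correct left-to-right order within the ambient $(I,<)$, which constrains how the $\delta$-index and the $(2\cdot\alpha+1)^k$-indices interact — roughly, higher blocks must get lexicographically larger addresses), and (b) the ``shift by one block'' along a $D$-edge is realized as literal equality of sets in $I$, not merely an order isomorphism, since $\otp(\bar c,\bar d)$ for the directed graph $D_{\bar a,\bar b}$ is about the actual interleaved order type. I would handle this by choosing, once and for all, order embeddings $C_m\hookrightarrow (2\cdot\alpha+1)$ into the $m$-th factor-copy (this is where I use $\gamma_m\le 2\cdot\alpha+1$), and then defining the address of the $m$-th block of $\varphi(\eta)$ to be the lex-tuple $(\eta(m), *, \dots)$ with the $(2\cdot\alpha+1)^k$-part recording ``which block'' $m$ is — so that $D$-edge equality $\eta(i)=\rho(i-1)$ translates directly into the required block coincidence. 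Verifying that $\varphi$ is well-defined, increasing, and satisfies the edge condition is then a finite check over the $k+1$ blocks and the $\le 2\alpha$ points in each; I would spell out the $\eta(0)<\rho(0)$ case (where the ``new'' top block of $\varphi(\rho)$ must avoid collisions) separately, as it is the one place the strict inequality rather than equality is used.
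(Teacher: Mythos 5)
There is a genuine gap, and it sits exactly where you predicted the ``main obstacle'' would be: the within-block encoding. Your outer skeleton does match the paper's proof --- the $\delta$-coordinate of the address of $\varphi(\eta)(\beta)$ is $\eta(m)$ where $a_\beta\in C_m$, so that along a $D$-edge the identity $\rho(m-1)=\eta(m)$ aligns consecutive blocks, and the remaining $(2\cdot\alpha+1)^k$ coordinates are meant to resolve positions inside a block. But the mechanism you propose for the inside part --- one fixed order embedding $e_m:C_m\hookrightarrow(2\cdot\alpha+1)$ per block, with $D$-edges realized by ``literal coincidence of blocks'' --- does not work. First, such an embedding need not exist: the relevant points of $C_m$ are $\{a_\beta:\beta\in S_m\}$ interleaved with $\{b_\beta:\beta\in S_{m-1}\}$ (where $S_i=\{\beta:a_\beta\in C_i\}$), and their order type can be, e.g., $\omega^2+\omega$ when $\alpha=\omega^2$, which does not embed into $2\cdot\omega^2+1=\omega^2+1$ (note $2\cdot\alpha$ is an ordinal product, so $2\cdot\alpha\neq\alpha+\alpha$ in general). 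The factor $2\cdot\alpha+1$ in the hypothesis is the order type of $\alpha$ with a formal predecessor $\beta^-$ inserted below each $\beta$ and a top point $\infty$ added, not a bound on $\otp(C_m)$, and the paper uses the whole product of these copies lexicographically for a single within-block address, not one factor per block.

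Second, and more fundamentally, ``literal coincidence of blocks'' is inconsistent as a specification. The value $\varphi(\rho)(\beta)$ is a single point of $I$ determined by $\rho$ and $\beta$ alone, yet for $\beta\in S_{m}$ it must simultaneously represent $a_\beta\in C_{m}$ (when $\rho$ is the source of an edge) and $b_\beta\in C_{m+1}$ (when $\rho$ is the target); these lie in different blocks with different interleavings, so it cannot be read off a single fixed embedding of a single $C_m$. What is actually needed is a family of increasing maps $g_0,\dots,g_{k-1}$, with $g_m$ defined on the \emph{index set} $S_m$, such that for every $m$ the images $\langle g_m(\beta):\beta\in S_m\rangle$ and $\langle g_{m-1}(\beta):\beta\in S_{m-1}\rangle$ interleave in the common target exactly as $\langle a_\beta:\beta\in S_m\rangle$ interleaves with $\langle b_\beta:\beta\in S_{m-1}\rangle$ inside $C_m$. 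This compatibility between \emph{consecutive} encodings (the paper's condition $(\dagger)$) is the entire content of the lemma; the paper constructs the $g_m$ by downward induction on $m$, placing $g_{m-1}(\beta)$ at the formal predecessor of the address of the least $g_m(\gamma)$ with $b_\beta\le a_\gamma$ (this is precisely why the predecessors, hence the $2\cdot\alpha+1$, are needed), and then verifies $(\dagger)$ by a case analysis. None of this is ``a finite check''; it is the step your proposal leaves open. (A smaller sign of trouble: your shift direction is off --- along a $D$-edge it is the $(m-1)$-st address of $\rho$ that equals the $m$-th address of $\eta$, not the $m$-th of $\rho$ that equals the $(m-1)$-st of $\eta$.)
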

\begin{proof}
Assume that $Conv(\Img(\bar a),\Img(\bar b))=C_0\cup\dots\cup C_k$, as in the definition.

Let $\alpha^*=\alpha\cup\{\beta^-:\beta<\alpha\}\cup\{\infty\}$, where the $\beta^-$'s are immediate predecessors and $\infty$ is a maximal element, i.e. for any $\beta< \gamma< \alpha$
\begin{list}{•}{}
\item $\gamma<\beta^-<\beta$,
\item  $\gamma^-<\beta^-$ if and only $\gamma<\beta$ and 
\item $\gamma<\infty$.
\end{list}
For any $S\subseteq \alpha$, let $S^*$ be $S\cup\{s^-:s\in S\}\cup\{0^-,\infty\}$.

For any $x\in \left(\alpha^*\right)^n$, we denote $x^-$ the immediate predecessor of $x$ in the lexicographic order if it exists, and otherwise let $x^-=x$. Note that for any $x=(x_0,\dots,x_{n-1})\neq(0^-,\dots,0^-)\in \left(\alpha^*\right)^n$, if the maximal $l<n$ with $x_l\neq 0^-$ satisfies $x_l< \alpha$ then $x$ has an immediate predecessor.

Note that the order type of $\alpha^*$ is $2\cdot \alpha +1$, so by the assumption on $I$ we may replace $I$ by an isomorphic copy to get that $(\delta\times (\alpha^*)^k,<_{lex})\subseteq (I,<)$.

For any $0\leq i\leq k-1$ let $S_i=\{\beta< \alpha: a_\beta\in C_i\}$ and 
let $\mathcal{G}=\{\bar g=\langle g_i: S_i\cup\{\infty\} \to (S_{k-1}^*\times\dots\times S_0^*,<_{lex}): i<k\rangle: \text{ $g_i$ increasing}\}$.

%
For any $\bar g \in\mathcal{G}$ and $\eta\in (\delta^{\U k})_<$ let $f_{\eta,\bar g}\in (I^{\U \alpha})_<$ be defined by
\[f_{\eta,\bar g}(\beta)=(\eta(n_\beta),g_{n_\beta}(\beta))\in \delta\times (S^*_{k-1}\times\dots\times S_0^*)\subseteq I,\] where $\beta\in S_{n_\beta}$. We note that $f_{\eta,\bar g}$ is increasing: if $n_{\beta_1}<n_{\beta_2}$ then $\eta(n_{\beta_1})<\eta(n_{\beta_2})$. If $n_{\beta_1}=n_{\beta_2}$ then the results follows since $g_{n_{\beta_1}}=g_{n_{\beta_2}}$ is increasing.

\begin{claim}
There exists $\bar g\in \mathcal{G}$ such that for any $\eta,\rho\in \Sh_k(\delta)$ satisfying $\eta(i)=\rho(i-1)$ for $0<i<k$ (if $k>1$) or $\eta(0)<\rho(0)$ (if $k=1$), $f_{\eta,\bar g}\D_{\bar a,\bar b} f_{\rho,\bar g}$.
\end{claim}
\begin{claimproof}
For the purpose of this proof, for $1\leq i\leq k$ let $\pi_i:S^*_{k-1}\times\dots\times S_0^*\to S^*_{k-1}\times\dots\times S^*_{k-i}$ be the projection on the first $i$ coordinates. 
We choose increasing functions $g_i:S_i\cup\{\infty\} \to S_{k-1}^*\times\dots S^*_{k-i}\times \{0^-\}\times\dots\times \{0^-\}$ by downwards induction on $i<k$. Define $g_{k-1}$ by setting $g_{k-1}(\beta)=(\beta,0^-,\dots,0^-)$ for $\beta\in S_{k-1}\cup\{\infty\}$.


 Assume that $g_i$ was defined and we want to define $g_{i-1}$. 

For any $\beta\in S_{i-1}$ if there is $\gamma\in S_i$ minimal such that $b_\beta\leq a_\gamma$ then define
\[g_{i-1}(\beta)=
\begin{cases}
g_i(\gamma)=(\pi_{k-i}(g_i(\gamma)),0^-,0^-,\dots, 0^-) & \text{ if $a_\gamma=b_\beta$} \\
(\pi_{i}(g_i(\gamma))^-,\beta,0^-,\dots, 0^-) & \text{ otherwise.}
\end{cases}
\]
If such a minimal $\gamma\in S_i$ does not exists then we define
\[g_{i-1}(\beta)=(\pi_{i}(g_i(\infty)),\beta,0^-,\dots,0^-).\]
Lastly, \[g_{i-1}(\infty)=(\pi_{i}(g_i(\infty)),\infty,0^-,\dots,0^-).\]

\begin{subclaim}
For any $0\leq i\leq k-1$, and for every $\beta\in S_i$, $\pi_{i}(g_i(\beta))$ has an immediate predecessor. I.e.,  for every $\gamma<\beta\in S_i$, $\pi_{i}(g_i(\beta))>\pi_{i}(g_i(\beta))^-$.

For any $0\leq i\leq k-1$, $g_i$ is increasing.
\end{subclaim}
\begin{subclaimproof}
This is straightforward and follows, by downwards induction, that for any $\beta\in S_i$, if $g_i(\beta)=(x_0,\dots,x_{k-1})$ then the maximal $l<k$ such that $x_l\neq 0^-$ satisfies that $x_l<\alpha$. 

The fact that the $g_i$s are increasing now follows by downwards induction.
\end{subclaimproof}

The main observation is that for any $1\leq i<k$
\[(\dagger)\, \otp(\langle a_\beta: \beta\in S_i\rangle, \langle b_\beta: \beta\in S_{i-1}\rangle)=\otp(\langle g_i(\beta): \beta\in S_i\rangle, \langle g_{i-1}(\beta): \beta\in S_{i-1}\rangle ).\]
To that end, let $1\leq i<k$. Since $g_i$ and $g_{i-1}$ are increasing it is enough to compare $a_{\beta_1}$ and $b_{\beta_2}$, where $\beta_1\in S_i$ and $\beta_2\in S_{i-1}$.

\begin{list}{•}{}
\item If $a_{\beta_1}=b_{\beta_2}$ then $\beta_1\in S_i$ is the minimal such that $b_{\beta_2}\leq a_{\beta_1}$ and thus by definition $g_{i-1}(\beta_2)=g_i(\beta_1)$.
\item Assume $a_{\beta_1}<b_{\beta_2}$. If there does not exist a minimal $\gamma\in S_i$ with $b_{\beta_2}\leq a_\gamma$  then
\[g_{i-1}(\beta_2)=(\pi_{i}(g_i(\infty)),\beta_2,0^-,\dots,0^-)>\]\[(\pi_{i}(g_i(\beta_1)),0^-,\dots,0^-)=g_{i}(\beta_1).\]
Otherwise, let $\beta_1< \gamma\in S_i$ be minimal such that $b_{\beta_2}\leq a_\gamma$. If $b_{\beta_2}=a_\gamma$ then $g_{i-1}(\beta_2)=g_i(\gamma)>g_i(\beta_1).$

If $b_{\beta_2}<a_\gamma$ then 
\[g_{i-1}(\beta_2)=(\pi_{i}(g_i(\gamma))^-,\beta_2,0^-,\dots, 0^-)>\]\[ (\pi_{i}(g_i(\beta_1)),0^-,0^-,\dots, 0^-)=g_i(\beta_1).\]
\item Assume $a_{\beta_1}>b_{\beta_2}$ and let $\gamma\in S_i$ be minimal such that $a_\gamma\geq b_{\beta_2}$, so $\gamma\leq \beta_1$. If $a_\gamma=b_{\beta_2}$ then $\gamma<\beta_1$ and
$g_{i-1}(\beta_2)=g_i(\gamma)<g_i(\beta_1).$

If $a_\gamma>b_{\beta_2}$ then
\[g_{i-1}(\beta_2)=(\pi_{i}(g_i(\gamma))^-,\beta_2,0^-,\dots, 0^-)<\]\[(\pi_{i}(g_i(\beta_1)),0^-,0^-,\dots, 0^-)=g_i(\beta_1).\]

\end{list}
This proves $(\dagger)$. Let $\eta,\rho\in \LSh_k(\delta)$ be as in the statement of the lemma. We proceed to prove that $f_{\eta,\bar g} \D_{\bar a,\bar b} f_{\rho,\bar g}$.

Let $\beta_1,\beta_2< \alpha$ and assume that $\beta_1\in S_{n_1}$ and $\beta_2\in S_{n_2}$, for some $0\leq n_1,n_2\leq k-1$. Note that if $b_{\beta_2}\in C_n$, for $0<n\leq k$, then $n_2=n-1$. Assume that $k>1$.
\begin{list}{•}{}
\item Assume that $0<n_1<k$, $b_{\beta_2}\in C_{n_1}$. So $n_2=n_1-1$. Assume that $a_{\beta_1}\mathrel{\square} b_{\beta_2}$, where $\square\in\left\{ <,>,=\right\}$. By $(\dagger)$, $g_{n_1}(\beta_1)\mathrel{\square} g_{n_2}(\beta_2)$ and as a result
\[f_{\eta,\bar g}(\beta_1)=(\eta(n_1),g_{n_1}(\beta_1))=(\rho(n_1-1),g_{n_1}(\beta_1))\mathrel{\square} (\rho(n_1-1),g_{n_2}(\beta_2))=\]\[(\rho(n_2),g_{n_2}(\beta_2))=f_{\rho,\bar g}(\beta_2).\]
\item If $b_{\beta_2}\in C_n$ for some $n_1<n<k$ then necessarily, $a_{\beta_1}<b_{\beta_2}$ and $n_2=n-1\geq n_1$. Consequently,
\[f_{\eta,\bar g}(\beta_1)=(\eta(n_1),g_{n_1}(\beta_1))< (\eta(n_1+1),g_{n_2}(\beta_2))=(\rho(n_1),g_{n_2}(\beta_2))\leq \]\[(\rho(n_2),g_{n_2}(\beta_2))=f_{\rho,\bar g}(\beta_2).\]

\item If $b_{\beta_2}\in C_n$ for some $n<n_1$ then necessarily $0<n<n_1$, $a_{\beta_1}>b_{\beta_2}$ and $n_2=n-1<n_1-1$. Hence
\[f_{\eta,\bar g}(\beta_1)=(\eta(n_1),g_{n_1}(\beta_1))= (\rho(n_1-1),g_{n_1}(\beta_1))>(\rho(n_2),g_{n_2}(\beta_2))=\]\[f_{\rho,\bar g}(\beta_2).\]

\item If $b_{\beta_2}\in C_k$ then necessarily $n_2=k-1$ and $a_{\beta_1}<b_{\beta_2}$.
As a result
\[f_{\eta,\bar g}(\beta_1)=(\eta(n_1),g_{n_1}(\beta_1))\leq (\eta(k-1),g_{n_1}(\beta_1))=(\rho(k-2),g_{n_1}(\beta_1))<\]
\[(\rho(k-1),g_{n_2}(\beta_2))=(\rho(n_2),g_{n_2}(\beta_2))=f_{\rho,\bar g}(\beta_2).\]

\end{list}
If $k=1$ then $n_1=n_2=0$ and
\[f_{\eta,\bar g}(\beta_1)=(\eta(0),g_{n_1}(\beta_1))<(\rho(0),g_{n_2}(\beta_2))=f_{\rho,\bar g}(\beta_2).\]
\end{claimproof}
%
%

We may now define a map $\varphi: \Sh_k(\delta)\to (I^{\U{\alpha}})_<$ by letting for $\eta\in \Sh_k(\delta)$, $\varphi(\eta)=f_{\eta,\bar g}\in (I^{\U \alpha})_<$. This maps satisfies the requirements by the previous claim.
\end{proof}

\begin{definition}\label{D:k-ord-covered}
Let $(I,<)$ and $(J,<)$ be linearly ordered sets and $\bar a, \bar b\in (I^{\U J})_<$ be increasing sequences. 

We say that $\{\bar a,\bar b\}$ is \emph{k-orderly covered} if there exists an increasing partition of $J$ into convex sets $\langle J_\varepsilon: \varepsilon\in S\rangle$ for some $S\subseteq J$, such that for every $\varepsilon\in S$, exactly one of the following holds
\begin{enumerate}
\item $\langle \bar{a}\restriction J_\varepsilon,\bar{b}\restriction J_\varepsilon\rangle$ is $k_\varepsilon$-orderly for some $0<k_\varepsilon\leq k$;
\item $\langle \bar{b}\restriction J_\varepsilon,\bar{a}\restriction J_\varepsilon\rangle$ is $k_\varepsilon$-orderly for some $0<k_\varepsilon\leq k$; 
\item $|J_\varepsilon|=1$ and $\bar{a}\restriction J_\varepsilon=\bar{b}\restriction J_\varepsilon$.
\end{enumerate}
Moreover, for every $\varepsilon<\varepsilon^\prime\in S$, $\Img (\bar a \restriction J_\varepsilon )<\Img(\bar b \restriction J_{\varepsilon^\prime})$ and $\Img(\bar b\restriction J_\varepsilon) <\Img(\bar a \restriction J_{\varepsilon^\prime})$.
\end{definition}

\begin{corollary}\label{C:k-ord-cov-implies shift}
Let $\alpha$ be an ordinal, $(I,<)$ any infinite linearly ordered set with $(|\alpha|^+ +\aleph_0,<)\subseteq (I,<)$. Let $\bar a\neq \bar b\in (I^{\U{\alpha}})_<$ be some fixed sequences.
If $\{\bar a,\bar b\}$ is $k$-orderly covered then $((I^{\U{\alpha}})_<,E_{\bar a,\bar b})$ contains all finite subgraphs of $\Sh_m(\omega)$ for some $m\leq k$.
\end{corollary}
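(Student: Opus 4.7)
I would construct, for each finite subgraph $F$ of $\Sh_m(\omega)$, an edge-preserving embedding $\varphi_F : F \hookrightarrow ((I^{\U\alpha})_<, E_{\bar a,\bar b})$; this directly shows that the order-type graph contains all finite subgraphs of $\Sh_m(\omega)$. Set $m := \max\{k_\varepsilon : \varepsilon\in S,\ J_\varepsilon \text{ non-trivial}\}$, so $m\le k$ (this is nonempty since $\bar a\neq\bar b$). If one instead prefers the homomorphism formulation, Fact \ref{F:homomorphism is enough} can be applied once finite embeddings are in hand.

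On each non-trivial piece $J_\varepsilon$ I would apply Lemma \ref{L:k-orderly} to the $k_\varepsilon$-orderly pair given by the cover: $\langle \bar a\restriction J_\varepsilon,\bar b\restriction J_\varepsilon\rangle$ in case (1), $\langle \bar b\restriction J_\varepsilon,\bar a\restriction J_\varepsilon\rangle$ in case (2). This yields maps $t_\varepsilon, t'_\varepsilon : \LSh_{k_\varepsilon}(\omega) \to (I^{\U J_\varepsilon})_<$ that send a directed $\LSh$-edge to a pair of order type $\otp(\bar a\restriction J_\varepsilon,\bar b\restriction J_\varepsilon)$ and $\otp(\bar b\restriction J_\varepsilon,\bar a\restriction J_\varepsilon)$ respectively. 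Composing with the projection $\LSh_m(\omega)\twoheadrightarrow \LSh_{k_\varepsilon}(\omega)$ onto the first $k_\varepsilon$ coordinates (itself a graph homomorphism of directed shift graphs) extends the domain uniformly to $\LSh_m(\omega)$. The ``Moreover'' clause of Definition \ref{D:k-ord-covered} together with the hypothesis $(|\alpha|^++\aleph_0,<)\subseteq(I,<)$ lets me allocate mutually disjoint, linearly ordered sub-intervals of $I$ to each piece, so that concatenating per-piece images yields elements of $(I^{\U\alpha})_<$ in which the cross-piece interleaving coincides with that of $\bar a,\bar b$.

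For the fixed finite $F$, let $\sigma$ be the order-reversing bijection on the finite set of $\omega$-values appearing in tuples of $F$, extended to vertices by $\sigma(\eta) := (\sigma(\eta_{m-1}),\ldots,\sigma(\eta_0))$. A direct check using the shift condition $\eta_i=\rho_{i-1}$ and the inequality $\rho_{m-1}>\rho_{m-2}=\eta_{m-1}$ shows that $(\eta,\rho)\in\LSh_m(\omega)$ implies $(\sigma(\rho),\sigma(\eta))\in\LSh_m(\omega)$. Define $\varphi_F(\eta)\in(I^{\U\alpha})_<$ piece-wise by $t_\varepsilon(\eta)$ on case (1), $t'_\varepsilon(\sigma(\eta))$ on case (2), and the constant value $\bar a\restriction J_\varepsilon$ on case (3). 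For a directed $\LSh_m$-edge $(\eta,\rho)$ of $F$, the case-(1) pieces contribute $\otp(\bar a\restriction J_\varepsilon,\bar b\restriction J_\varepsilon)$ by Lemma \ref{L:k-orderly}; on case-(2) pieces, applying the lemma to the $\LSh_m$-edge $(\sigma(\rho),\sigma(\eta))$ gives $\otp(t'_\varepsilon(\sigma(\rho)),t'_\varepsilon(\sigma(\eta))) = \otp(\bar b\restriction J_\varepsilon,\bar a\restriction J_\varepsilon)$, and swapping both sides yields $\otp(t'_\varepsilon(\sigma(\eta)),t'_\varepsilon(\sigma(\rho))) = \otp(\bar a\restriction J_\varepsilon,\bar b\restriction J_\varepsilon)$. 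Combined with the pre-arranged cross-piece interleaving, this forces $\otp(\varphi_F(\eta),\varphi_F(\rho)) = \otp(\bar a,\bar b)$, so $\{\varphi_F(\eta),\varphi_F(\rho)\}\in E_{\bar a,\bar b}$.

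The hard part will be handling the direction mismatch between case (1) and case (2) pieces: without the reversal $\sigma$, naive concatenation gives a ``mixed'' order type agreeing with $\otp(\bar a,\bar b)$ on case-(1) pieces but with $\otp(\bar b,\bar a)$ on case-(2) pieces, which lies in neither equivalence class required for an $E_{\bar a,\bar b}$-edge. The vertex reversal $\sigma$ resolves this, but because $\omega$ admits no global order-reversing bijection, the construction must be carried out finitely for each $F$ --- which is exactly the form of the conclusion.
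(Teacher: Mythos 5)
Your construction is essentially the paper's: apply Lemma \ref{L:k-orderly} piecewise, compose with the first-coordinates projections $\LSh_m\twoheadrightarrow\LSh_{k_\varepsilon}$, use an order-reversal to handle type-(2) pieces, and concatenate along disjoint sub-intervals of $I$ that realize the cross-piece separation in the ``moreover'' clause of Definition \ref{D:k-ord-covered}. Your variant of the reversal (a vertex reversal $\sigma$ on the finite value set of $F$, applied to all $m$ coordinates before projecting) differs slightly from the paper's (project to the first $k_\varepsilon$ coordinates, then reverse via $x\mapsto N-1-x$, framed as $\RSh_{k_\varepsilon}\to\LSh_{k_\varepsilon}$), but it is correct: you verified that $\sigma$ turns an $\LSh_m$-edge $(\eta,\rho)$ into the $\LSh_m$-edge $(\sigma(\rho),\sigma(\eta))$, and the order-type-swapping argument works out.

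The gap is in the word ``embedding.'' Lemma \ref{L:k-orderly} only gives a map sending directed $\LSh$-edges to $D_{\bar a,\bar b}$-edges; it is not stated (and is not automatic) that this map is injective, nor that your concatenated $\varphi_F$ is injective. Without injectivity you only have a graph \emph{homomorphism} $\varphi_F:F\to G$, which does not directly exhibit $F$ as a subgraph. The paper avoids verifying injectivity entirely: it produces, for each finite $N$, a graph homomorphism $\varphi_N:\Sh_k(N)\to G$, uses compactness to get a homomorphism $\Sh_k(\omega)\to\mathcal{H}$ into an elementary extension $\mathcal{H}\succ G$, and only then invokes Fact \ref{F:homomorphism is enough}, which manufactures the $m\le k$. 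Your stated fall-back --- ``Fact \ref{F:homomorphism is enough} can be applied once finite embeddings are in hand'' --- is misframed: if you truly have embeddings you do not need the fact, and if you only have homomorphisms from \emph{finite} pieces, the fact as stated is not applicable without first passing through compactness to get a homomorphism from all of $\Sh_k(\omega)$. To close the gap cleanly you should either (a) explicitly verify injectivity of $\varphi_F$ (which requires the map of Lemma \ref{L:k-orderly} to be injective on at least one piece with $k_\varepsilon=m$, and this in turn needs every $S_n$ in that piece's $k_\varepsilon$-orderly decomposition to be nonempty --- not guaranteed by Definition \ref{D:k-ord-covered} as written), or (b) state that you obtain homomorphisms only and then run the compactness plus Fact \ref{F:homomorphism is enough} argument as in the paper.
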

\begin{proof}
Let $\langle J_\varepsilon: \varepsilon\in S\rangle$ be an increasing partition of $\alpha$ as in Definition \ref{D:k-ord-covered}, where $S\subseteq \alpha$. Since $\bar a\neq \bar b$, there exists $\varepsilon\in S$ such that $|J_\varepsilon|>1$.

For any $\varepsilon\in S$, with $|J_\varepsilon|>1$, we say that $J_\varepsilon$ is
\begin{list}{•}{}
\item of type $A$ if $\langle \bar a\restriction J_\varepsilon,\bar b\restriction J_\varepsilon\rangle$ is $k_\varepsilon$-orderly, and 
\item of type $B$ if $\langle \bar b\restriction J_\varepsilon,\bar a\restriction J_\varepsilon\rangle$ is $k_\varepsilon$-orderly.
\end{list}

Let $N<\omega$ be some natural number. By replacing $I$ with an isomorphic copy, we may assume that $(\alpha \times (N\times (2\alpha +1)^k),<_{lex})\subseteq (I,<)$.  Let $\varepsilon\in S$ and let $I_\varepsilon=\{\varepsilon\}\times (N\times (2\alpha+1)^{k})$. 

If $|J_\varepsilon|=1$ then we let $\varphi_\varepsilon:\LSh_1 (N)\to ((I_\varepsilon)^{\U{J_\varepsilon}})_<$ be such that $\varphi_\varepsilon(\eta)$ is the constant function giving $(\varepsilon,0,\dots,0)$.

For any $\varepsilon\in S$ let $E^\varepsilon_{\bar a,\bar b}= E_{\bar a\restriction J_\varepsilon, \bar b\restriction J_\varepsilon}$ and $D^\varepsilon_{\bar a,\bar b}= D_{\bar a\restriction J_\varepsilon, \bar b\restriction J_\varepsilon}$ and similarly $E^\varepsilon_{\bar b,\bar a}$ and $D^\varepsilon_{\bar b,\bar a}$.

If $|J_\varepsilon|>1$ and $J_\varepsilon$ is of type $A$ then let $\varphi_\varepsilon:\LSh_{k_\varepsilon}(N)\to(((I_\varepsilon)^{\U{J_\varepsilon}})_<,D^\varepsilon_{\bar a,\bar b})$ be as supplied by Lemma \ref{L:k-orderly}. I.e., for any $\eta,\rho\in (N^{\U{k_\varepsilon}})_<$, if $\eta(i)=\rho(i-1)$ for $0<i<k_\varepsilon$ (if $k_\varepsilon>1$) and $\eta(0)<\rho(0)$ (if $k_\varepsilon=1$) then $\otp(\varphi_\varepsilon(\eta),\varphi_\varepsilon(\rho))=\otp(\bar a\restriction J_\varepsilon,\bar b\restriction J_\varepsilon)$. 

If $|J_\varepsilon|>1$ and $J_\varepsilon$ is of type $B$ then let $\widehat{\varphi_\varepsilon}:\LSh_{k_\varepsilon}(N)\to (((I_\varepsilon)^{\U{J_\varepsilon}})_<,D^\varepsilon_{\bar b,\bar a})$ be as supplied by Lemma \ref{L:k-orderly}. I.e, for any $\eta,\rho\in (N^{\U{k_\varepsilon}})_<$, if $\eta(i)=\rho(i-1)$ for $0<i<k_\varepsilon$ (if $k_\varepsilon>1$) and $\eta(0)<\rho(0)$ (if $k_\varepsilon=1$) then $\otp(\widehat{\varphi_\varepsilon}(\eta),\widehat{\varphi_\varepsilon}(\rho))=\otp(\bar b\restriction J_\varepsilon,\bar a\restriction J_\varepsilon)$.  

By composing with the isomorphism $\RSh_{k_\varepsilon}(N)\to \LSh_{k_\varepsilon}(N)$ mapping $(x_0,\dots, x_{k_\varepsilon-1})$ to $(N-1-x_{k_\varepsilon-1},\dots,N-1-x_0)$, we arrive to a directed graph homomorphism $\varphi_\varepsilon:\RSh_{k_\varepsilon}(N)\to (((I_\varepsilon)^{\U{J_\varepsilon}})_<,D^\varepsilon_{\bar b,\bar a})$. By definition this map can be seen as a directed graph homomorphism $\varphi_\varepsilon:\LSh_{k_\varepsilon}(N)\to (((I_\varepsilon)^{\U{J_\varepsilon}})_<,D^\varepsilon_{\bar a,\bar b})$.

For $1\leq m\leq k$ let $\pi_m:(N^{\U k})_<\to (N^{\U m})_<$ be the projection on the first $m$ coordinates. Note that it is a directed graph homomorphism $\LSh_k(N)\to \LSh_m(N)$. We now define $\varphi:\LSh_k(N)\to ((I^{\U{\alpha}})_<,D_{\bar a,\bar b})$. For any $\eta\in (N^{\U k})_<$, $\varepsilon\in S$ and $\beta< \alpha$, let $\varepsilon(\beta)\in S$ be such that $\beta\in J_\varepsilon$. We define
\[\varphi(\eta)(\beta)= (\varepsilon(\beta),\varphi_{\varepsilon(\beta)}(\pi_{k_\varepsilon}(\eta))(\beta)).\]
Since, for any $\varepsilon\in S$ and $\eta\in (N^{\U k})_<$, $\varphi_{\varepsilon}(\pi_{k_\varepsilon}(\eta))$ is increasing, it is clear that $\varphi(\eta)$ is increasing as well.

Assume that $\eta,\rho\in \LSh_k(N)$ are connected, i.e., $\eta(i)=\rho(i-1)$ for $0<i<k$ (if $k>1$) and $\eta(0)<\rho(0)$ (if $k=1$). It is routine to check that $\otp(\varphi(\eta),\varphi(\rho))=\otp(\bar a,\bar b)$. As a result, $\varphi$ is also a graph homomorphism between $\Sh_k(N)$ to $((I^{\U \alpha})_<,E_{\bar a,\bar b})$.

We have proved that for every $N<\omega$ there exists a graph homomorphism $\varphi_N:\Sh_k(N)\to ((I^{\U \alpha})_<,E_{\bar a,\bar b})$. By compactness, we may find a graph homomorphism $\Sh_k(\omega)\to \mathcal{H}$ for some elementary extension $((I^{\U \alpha})_<,E_{\bar a,\bar b})\prec (\mathcal{H},E)$. By Fact \ref{F:homomorphism is enough}, there exists $m\leq k$ such that $((I^{\U \alpha})_<,E_{\bar a,\bar b})$ contains all finite subgraphs of $\Sh_m(\omega)$.
\end{proof}

\subsection{Analyzing order-type graphs with large chromatic number}\label{ss:orde-type-large-chrom}
The main goal of this section is to prove that every order-type graph of large enough chromatic number is $k$-orderly covered for some $k$, i.e. we will prove the following.
\begin{theorem}\label{T:shift graphs in order-type-graphs}
Let $\alpha$ be an ordinal, $(\theta,<)$ an infinite ordinal with $|\alpha|^+ +\aleph_0<\theta$. Let $\bar a\neq \bar b\in (\theta^{\U{\alpha}})_<$ be some fixed sequences.

Let $G=((\theta^{\U{\alpha}})_<,E_{\bar a,\bar b})$. If $\chi(G)>\beth_2(\aleph_0)$ then $G$  contains all finite subgraphs of $\Sh_m(\omega)$ for some $m\in \mathbb{N}$.
\end{theorem}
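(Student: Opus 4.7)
The plan is to prove the contrapositive: assuming $\{\bar a,\bar b\}$ is not $k$-orderly covered for any finite $k$, I construct a coloring of $G=((\theta^{\U{\alpha}})_<,E_{\bar a,\bar b})$ with at most $\beth_2(\aleph_0)$ colors. Combined with Corollary~\ref{C:k-ord-cov-implies shift}, this gives the desired embedding of all finite subgraphs of some $\Sh_m(\omega)$ into $G$, since if $\{\bar a,\bar b\}$ is $k$-orderly covered we are already done by that corollary. So the real content is a dichotomy: large chromatic number forces orderly-covered structure.

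The first step is a greedy attempt at the partition demanded by Definition~\ref{D:k-ord-covered}: I would iteratively peel off maximal initial convex pieces $J_\varepsilon\subseteq\alpha$ on which either $\langle\bar a\restriction J_\varepsilon,\bar b\restriction J_\varepsilon\rangle$ or $\langle\bar b\restriction J_\varepsilon,\bar a\restriction J_\varepsilon\rangle$ is $k_\varepsilon$-orderly of minimal arity (or $|J_\varepsilon|=1$ with $\bar a,\bar b$ agreeing there), always maintaining the separation condition between the ranges of $\bar a$ and $\bar b$ across pieces. If this greedy construction terminates successfully with $\sup_\varepsilon k_\varepsilon<\omega$, we are done. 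Otherwise the mode of failure controls the case analysis.

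I anticipate three main failure modes: (i) some would-be block requires unbounded arity because $\bar a$ and $\bar b$ oscillate past each other on a convex subinterval of $\alpha$; (ii) the separation condition is unrecoverable, i.e.\ the images of $\bar a\restriction J_\varepsilon$ and $\bar b\restriction J_{\varepsilon'}$ intertwine cofinally; (iii) the obstruction appears only at limit stages and is governed by cofinality data of the indices rather than by a finite combinatorial defect. In (i) and (ii), I would color $\bar c\in(\theta^{\U{\alpha}})_<$ by a short combinatorial invariant, essentially the partition-function $i\mapsto\sgn(c_{\pi(i)}-c_{\tau(i)})$ against a bounded canonical choice of ``witness'' subtuples read off from the forced oscillation pattern, plus the order type of $\bar c$ restricted to those witnesses. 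A count then bounds the palette by $\beth_2(\aleph_0)$, and Fact~\ref{F:basic prop}(2) patches the partial colorings across the pieces produced by the greedy construction.

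The main obstacle is case (iii): two distinct tuples $\bar c\neq\bar d$ with identical coarse invariants can still satisfy $\otp(\bar c,\bar d)=\otp(\bar a,\bar b)$ because this equality is sensitive to cofinalities of coordinates, so a naive coloring collides with a genuine edge. I expect to resolve this using PCF-theoretic machinery: a Shelah-type scale indexed by the relevant cofinality together with the characteristic functions of tuples against that scale, which refines the coarse invariant sufficiently to tell edge-endpoints apart while compressing the total number of colors to $\beth_2(\aleph_0)$. This is precisely the content deferred to Section~\ref{S:PCF}; once that coloring is available, assembling it with the tame-case colorings via Fact~\ref{F:basic prop} completes the proof.
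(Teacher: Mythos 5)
Your high-level strategy (contrapositive, build a $\beth_2(\aleph_0)$-coloring when the pair is not $k$-orderly covered, apply Corollary~\ref{C:k-ord-cov-implies shift} otherwise) matches the paper, and you correctly anticipate that a PCF-style tool is needed for the hardest case. But the case analysis is off in ways that leave a real gap. First, the paper does not use a greedy peeling: it defines the \emph{minimal convex equivalence relation} $R$ on $\alpha$ generated by $\{(\beta,\gamma):a_\beta=b_\gamma\}$, $\{(\beta,\gamma):a_\beta<a_\gamma\le b_\beta\}$, $\{(\beta,\gamma):b_\beta<b_\gamma\le a_\beta\}$, and then shows (Lemma~\ref{L:equiv classes are disjoint}) that for $R$-classes $A<B$ the separation $\Img(\bar a\restriction A)<\Img(\bar b\restriction B)$ and $\Img(\bar b\restriction A)<\Img(\bar a\restriction B)$ holds \emph{automatically}. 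Your failure mode (ii), ``the separation condition is unrecoverable,'' is therefore a phantom obstacle: with the right canonical partition it simply never occurs, and you waste a case on it while missing the actual structure. Moreover each $R$-class $A$ is automatically $n_A$-orderly for a forced $n_A\leq\omega$ (Lemma~\ref{L:we are k-orderely covered}), so only two failures are possible: some $n_A=\omega$, or all $n_A<\omega$ but $\sup_A n_A=\omega$.

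Second, your diagnosis of where the PCF machinery is needed and why is mistaken. The case "$n_A=\omega$ for some class" is handled with a plain $2^{\aleph_0}$-coloring (Lemma~\ref{L:each n_A is finite}) by fixing the $\zeta$-sequence inside that one class and recording, for each $\bar f$, the position of $\bar f$ relative to a fundamental $\omega$-sequence below $\sup_l f_{\zeta_l}$. The PCF-coloring (Section~\ref{S:PCF}) is needed precisely for the remaining case "all $n_A<\omega$ but unbounded," i.e.\ countably many classes with strictly growing arity. That difficulty has nothing to do with order types being ``sensitive to cofinalities of coordinates'' of $\bar c,\bar d$ -- $\otp(\bar c,\bar d)$ depends only on relative positions -- it is that no uniform bounded amount of per-block data suffices once the per-block arity is unbounded, so the $\langle h\rangle$-functions, scales, and the $\triangleleft$-chain of approximations of Section~\ref{S:PCF} are used to compress a tail of growing blocks into a single $\beth_2(\aleph_0)$-sized invariant. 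Until you identify this as the residual case and supply a way to assemble the per-block information across an infinite family of blocks with unbounded $k_\varepsilon$, the argument does not close.
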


In order to achieve this we will need to analyze the order-type of two infinite sequences. The tools developed here, we believe, may be useful in their own right.

We fix some ordinals $\alpha$ and $\theta$ with $\theta$ infinite and $\bar a\neq \bar b\in (\theta^{\U{\alpha}})_<$ increasing sequences.

We partition $\alpha=J_0\cup J_+\cup J_-$, where
\[J_0=\{\beta< \alpha: a_\beta=b_\beta\},\, J_+=\{\beta < \alpha: a_\beta<b_\beta\},\, J_-=\{\beta< \alpha: b_\beta<a_\beta\}.\]
Let $R$ be the minimal convex equivalence relation on $\alpha$ containing
\[\{(\beta,\gamma): a_\beta=b_\gamma\},\, \{(\beta,\gamma): a_\beta<a_\gamma\leq b_\beta\} \text{ and } \{(\beta,\gamma): b_\beta< b_\gamma\leq a_\beta\}.\]

\begin{lemma}\label{L:equiv classes are disjoint}
Let $A,B\in \alpha/R$ and assume that $A<B$. Then $\Img(\bar a \restriction A)< \Img(\bar b\restriction B)$ and $\Img(\bar b\restriction A)<\Img(\bar a \restriction B)$.
\end{lemma}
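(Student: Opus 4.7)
The plan is to argue directly from the definition of $R$ by contradiction. Fix two distinct classes $A<B$ in $\alpha/R$. Since the classes of $R$ are convex, any $\beta\in A$ and $\gamma\in B$ satisfy $\beta<\gamma$, and hence $a_\beta<a_\gamma$ and $b_\beta<b_\gamma$ (because $\bar a,\bar b$ are increasing). I want to show $a_\beta<b_\gamma$ and $b_\beta<a_\gamma$ for every such pair.

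Suppose toward a contradiction that $a_\beta\geq b_\gamma$. If $a_\beta=b_\gamma$, then $(\beta,\gamma)$ lies in the first generating set of $R$, so $\beta R\gamma$, contradicting $A\neq B$. If $a_\beta>b_\gamma$, then using $b_\beta<b_\gamma$ we obtain $b_\beta<b_\gamma\leq a_\beta$, so $(\beta,\gamma)$ lies in the third generating set, again giving $\beta R\gamma$, a contradiction. Hence $a_\beta<b_\gamma$.

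Similarly, suppose $b_\beta\geq a_\gamma$. If $b_\beta=a_\gamma$, then $(\gamma,\beta)$ lies in the first generating set, so by symmetry of $R$ we get $\beta R\gamma$, contradiction. If $b_\beta>a_\gamma$, then combined with $a_\beta<a_\gamma$ we have $a_\beta<a_\gamma\leq b_\beta$, so $(\beta,\gamma)$ lies in the second generating set, yielding $\beta R\gamma$, contradiction. So $b_\beta<a_\gamma$, which is what we needed.

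Since $\beta\in A$ and $\gamma\in B$ were arbitrary, the two desired inequalities between images follow at once. There is no real obstacle here: the whole point of including all three generating sets (with the non-strict inequalities $\leq$ and the equality case) in the definition of $R$ is exactly to allow this direct case analysis to rule out any ``overlap'' between images across distinct classes.
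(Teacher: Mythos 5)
Your proof is correct and follows essentially the same route as the paper: show that for $\beta\in A$, $\gamma\in B$ (so $\beta<\gamma$), the inequality $a_\beta\geq b_\gamma$ would force $(\beta,\gamma)$ into a generating set of $R$, and symmetrically for $b_\beta\geq a_\gamma$. The only cosmetic difference is that you separate the equality case and invoke the first generating set, whereas the paper folds the equality case directly into the non-strict $\leq$ of the third (resp.\ second) generating set; both are valid.
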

\begin{proof}
We will show that $\Img(\bar a \restriction A)< \Img(\bar b\restriction B)$, the other assertion follows similarly. Let $\beta\in A$ and $\gamma\in B$, so $\beta<\gamma$.  If $a_\beta\geq b_\gamma$ then $b_\beta< b_\gamma\leq a_\beta$ and hence $\beta \R \gamma$, contradiction. 
%
%
\end{proof} 

\begin{lemma}\label{L:each class has equal direction}
\begin{enumerate}
\item For any $\beta\in J_0$, $[\beta]_R\subseteq J_0$;
\item For any $\beta\in J_+$, $[\beta]_R\subseteq J_+$;
\item For any $\beta\in J_-$, $[\beta]_R\subseteq J_-$.
\end{enumerate}

Moreover, $[\beta]_R=\{\beta\}$ for $\beta\in J_0$.
\end{lemma}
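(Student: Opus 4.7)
The plan is to first analyze the three generating relations individually, then show that convex closure does not mix types. For the first generator, $a_\beta=b_\gamma$: if $\beta=\gamma$ we get $\beta\in J_0$, while if $\beta<\gamma$ then $b_\beta<b_\gamma=a_\beta<a_\gamma$ forces both $\beta,\gamma\in J_-$, and if $\gamma<\beta$ we get both in $J_+$ by symmetry. For the second generator, $a_\beta<a_\gamma\leq b_\beta$ immediately gives $\beta\in J_+$, and from $a_\gamma\leq b_\beta<b_\gamma$ (using $\bar b$ increasing, since necessarily $\beta<\gamma$) we get $\gamma\in J_+$. The third generator is symmetric and places both indices in $J_-$. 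In particular no generator connects a $J_0$-element to anything other than itself.

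Next, I would prove the key interval claim: whenever $\beta<\gamma$ are directly related by a single generator, every $\delta\in[\beta,\gamma]$ belongs to the same $J_*$ as $\beta,\gamma$. This is a short case check using monotonicity of $\bar a$ and $\bar b$: in the second-generator case, $a_\beta<a_\delta<a_\gamma\leq b_\beta<b_\delta$ gives $\delta\in J_+$; in the third-generator case, $b_\beta<b_\delta<b_\gamma\leq a_\beta<a_\delta$ gives $\delta\in J_-$; and in the first-generator case with $\beta<\gamma$, $b_\delta<b_\gamma=a_\beta<a_\delta$ gives $\delta\in J_-$.

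Now I would unfold the construction of $R$ as an increasing sequence $R_0\subseteq R_1\subseteq\cdots$, where $R_0$ is the equivalence closure of the three relations, and each $R_{n+1}$ is obtained from $R_n$ by replacing each class by its convex hull and re-closing under transitivity; $R=\bigcup_n R_n$. I would prove by induction on $n$ that every $R_n$-class lies in a single $J_*$. For $R_0$ this follows from the first paragraph (each generator link is same-type) together with transitivity. For the inductive step, given an $R_n$-class $C$ inside some $J_*$, take $\delta\in\mathrm{conv}(C)$; pick $\gamma_1,\gamma_2\in C$ with $\gamma_1\leq\delta\leq\gamma_2$. Since $\gamma_1,\gamma_2$ are joined by an $R_0$-chain of same-type links, the interval claim from the previous paragraph, applied link by link and then unioned (the union of the link intervals is connected and covers $[\gamma_1,\gamma_2]$), places all of $[\gamma_1,\gamma_2]$ in the same $J_*$, hence $\delta\in J_*$. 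If two such convex hulls overlap and get merged, the merged class is still contained in a single $J_*$ by the same argument, completing the induction. This proves (1)--(3).

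For the ``Moreover'' clause, let $\beta\in J_0$. By the first paragraph, no generator relates $\beta$ to any $\gamma\neq\beta$, so the $R_0$-class of $\beta$ is $\{\beta\}$. The only way $\beta$ could be added to a larger class during the convex-closure iteration is if $\beta\in[\gamma_1,\gamma_2]$ for some strictly smaller $\gamma_1<\beta<\gamma_2$ lying in a common class at some stage; but by (1)--(3) such a class lies entirely in $J_+$ or $J_-$, forcing $\beta\in J_+\cup J_-$, a contradiction. Thus $[\beta]_R=\{\beta\}$. The main obstacle is purely bookkeeping: keeping the iterative description of ``minimal convex equivalence relation'' clean enough that the inductive invariant (each class in a single $J_*$) is visibly preserved at both the convex-hull step and the retransitivization step.
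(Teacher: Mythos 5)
Your approach is genuinely different from the paper's. The paper argues by contradiction using minimality directly: if some $[\beta]_R$ met two of the sets $J_0,J_+,J_-$, it could be cut at the boundary into two nonempty convex pieces, each closed under the three generating relations, and the resulting refinement would be a strictly smaller convex equivalence relation containing the generators. You instead try to build $R$ from below by iterating ``convex hull then transitive closure'' and carry the invariant along the iteration. Your generator-level analysis and the interval claim --- that two generator-related indices and everything between them lie in the same part of the partition --- are correct, and they are essentially the same order-type computations that appear (reorganised) inside the paper's closure checks.

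The gap is in the inductive step of the unfolding. You assert that two elements $\gamma_1,\gamma_2$ of a common $R_n$-class are ``joined by an $R_0$-chain of same-type links,'' but that is only true at $n=0$. For $n\geq 1$, elements of an $R_n$-class may be linked only through interpolants that were swallowed at an earlier convex-hull stage and are not generator-related to anything nearby, so the interval claim --- which is a statement about generator pairs --- cannot be applied link by link. The invariant ``each $R_n$-class lies in a single $J_*$'' carries no information about the internal chain structure of the class and is too weak to push the convex-hull step through. Closing the gap needs an extra observation you have not made: either that one round of convex-hull-and-transitivize is already idempotent (an overlap-connected finite union of intervals is an interval, so $R_1$ has convex classes and hence $R=R_1$, making your base computation the whole proof), or equivalently that the invariant must be strengthened to ``\ldots and for $n\geq 1$ the class is convex.'' Either way this is a real mathematical step, not the ``purely bookkeeping'' you anticipated. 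A second, smaller issue sits in the ``moreover'' paragraph: you claim that a class containing $\gamma_1<\beta<\gamma_2$ must lie in $J_+\cup J_-$, but items (1)--(3) by themselves allow such a class to lie in $J_0$; ruling that out is precisely the ``moreover'' statement, so as written the argument is circular and needs to be run simultaneously with the main induction (where it does go through, since generator pairs inside $J_0$ are forced to be equal and singletons never acquire neighbours under the iteration).
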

\begin{proof}
To prove (1), (2) and (3) it is sufficient to prove a weaker version where we assume that $\beta=\min [\beta]_R$.

We show $(2)$, items $(1)$ and $(3)$ are proved similarly. Assume that $[\beta]_R\not\subsetneq J_+$. Let $X= \{ \delta<\alpha: \delta\in [\beta]_R\wedge (\forall \beta\leq x\leq \delta)(a_x<b_x)\}$ (in (1) we replace $a_x<b_x$ by $a_x=b_x$ and in (3) by $a_x>b_x$). By the assumptions, $X$ is a non empty initial segment of $[\beta]_R$ and $Y=[\beta]_R\setminus X$ is non-empty convex.

We will show that both $X$ and $Y$ are closed under the relations defining $R$ and thus derive a contradiction to the minimality of $R$.

Assume that $a_y=b_z$ with $y\in X$ and $z\in Y$. Since $X$ is an initial segment, $y<z$. Consequently, $a_y<b_y<b_z$, contradiction. Now assume that $z\in X$ and $y\in Y$, so there exists $z<x\leq y$ with $a_x\geq b_x$ and as a result $a_z<b_z<b_x\leq a_x\leq a_y=b_z$, contradiction.

Assume that $a_y<a_z\leq b_y$ with $y\in X$ and $z\in Y$, so $y<z$. Hence there is some $y<x\leq z$ with $a_x\geq b_x$ hence $a_y<a_z\leq b_y<b_x\leq a_x\leq a_z$, contradiction. Now assume that $z\in X$ and $y\in Y$, so $z<y$. This implies that $a_z<a_y<a_z$, contradiction.

Assume that $b_y<b_z\leq a_y$ with $y\in X$ and $z\in Y$. Consequently, $a_y<b_y<b_z\leq a_y$, contradiction.
 Now assume that $z\in X$ and $y\in Y$, so $z<y$.  As a result, $b_z<b_y<b_z$ 
 
Finally, we show the moreover. Assume note, so by $(1)$ it is easy to see that both $\{\beta\}$ and $[\beta]_R\setminus \{\beta\}$ are closed under the relations generating $R$. This contradicts the minimality of $R$.
\end{proof}

By Lemma \ref{L:each class has equal direction}, $R\restriction J_+$ is an equivalence relation on $J_+$. For any $A\in J_+/R$ we construct a set $Z_A\subseteq A$. We construct a sequence $\delta^A_n$ for $n<\omega$ as follows. Let $\delta^A_0=\min A$  and assume that $\delta^A_n$ has been chosen. Let $\delta^A_{n+1}\in A$ be the minimal index satisfying $b_{\delta^A_n}\leq a_{\delta^A_{n+1}}$ if such exists, otherwise stop. Let $Z_A=\langle \delta^A_n: n<n_A\rangle$, where $n_A\leq \omega$. Note that $Z_A$ is a strictly increasing sequence because $A\in J_+/R$. Furthermore, set  $C^A=Conv(\Img(\bar a\restriction A)\cup \Img(\bar b\restriction A))$ and
\begin{itemize}
\item $C^A_0=[a_{\delta^A_0},b_{\delta^A_0})$;
\item If $n_A=\omega$ then for any $0<n<\omega$ we set $C^A_n=[b_{\delta^A_{n-1}},b_{\delta^A_{n}})$;
\item If $n_A<\omega$ then for any $0<n<n_A$ set $C^A_n=[b_{\delta^A_{n-1}},b_{\delta^A_{n}})$ and $C^A_{n_A}=\{x\in C^A: b_{\delta^A_{n_A-1}}\leq x\}$.
\end{itemize}
%

\begin{lemma}\label{L:we are k-orderely covered}
Let $A\in J_+/R$.
\begin{enumerate}
\item If $n_A=\omega$ then $A=\bigcup_{n<\omega} [\delta^A_0,\delta^A_n]$. 
\item If $n_A=\omega$ then $C^A=\bigcup_{n<\omega}C_n^A$.
\item For every $\beta\in A$ and $0\leq n<n_A$, $a_\beta\in C_n^A \iff b_\beta\in C_{n+1}^A$.
\end{enumerate}
\end{lemma}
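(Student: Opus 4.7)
The plan is to establish (1) by adapting the minimality-of-$R$ argument used in Lemma \ref{L:each class has equal direction}, and then deduce (2) and (3) from (1) together with the explicit definitions of the $\delta_n^A$ and $C_n^A$. For (1), suppose towards contradiction that $n_A=\omega$ but there exists $\beta\in A$ with $\delta_n^A<\beta$ for every $n<\omega$. Set $X=\{\gamma\in A:\gamma\leq \delta_n^A\text{ for some }n<\omega\}$ and $Y=A\setminus X$; both are non-empty convex subsets of $A$, with $X$ an initial segment containing $\delta_0^A$ and $\beta\in Y$. The crucial observation is that any $\gamma\in X$ satisfies $b_\gamma\leq a_{\delta_{n+1}^A}$ for appropriate $n$ (if $\gamma=\delta_n^A$ this is the definition of $\delta_{n+1}^A$, while if $\gamma<\delta_n^A$ then $b_\gamma<b_{\delta_n^A}\leq a_{\delta_{n+1}^A}$ by monotonicity of $\bar b$). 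One then verifies that no generator of $R$ can connect a $\gamma\in X$ with a $\gamma'\in Y$ (necessarily $\gamma<\gamma'$): a generator of the form $a_{\gamma'}=b_\gamma$ or $a_\gamma<a_{\gamma'}\leq b_\gamma$ forces $a_{\gamma'}\leq a_{\delta_{n+1}^A}$, hence $\gamma'\leq \delta_{n+1}^A\in X$; the generator $a_\gamma=b_{\gamma'}$ is excluded by $a_\gamma<b_\gamma<b_{\gamma'}$; the generator $b_\gamma<b_{\gamma'}\leq a_\gamma$ contradicts $\gamma\in J_+$; and the remaining two generator types both require $\gamma'<\gamma$. Thus splitting $A$ into $X$ and $Y$ produces a strictly smaller convex equivalence relation containing all generators, contradicting the minimality of $R$.

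For (2), by (1) the strictly increasing sequence $\langle \delta_n^A:n<\omega\rangle$ is cofinal in $A$, and $A$ has no maximum: if $\delta_m^A=\max A$ for some $m$, then $a_\gamma\leq a_{\delta_m^A}<b_{\delta_m^A}$ for every $\gamma\in A$, precluding the existence of $\delta_{m+1}^A$ and contradicting $n_A=\omega$. Hence $C^A$, being the convex hull of $\Img(\bar a\restriction A)\cup\Img(\bar b\restriction A)$, has minimum $a_{\delta_0^A}$ (since $\delta_0^A=\min A$ and $\bar a<\bar b$ on $J_+$) and unattained supremum $\sup_n b_{\delta_n^A}$, so $C^A=[a_{\delta_0^A},\sup_n b_{\delta_n^A})$. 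The disjoint union $\bigcup_{n<\omega}C_n^A=[a_{\delta_0^A},b_{\delta_0^A})\cup\bigcup_{n\geq 1}[b_{\delta_{n-1}^A},b_{\delta_n^A})$ telescopes to the same half-open interval.

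For (3), I argue by case analysis on $n$. The minimality of $\delta_{n+1}^A$ in $A$ with $a_{\delta_{n+1}^A}\geq b_{\delta_n^A}$ (or, when $n+1=n_A<\omega$, the non-existence of any such element) yields $a_\beta<b_{\delta_n^A}\iff \beta<\delta_{n+1}^A$ for $\beta\in A$, with the right-hand side vacuously true in the terminal case. Combined with the equivalence $b_{\delta_{n-1}^A}\leq a_\beta\iff \delta_n^A\leq \beta$ (for $n\geq 1$) and $\delta_0^A\leq \beta$ always (for $n=0$), one obtains $a_\beta\in C_n^A\iff \delta_n^A\leq\beta<\delta_{n+1}^A$, with the strict upper bound dropped in the terminal case where $C_{n+1}^A=C_{n_A}^A$ is the upper tail. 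A parallel computation using monotonicity of $\bar b$ gives the identical characterization of $b_\beta\in C_{n+1}^A$. The main obstacle is (1): once the crossing-free generator analysis is in place, (2) and (3) essentially reduce to the definitions.
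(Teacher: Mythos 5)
Your proof is correct and follows essentially the same route as the paper's. For part (1) you split $A$ at the point where the $\delta^A_n$'s stop being cofinal, into the initial segment $X=\bigcup_n[\delta^A_0,\delta^A_n]$ and its complement, and check that none of the three generator types of $R$ crosses the split — this is exactly the paper's argument (the paper phrases it as verifying $X$ and $Y$ are each closed under the generators, but the content is identical). For (2) the paper argues pointwise that any $x\in C^A$ lies in some $C^A_n$, whereas you compute both sides as the half-open interval $[a_{\delta^A_0},\sup_n b_{\delta^A_n})$; same argument, slightly different packaging. For (3) the paper does the forward implication by case analysis on $n$ and then gets the converse from (2), while you derive the unified characterization $a_\beta\in C^A_n\iff \delta^A_n\leq\beta<\delta^A_{n+1}\iff b_\beta\in C^A_{n+1}$ directly from the minimality of $\delta^A_{n+1}$, with the terminal adjustment handled uniformly. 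That presentation is arguably a bit cleaner, but the underlying mechanism is the same.
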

\begin{proof}
\begin{enumerate}
\item Let $X=\bigcup_{n<\omega}[\delta^A_0,\delta^A_n]$. Since the $\delta^A_n$'s are chosen from $A$ and $A$ is convex, $X\subseteq A$. As in the proof of Lemma \ref{L:each class has equal direction}, it is enough to show that that both $X$ and $Y=A\setminus X$ are closed under the relations defining $R$.

If $x,y\in A$ satisfy that $a_x=b_y$ then since $b_y=a_x<b_x$ we conclude that $y<x$ and thus if $x\in X$ then $y\in X$. Now if we assume that $y\in X$, e.g. $y<\delta^A_n$, then $a_x=b_y<b_{\delta^A_n}\leq a_{\delta^A_{n+1}}$ so $x<\delta^A_{n+1}$.

Assume that $x,y\in A$ satisfy $a_x<a_y\leq b_x$. If $x\in X$, e.g. $x<\delta^A_n$, then $a_x<a_y\leq b_x<b_{\delta^A_n}\leq a_{\delta^A_{n+1}}$ so $y<\delta^A_{n+1}$. If $y\in X$ then since $x<y$ we conclude that $x\in X$ as well.

Assume that $x,y\in A$ satisfy $b_x<b_y\leq a_x$. If $y\in X$ then since $x<y$ it follows that $x\in X$ as well. Assume that $y\in Y$, i.e. $y\geq \delta^A_n$ for all $n$. But then $b_{\delta^A_n}\leq b_y\leq a_x<b_x$ for all $n$. This implies that $\delta^A_n< x$ for all $n$ and hence $x\notin X$ as well.

\item The right-to-left inclusion is straightforward. For the other inclusion, let $x\in C^A$. Since $\delta^A_0=\min A$ and $A\in J_+/R$, $a_{\delta^A_0}=\min C^A$ and hence $a_{\delta^A_0}\leq x$. If there exists $n<\omega$ with $x<b_{\delta^A_n}$ then for the minimal such $n$, $x\in C^A_{n}$. Otherwise, since $a_\beta<b_\beta$ for any $\beta\in A$, we may assume that $x\leq b_\beta$ for some $\beta\in A$. Hence $x\leq b_{\delta^A_n}$ for some $n<\omega$ by $(1)$. 

\item Let $\beta\in A$ and $n$ be as in the statement. Assume that $n_A=\omega$ is infinite ($n_A<\omega$ is similar).

Let $a_\beta\in C_n^A$. First assume $n=0$, i.e. $a_{\delta^A_0}\leq a_\beta<b_{\delta^A_0}$. It is always true that $b_{\delta^A_0}\leq b_\beta$. If $\beta\geq \delta^A_1$ then $a_{\delta^A_1}\leq a_\beta<b_{\delta^A_0}$, contradicting the choice of $\delta^A_1$.

Now, if $n>0$ then $b_{\delta^A_{n-1}}\leq a_\beta<b_{\delta^A_{n}}$ and thus by definition of $\delta^A_n$, $\delta^A_{n}\leq \beta$ so $b_{\delta^A_{n}}\leq b_\beta$. If, on the other hand, $\beta\geq \delta^A_{n+1}$ then $a_{\delta^A_{n+1}}\leq a_\beta<b_{\delta^A_{n}}$, contradiction. Hence $b_{\delta^A_{n}}\leq b_\beta<b_{\delta^A_{n+1}}$. 

Let $b_\beta\in C^A_{n+1}$. By $(2)$, $a_\beta\in C^A_k$ for some $k<\omega$. Using the above we conclude that $b_\beta\in C^A_{k+1}$ and thus $k+1=n+1$, i.e. $k=n$.
%
%
%
\end{enumerate}
\end{proof}

\begin{lemma}\label{L:special-sequence}
For any $A\in J_+/R$ there exist an increasing sequence $\langle \zeta_n^A\in A:n<n_A\rangle$, satisfying that for every $n$ with $n+1<n_A$
\[a_{\zeta^A_{n+1}}\leq b_{\zeta^A_{n}},\]
and for every $n$ with $n+2<n_A$
\[ b_{\zeta^A_{n}}<a_{\zeta^A_{n+2}}.\]
\end{lemma}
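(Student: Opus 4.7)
The plan is to construct the sequence $\langle \zeta^A_n : n < n_A\rangle$ recursively, starting from $\zeta^A_0 := \delta^A_0 = \min A$ and choosing each $\zeta^A_{n+1}$ by a ``maximum reach'' rule. The key input will be two structural properties of the $R$-class $A\subseteq J_+$, both extracted from the minimality of $R$ by the same splitting argument already used in Lemma \ref{L:each class has equal direction}:
\begin{enumerate}
\item[(i)] \emph{consecutive overlap}: if $\beta,\beta+1 \in A$ then $a_{\beta+1}\leq b_\beta$;
\item[(ii)] \emph{limit reach}: if $\lambda\in A$ is a limit of elements of $A$ from below, then for every $\mu<\lambda$ in $A$ there exists $\beta\in A$ with $\mu<\beta<\lambda$ and $b_\beta\geq a_\lambda$.
\end{enumerate}
In each case, assuming the conclusion fails would let us split the class $A$ (at $\beta+1$, or at $\lambda$) into two nonempty convex pieces that are still closed under the three generating relations of $R$ — relation 3 is vacuous in $J_+$, and relations 1 and 2 cannot cross the cut because the hypothesized failure yields $a_y \leq b_x < a_{\beta+1}$ or $a_y \leq b_x < a_\lambda$ while $a_y \geq a_{\beta+1}$ or $a_y \geq a_\lambda$, a contradiction. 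This would produce a strictly finer convex equivalence relation containing the generators, violating minimality of $R$.

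Granting (i) and (ii), the recursion proceeds as follows. Given $\zeta^A_n$ with $n+1<n_A$, set $X_n:=\{\beta\in A:\beta>\zeta^A_n \text{ and } a_\beta\leq b_{\zeta^A_n}\}$, which is nonempty by (i) applied to $\zeta^A_n$ and $\zeta^A_n+1$. If $X_n$ has a maximum, take $\zeta^A_{n+1}:=\max X_n$; otherwise $\lambda_n:=\sup X_n$ lies in $A$ and must satisfy $a_{\lambda_n}>b_{\zeta^A_n}$ (else $\lambda_n\in X_n$), and then (ii) applied with $\mu=\zeta^A_n$ gives some $\beta\in(\zeta^A_n,\lambda_n)\cap A$ with $b_\beta\geq a_{\lambda_n}$, which we take as $\zeta^A_{n+1}$. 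The first condition $a_{\zeta^A_{n+1}}\leq b_{\zeta^A_n}$ holds by construction. For the second condition $b_{\zeta^A_n}<a_{\zeta^A_{n+2}}$: in the maximum case, maximality yields $\zeta^A_{n+1}+1\in A\setminus X_n$, hence $a_{\zeta^A_{n+1}+1}>b_{\zeta^A_n}$, and any $\zeta^A_{n+2}\geq\zeta^A_{n+1}+1$ works; in the limit case the crucial observation is that $\lambda_n\in X_{n+1}$, because $b_{\zeta^A_{n+1}}\geq a_{\lambda_n}$, so the next step of the recursion is forced to pick $\zeta^A_{n+2}\geq\lambda_n$, giving $a_{\zeta^A_{n+2}}\geq a_{\lambda_n}>b_{\zeta^A_n}$.

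The main obstacle is actually forcing $\zeta^A_{n+2}\geq\lambda_n$ in the limit case, since if $X_{n+1}$ also has no maximum the naive recursion might pick an element below $\lambda_n$. To fix this I would strengthen the recursion slightly by carrying a ``lower bound'' parameter $L_k$ (set to $\lambda_{k-1}$ whenever the previous step was in the limit case and to $\zeta^A_k+1$ otherwise), and in the limit case apply the stronger form of (ii) with $\mu=L_k$ to pick $\zeta^A_{k+1}\in(L_k,\lambda_k)\cap A$ with $b_{\zeta^A_{k+1}}\geq a_{\lambda_k}$. Once this bookkeeping is in place everything else reduces to straightforward monotonicity arguments on the strictly increasing functions $a$ and $b$.
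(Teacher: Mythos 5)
Your proposal is correct, and it takes a genuinely different route from the paper. The paper's proof stays anchored to the scaffold $\langle\delta^A_n\rangle$ already constructed for $Z_A$: it runs a single splitting argument (essentially your property (i) applied across the gap $(\delta_n,\delta_{n+1})$) to manufacture auxiliary elements $\gamma_n\in(\delta_n,\delta_{n+1}]$ with $a_{\gamma_n}\le b_{\delta_n}\le a_{\delta_{n+1}}\le b_{\gamma_n}$, then restricts attention to the discrete, at most countable set $I=\{\delta_n,\gamma_n\}$ and defines $\zeta_{n+1}$ as the \emph{maximum} of $\{\beta\in I: a_{\zeta_n}<a_\beta\le b_{\zeta_n}\}$. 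Because $I$ is well-ordered of order type $\le\omega$, that maximum always exists, and the entire limit-ordinal bookkeeping you wrestle with simply never arises. Your version works directly inside the convex class $A$, which forces you to prove a second splitting fact, (ii), to handle the case where $\sup X_n$ is a limit point of $A$, and then to carry the lower-bound parameter $L_k$ to guarantee $\zeta_{n+2}\ge\lambda_n$ when two consecutive steps are both of limit type. Both of your structural claims (i) and (ii) are indeed valid consequences of minimality of $R$ (the split-at-$\beta{+}1$ and split-at-$\lambda$ arguments go through: relation~3 is vacuous on $J_+$, relations~1 and~2 cannot cross the cut), and your induction $\zeta_n\le\delta_n$ ensures the recursion runs for $n_A$ steps. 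So your argument buys nothing over the paper's in generality and costs a noticeably heavier case analysis; the paper's trick of first discretizing via $I$ and only then maximizing is the cleaner way to neutralize limit ordinals. Still, your version is self-contained and does not presuppose the earlier analysis of $Z_A$ beyond the value $n_A$, which one could regard as a mild structural advantage.
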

\begin{proof}
%
%
%
%
Let $n$ be such that $n+1<n_A$. Assume for now that $b_{\delta_n}\neq a_{\delta_{n+1}}$ (and hence $b_{\delta_n}<a_{\delta_{n+1}}$) and assume towards a contradiction that 
\begin{itemize}
\item[(*)] for any $\epsilon\in (\delta_n,\delta_{n+1})$, $b_{\epsilon}<a_{\delta_{n+1}}$.
\end{itemize}
 Note that this implies that for any such $\varepsilon$, $a_{\delta_n}<a_\varepsilon<b_{\delta_{n}}<b_{\varepsilon}<a_{\delta_{n+1}}$. Let $X=\{\beta \in A:\beta <\delta_{n+1}\}$ and $Y=A\setminus X$. This gives a convex partition of $A$, we will show that both $X$ and $Y$ are closed under the relations defining $R$.

Let $\beta,\gamma\in A$ with $a_\beta=b_\gamma$. If $\beta<\delta_{n+1}$ and $\gamma\geq \delta_{n+1}$ then $b_{\delta_{n+1}}\leq b_\gamma=a_\beta< a_{\delta_{n+1}}$, contradiction. Now assume that $\gamma< \delta_{n+1}$ and $\beta\geq \delta_{n+1}$. If $\gamma\leq \delta_n$ then $a_{\delta_{n+1}}\leq a_\beta=b_\gamma\leq b_{\delta_n}$, contradiction. If $\gamma>\delta_n$ then $a_{\delta_{n+1}}\leq a_\beta=b_\gamma<a_{\delta_{n+1}}$ since $\gamma\in (\delta_n,\delta_{n+1})$ and by (*), contradiction.

Let $\beta,\gamma\in A$ with $a_\beta<a_\gamma \leq b_\beta$. Assume that $\beta< \delta_{n+1}$ and $\gamma\geq \delta_{n+1}$. If $\beta\leq \delta_n$ then  $a_{\delta_{n+1}}\leq a_\gamma\leq b_\beta\leq b_{\delta_{n}}$, contradiction. If $\beta\in (\delta_n,\delta_{n+1})$ then $b_\beta<a_{\delta_{n+1}}\leq a_\gamma\leq b_\beta$ by $(*)$, contradiction. Note that we cannot have $\gamma<\delta_{n+1}$ and $\beta\geq \delta_{n+1}$ since $\beta<\gamma$ by assumption.

Let $\beta,\gamma\in A$ with $b_\beta<b_\gamma\leq a_\beta$. If $\beta< \delta_{n+1}$ and $\gamma\geq \delta_{n+1}$ then $b_{\delta_{n+1}}\leq b_\gamma\leq a_\beta<a_{\delta_{n+1}}<b_{\delta_{n+1}}$, contradiction. As before, $\gamma <\delta_{n+1}$ and $\beta\geq \delta_{n+1}$ is not possible since $\beta<\gamma$.

As a result, we may conclude that for all $n$ such that $n+1<n_A$ we may find $\gamma_n\in (\delta_n,\delta_{n+1}]$ satisfying $a_{\delta_n}<a_{\gamma_n}\leq b_{\delta_n}\leq a_{\delta_{n+1}}\leq b_{\gamma_n}$ (if $b_{\delta_n}=a_{\delta_{n+1}}$ choose $\gamma_n=\delta_{n+1}$, otherwise use the above).

Let $I=\{\delta_n,\gamma_n: n+1<n_A\}$. The crucial property is  that for every $\gamma\in I\setminus \{\sup I\}$ there is some $\beta \in I$ satisfying $a_\gamma<a_\beta\leq b_\gamma$. We note that for every $n$ such that $n+1<n_A$ if $\gamma\leq \delta_n$ then $\beta\leq \delta_{n+1}$.  Indeed, otherwise $a_{\delta_{n+1}}<a_\beta\leq b_\gamma\leq b_{\delta_n}$, contradiction.

We construct a sequence $\langle \zeta_n: n<k\rangle$ for some $k\leq \omega$ as follows. Define $\zeta_0=\delta_0$ and for every $n$ let $\zeta_{n+1}\in I$ be maximal\footnote{If $n_A$ is finite then such a maximal element clearly exists. Otherwise, for $\zeta \in I$ there is some $n<\omega$ such that $b_{\zeta}<a_{\delta_n}$, and hence $\zeta\in \{\xi\in I: a_\xi\leq b_\zeta\}$ is finite.} with $a_{\zeta_{n}}<a_{\zeta_{n+1}}\leq b_{\zeta_{n}}$, if exists. Obviously, this is an increasing sequence. We claim that $k\geq n_A$. By induction on $n<k$ with $n<n_A$, $\zeta_n\leq \delta_n$. In particular if $n+1<n_A$, $\zeta_{n+1}$ exists. Finally we note that by maximality, for all $n+2<n_A$, $a_{\zeta_{n+1}}\leq b_{\zeta_{n}}< a_{\zeta_{n+2}}$.
\end{proof}

For $A\in J_-/R$ we make dual (i.e. exchanging the roles of $\bar a$ and $\bar b$) constructions and similar properties hold.

\begin{corollary}\label{C:embed shift if k bounds n_A}
Let $\alpha$ be an ordinal, $(\theta,<)$ an infinite ordinal with $|\alpha|^+ +\aleph_0<\theta$. Let $\bar a\neq \bar b\in (\theta^{\U{\alpha}})_<$ be some fixed sequences.

Let $G=((\theta^{\U{\alpha}})_<,E_{\bar a,\bar b})$. If there exists $0<k<\omega$ with $n_A\leq k$ for all $A\in (J_+\cup J_-)/R$ then $G$ contains all finite subgraphs of $\Sh_m(\omega)$ for some $m\leq k$.
\end{corollary}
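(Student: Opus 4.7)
The plan is to reduce the statement to Corollary \ref{C:k-ord-cov-implies shift} by verifying that $\{\bar a,\bar b\}$ is $k$-orderly covered. The natural candidate for the convex increasing partition of $\alpha$ demanded by Definition \ref{D:k-ord-covered} is the quotient $\alpha/R$, indexed by a set $S\subseteq\alpha$ of representatives in increasing order via $J_\varepsilon:=[\varepsilon]_R$. The ``moreover'' separation clause of Definition \ref{D:k-ord-covered} is exactly Lemma \ref{L:equiv classes are disjoint}, and by Lemma \ref{L:each class has equal direction} every class lies entirely in one of $J_0$, $J_+$, $J_-$, with the $J_0$-classes being singletons on which $\bar a$ and $\bar b$ agree, so case (3) of Definition \ref{D:k-ord-covered} is immediate for those.

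For a class $A\in J_+/R$, the hypothesis $n_A\leq k$ forces $n_A$ to be \emph{finite}, so the sets $C^A_0,\dots,C^A_{n_A}$ introduced before Lemma \ref{L:we are k-orderely covered} form a genuine finite convex partition of $C^A=Conv(\Img(\bar a\restriction A)\cup\Img(\bar b\restriction A))$. The shift condition $a_\beta\in C^A_n\iff b_\beta\in C^A_{n+1}$ for $n<n_A$ is precisely Lemma \ref{L:we are k-orderely covered}(3). To conclude $n_A$-orderliness I would additionally observe that no $a_\beta$ can lie in the terminal piece $C^A_{n_A}=\{x\in C^A:b_{\delta^A_{n_A-1}}\leq x\}$: otherwise $\beta\in A$ would be a legal choice for $\delta^A_{n_A}$, contradicting the termination of the defining sequence at step $n_A$. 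Thus $\langle \bar a\restriction A,\bar b\restriction A\rangle$ is $n_A$-orderly with $0<n_A\leq k$, giving case (1) of Definition \ref{D:k-ord-covered}. For $A\in J_-/R$ the construction dual in $\bar a,\bar b$ indicated in the excerpt yields $n_A$-orderliness of $\langle \bar b\restriction A,\bar a\restriction A\rangle$, giving case (2).

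Since $\bar a\neq\bar b$ at least one equivalence class is non-singleton, so the partition is non-trivial and $\{\bar a,\bar b\}$ is $k$-orderly covered. The hypothesis $|\alpha|^++\aleph_0<\theta$ supplies the containment $(|\alpha|^++\aleph_0,<)\subseteq(\theta,<)$ needed to apply Corollary \ref{C:k-ord-cov-implies shift}, producing the required $m\leq k$ such that $G$ contains all finite subgraphs of $\Sh_m(\omega)$. I do not foresee any serious obstacle: the argument is essentially a bookkeeping assembly of the preceding lemmas, and the only step requiring a moment's thought is the verification that no $a$-value falls into the last piece $C^A_{n_A}$, which is what makes the shift condition cover every index in $A$.
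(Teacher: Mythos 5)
Your proof is correct and follows essentially the same route as the paper, which likewise invokes Lemma \ref{L:equiv classes are disjoint} and Lemma \ref{L:we are k-orderely covered}(3) to see that the $R$-classes furnish a $k$-orderly cover and then applies Corollary \ref{C:k-ord-cov-implies shift}. Your additional observation that no $a_\beta$ falls in the terminal block $C^A_{n_A}$ (since otherwise $\delta^A_{n_A}$ would be defined) is a useful piece of bookkeeping that the paper leaves implicit in the ``$n_A<\omega$ is similar'' clause of the proof of Lemma \ref{L:we are k-orderely covered}(3).
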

\begin{proof}
By Lemma \ref{L:equiv classes are disjoint} and Lemma \ref{L:we are k-orderely covered}(3), $\{\bar a,\bar b\}$ is $k$-orderly covered in the sense of Definition \ref{D:k-ord-covered}. Now apply Corollary \ref{C:k-ord-cov-implies shift}.
\end{proof}

The aim of the rest of this section is to prove that $\{n_A: A\in (J_+\cup J_-)/R\}$ has a finite bound. From now on we will only need the sequences defined in Lemma \ref{L:special-sequence}.

%
%
\begin{lemma}\label{L:each n_A is finite}
If $\chi(G)>2^{\aleph_0}$ then for any $A\in (J_+\cup J_-)/R$, $n_A<\omega$.
\end{lemma}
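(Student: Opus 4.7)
The plan is to argue by contrapositive: assuming $n_A=\omega$ for some $A\in(J_+\cup J_-)/R$, I will construct a coloring of $G$ with at most $2^{\aleph_0}$ colors, contradicting $\chi(G)>2^{\aleph_0}$. By the symmetry between $\bar a$ and $\bar b$ (which swaps $J_+$ and $J_-$) we may assume $A\in J_+/R$. Lemma~\ref{L:special-sequence} then supplies a strictly increasing sequence $\langle\zeta_n:n<\omega\rangle$ in $A$ satisfying $a_{\zeta_{n+1}}\leq b_{\zeta_n}$ and $b_{\zeta_n}<a_{\zeta_{n+2}}$ for all $n$.

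The first step is to reduce the problem to a coloring question on strictly increasing $\omega$-sequences. Define the restriction map $\Psi:(\theta^{\U\alpha})_<\to(\theta^{\U\omega})_<$ by $\Psi(\bar c)=\langle c_{\zeta_n}:n<\omega\rangle$. For any edge $\bar c\E\bar d$ in $G$, the order type of $(\bar c,\bar d)$ equals $\otp(\bar a,\bar b)$ or $\otp(\bar b,\bar a)$; translating Lemma~\ref{L:special-sequence} to coordinates $\zeta_n$ gives, after possibly relabeling $\bar c$ and $\bar d$, the infinite interleaving $c_{\zeta_{n+1}}\leq d_{\zeta_n}<c_{\zeta_{n+2}}$ for all $n$. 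Since $\zeta_0\in A\subseteq J_+$ we get $c_{\zeta_0}<d_{\zeta_0}$, so $\Psi(\bar c)\neq\Psi(\bar d)$. Let $H$ be the graph on $(\theta^{\U\omega})_<$ whose edges are the unordered pairs $\{s,t\}$ of distinct sequences with $s_{n+1}\leq t_n<s_{n+2}$ for all $n$ or the symmetric condition. Then $\Psi:G\to H$ is a graph homomorphism, and by Fact~\ref{F:basic prop}(3) it suffices to show $\chi(H)\leq 2^{\aleph_0}$.

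The main step, which I expect to be the crux, is bounding $\chi(H)$. The key observation driving the coloring is that the infinite interleaving forces suprema to coincide: if $s,t$ are $H$-adjacent via $s_{n+1}\leq t_n<s_{n+2}$, then sandwiching yields $\sup_n s_n=\sup_n t_n$, and this common value $\lambda$ is an ordinal of cofinality $\omega$ (being the limit of a strictly increasing $\omega$-sequence). Fixing, once and for all, for each ordinal $\lambda$ of cofinality $\omega$ a strictly increasing cofinal sequence $\langle\lambda^{(m)}:m<\omega\rangle$ in $\lambda$, define
\[
\phi:V(H)\to\omega^\omega,\qquad \phi(s)(k)=\min\bigl\{m<\omega:s_k<\lambda^{(m)}\bigr\},
\]
where $\lambda=\sup_n s_n$; this map takes values in a set of size at most $|\omega^\omega|=2^{\aleph_0}$.

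The final step is to verify that $\phi$ is a proper coloring of $H$: given an edge with $s_{n+1}\leq t_n<s_{n+2}$, both vertices have the same supremum $\lambda$ and so use the same cofinal sequence, and the defining inequalities yield $\phi(s)(n+1)\leq\phi(t)(n)\leq\phi(s)(n+2)$ for every $n$. Since $\phi(s)$ is non-decreasing (as $s$ is strictly increasing), the assumption $\phi(s)=\phi(t)$ would force $\phi(s)(n+1)=\phi(s)(n)$ for all $n$, i.e., $\phi(s)$ constant; but a constant value $m$ would imply $\sup_n s_n\leq\lambda^{(m)}<\lambda$, a contradiction. Hence $\phi(s)\neq\phi(t)$, giving $\chi(H)\leq 2^{\aleph_0}$ and therefore $\chi(G)\leq 2^{\aleph_0}$, contradicting the hypothesis.
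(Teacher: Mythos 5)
Your proof is correct and takes essentially the same approach as the paper: both exploit that the supremum $\gamma$ of the restricted sequence $\langle f_{\zeta_l}:l<\omega\rangle$ has cofinality $\omega$ and is shared by adjacent vertices, and then color via a fixed cofinal $\omega$-sequence in $\gamma$. The paper's coloring records the subset of $\omega\times\omega$ of pairs $(l,n)$ with $f_{\zeta_l}<\alpha_{\gamma,n}$ and exhibits a single distinguishing pair $(k,n)$ for adjacent $\bar f,\bar g$; your $\omega^\omega$-valued $\phi$, the auxiliary graph $H$, and the constant-function contradiction are a cosmetic repackaging of the same information.
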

\begin{proof}
We assume that $A\in J_+/R$, the proof for $A\in J_-/R$ is similar. Assume towards a contradiction that $n_A=\omega$. We will show that $\chi(G)\leq 2^{\aleph_0}$.

Let $S=\{\beta\leq \theta:\cf(\beta)=\aleph_0\}$. 
%
Let $\langle \zeta_l=\zeta^A_l\in A:l<\omega\rangle$ be the sequence supplied by Lemma \ref{L:special-sequence}. 

For any $\gamma\in S$ choose an increasing sequence of ordinals $\langle \alpha_{\gamma,n}:n<\omega\rangle\subseteq \gamma$ with limit $\gamma$. We define a coloring map $c:(\theta^{\U{\alpha}})_<\to 2^{\aleph_0\times\aleph_0}$.
For any $\bar f\in (\theta^{\U{\alpha}})_<$ let $\gamma(\bar f)=\sup\{f_{\zeta_l}:l<\omega\}\in S$ and
\[c(\bar f)=\{(l,n):l,n<\omega,\, f_{\zeta_l}<\alpha_{\gamma(\bar f),n}\}.\]
To show that it is a legal coloring, let $\bar f,\bar g\in (\theta^{\U{\alpha}})_<$ such that $\otp(\bar f,\bar g)=\otp(\bar a,\bar b)$.
By assumption $f_{\zeta_l}<f_{\zeta_{l+1}}\leq g_{\zeta_l}\leq f_{\zeta_{l+2}}$ for $l<\omega$, and hence $\gamma(\bar f)=\gamma(\bar g)$. By definition, there is some $n<\omega$ such that $f_{\zeta_0}<\alpha_{\gamma(\bar f),n}$ and let $k$ be the minimal such that $f_{\zeta_{k+1}}\geq \alpha_{\gamma(\bar f),n}$. So by minimality of $k$,
\[f_{\zeta_k}<\alpha_{\gamma(\bar f),n}\leq f_{\zeta_{k+1}}\leq g_{\zeta_{k}}\]
and hence $(k,n)\in c(\bar f)$ but $(k,n)\notin c(\bar g)$ so $c(\bar f)\neq c(\bar g)$.
%
%
%
%
\end{proof}

The next lemma requires a more complicated argument: Section \ref{S:PCF} below. Let us introduce some notation. 

Fix some sequence $\langle A_\varepsilon \in (J_+/R : \varepsilon<\omega \rangle$. 
For any $\varepsilon<\omega$, let $J_\varepsilon=\{ \zeta_n^{A_\varepsilon}\in A_\varepsilon: n<n_{A_\varepsilon}\}$ be the sequence supplied by Lemma \ref{L:special-sequence} applied to $A_\varepsilon$. 
Let $J=\bigcup_{\varepsilon<\omega}J_\varepsilon$,
 $\Omega=(\theta^{\U J})_<$, 
 $R'=\{(\bar c,\bar d)\in \Omega^2: \otp(\bar c,\bar d)=\otp(\bar a\restriction J,\bar b\restriction J)\}$ and 
 $\chi=\beth_2(\aleph_0)$. 
 Note that $R'$ is an irreflexive relation on $\Omega$ satisfying that if $f_1 \R' f_2$, $f_1,f_2\in \Omega$, then for every $\varepsilon<\omega$ and $i\in J_\varepsilon$, the following hold:
\[f_1(i)<f_2(i)\tag{1}\] 
and for any $i\in J_\varepsilon$ with $\suc{i}\in J_\varepsilon$
\[f_1(\suc{i})\leq f_2(i)\tag{2},\] and for any $i\in J_\varepsilon$ with $\suc{\suc{i}}\in J_\varepsilon$,
\[f_2(i)<f_1(\suc{\suc{i}})\tag{3},\] where $\suc{i}$ is the successor of $i$ in $J_\varepsilon$.

Under these assumptions (or more generally under Assumption \ref{A:R}), we will prove in Conclusion \ref{Con:coloring-pcf} that 
\begin{itemize}
    \item[(*)] If $n_{A_\varepsilon} < \omega$ for all $\varepsilon<\omega$, then there exists a function $c:\Omega\to \chi$ satisfying that if $f_1,f_2\in \Omega$ and $f_1 \R' f_2$ then $c(f_1)\neq c(f_2)$. In other words, there exists a coloring of the directed graph $(\Omega,R')$ of cardinality $\chi$. 
\end{itemize}

\begin{lemma}\label{L:all n_A are bounded}
If $\chi(G)>\beth_2(\aleph_0)$ then the set $\{n_A: A\in (J_+\cup J_-)/R\}$ is bounded.
\end{lemma}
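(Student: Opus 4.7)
The plan is to prove the contrapositive: assume that $\{n_A:A\in (J_+\cup J_-)/R\}$ is unbounded, and deduce $\chi(G)\le\beth_2(\aleph_0)$. If $\chi(G)\le 2^{\aleph_0}$ there is nothing to prove, so we may assume $\chi(G)>2^{\aleph_0}$; then Lemma~\ref{L:each n_A is finite} forces every $n_A$ to be finite, so unboundedness supplies infinitely many distinct classes with arbitrarily large (but finite) $n_A$. Since the graph $E_{\bar a,\bar b}$ is unchanged by swapping $\bar a\leftrightarrow\bar b$ and this swap exchanges $J_+/R$ with $J_-/R$, we may assume without loss of generality that $\{n_A:A\in J_+/R\}$ is unbounded, and choose distinct classes $\langle A_\varepsilon\in J_+/R:\varepsilon<\omega\rangle$.

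For each $\varepsilon<\omega$, let $J_\varepsilon=\{\zeta_n^{A_\varepsilon}:n<n_{A_\varepsilon}\}$ be the sequence produced by Lemma~\ref{L:special-sequence}, and set $J=\bigcup_\varepsilon J_\varepsilon$, $\Omega=(\theta^{\U J})_<$, $R'$ and $\chi=\beth_2(\aleph_0)$ exactly as in the passage preceding the lemma. Conditions (1)--(3) on $R'$ are immediate: for $i=\zeta_n^{A_\varepsilon}\in J_\varepsilon$ we have $a_i<b_i$ because $A_\varepsilon\subseteq J_+$, and the two inequalities in Lemma~\ref{L:special-sequence} read $a_{\suc{i}}\le b_i$ and $b_i<a_{\suc{\suc{i}}}$ whenever the relevant successors stay inside $J_\varepsilon$. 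Hence the hypotheses of the forthcoming Conclusion~\ref{Con:coloring-pcf}, statement $(*)$, are satisfied, and that conclusion produces a coloring $c:\Omega\to\chi$ of the directed graph $(\Omega,R')$.

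Lift $c$ to $\tilde c:(\theta^{\U\alpha})_<\to\chi$ by $\tilde c(\bar f)=c(\bar f\restriction J)$; this is well defined because the restriction of a strictly increasing injection is again one. To check that $\tilde c$ is a legal coloring of $G$, suppose $\bar f\E_{\bar a,\bar b}\bar g$; without loss of generality $\otp(\bar f,\bar g)=\otp(\bar a,\bar b)$, and the order isomorphism of ranges witnessing this equality restricts to indices in $J$ to give $\otp(\bar f\restriction J,\bar g\restriction J)=\otp(\bar a\restriction J,\bar b\restriction J)$, i.e.\ $(\bar f\restriction J)\R'(\bar g\restriction J)$. Since a coloring of a directed graph automatically colors its symmetric closure, $c(\bar f\restriction J)\neq c(\bar g\restriction J)$ and therefore $\tilde c(\bar f)\neq \tilde c(\bar g)$. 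Thus $\chi(G)\le\beth_2(\aleph_0)$, contradicting the hypothesis.

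The genuine work, and the main obstacle, is entirely inside Conclusion~\ref{Con:coloring-pcf} (deferred to Section~\ref{S:PCF}), which uses PCF-style machinery to color $(\Omega,R')$ with only $\beth_2(\aleph_0)$ colors from the three monotonicity inequalities (1)--(3). The role of the present lemma is just a combinatorial wrapper: assemble the countably many ``staircases'' $\{\zeta_n^{A_\varepsilon}\}$ into a single index set $J$, verify inequalities (1)--(3), and pull the resulting coloring back along $\bar f\mapsto \bar f\restriction J$; the only point requiring a sanity check is that this restriction really sends $\E_{\bar a,\bar b}$-edges into the symmetric closure of $R'$, which is a standard property of order types of pairs of strictly increasing sequences restricting well to sub-index sets.
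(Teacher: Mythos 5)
Your proof is correct and follows essentially the same route as the paper. The only genuine difference is cosmetic: you prove the contrapositive and handle the $J_-/R$ case via the symmetry $\bar a \leftrightarrow \bar b$ (which fixes $E_{\bar a,\bar b}$ and $R$, swaps $J_+$ with $J_-$, and turns the dual $Z_A$-construction for $J_-$-classes into the primary one), whereas the paper argues directly and treats $J_-/R$ by invoking the dual relation $R''$ with the dual construction. Both amount to the same reduction; your verifications of conditions (1)--(3) and of the edge-preservation of $\bar f \mapsto \bar f\restriction J$ match the paper's. One small point worth making explicit: when choosing the classes $A_\varepsilon$, you should arrange $n_{A_\varepsilon}$ to be strictly increasing (or at least unbounded, e.g.\ $\varepsilon < n_{A_\varepsilon}$ as the paper does), since the PCF machinery of Section~\ref{S:PCF} needs $\sup_{\varepsilon}|J_\varepsilon| = \omega$, not merely infinitely many distinct blocks.
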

\begin{proof}
By Lemma \ref{L:each n_A is finite}, for any $A\in (J_+\cup J_-)/R$, $n_A<\omega$. We will show that $\{n_A: A\in J_+/R\}$ and $\{n_A: A\in J_-/R\}$ are both bounded.

Assume that $\{n_A: A\in J_+/R\}$ is unbounded. Let $\{A_\varepsilon\in J_+/R:\varepsilon<\omega\}$ be a family of convex equivalence classes such that $\varepsilon<n_{A_\varepsilon}$. 


By (*), there exists a function $c:\Omega\to \beth_2(\aleph_0)$ satisfying that if $f_1,f_2\in \Omega$ and $f_1\R' f_2$ then $c(f_1)\neq c(f_2)$. Let $H=(\Omega,(R')^{sym})$ be the graph induced by $R'$ (i.e. $(\bar c, \bar d)\in (R')^{sym} \iff (\bar c,\bar d)\in R'\vee (\bar d,\bar c)\in R'$). The map $c$ induces a coloring on $H$ and hence $\chi(H)\leq \beth_2(\aleph_0)$. Since the map $(\theta^{\U \alpha})_< \to \Omega$ given by $\eta\mapsto \eta\restriction J$ is a graph homomorphism, $\chi(G)\leq \beth_2(\aleph_0)$ and this contradicts the assumption.

If on the other hand $\{n_A: A\in J_-/R\}$ is unbounded then we proceed as above but using $R''=\{(\bar c,\bar d)\in \Omega^2: \otp(\bar c,\bar d)=\otp(\bar b\restriction J,\bar a\restriction J)\}$ and the dual construction (replacing $R'$ by $R''$ in (*)) mentioned above instead and arrive at a similar contradiction.
\end{proof}

Finally, we may conclude:
\proof[proof of Theorem \ref{T:shift graphs in order-type-graphs}]
This is a direct consequence of Corollary \ref{C:embed shift if k bounds n_A} and Lemma \ref{L:all n_A are bounded}.
\qed

\section{Coloring increasing functions}\label{S:PCF}
This section's main result is Conclusion \ref{Con:coloring-pcf}, used in the final stage of the previous section. We prove that under mild conditions on a directed graph, namely Assumption \ref{A:R}, on a family of strictly increasing functions there exists a coloring of small cardinality.

Let $\kappa=\cf(\kappa)$ be a regular cardinal and $(J,<)$ a well order of cofinality $\kappa$. Let $\sigma=(2^{\kappa})^+$, $\theta$ be an ordinal and $\chi=\chi^{\kappa}+\chi^{<\sigma}$ a cardinal. 

Let $\langle J_\varepsilon: \varepsilon <\kappa\rangle$ be an increasing partition of $J$ into  finite convex sets. Assume that $\sup_{\varepsilon<\kappa} |J_\varepsilon|=\omega$. Let $\mathcal{D}$ be a non-principal ultrafilter on $\kappa$ containing the filter generated by $\{\{\varepsilon<\kappa: |J_\varepsilon|\geq n\}: n<\omega\}$.

Let $\Omega$ be the set of functions from $J$ to $\theta$ that are strictly increasing on each $J_\varepsilon$ ($\varepsilon <\kappa$). Let $\mathcal{H}=(\theta+1)^\kappa$.

\begin{assumption}\label{A:R}
$R$ is an irreflexive relation on $\Omega$ satisfying that if $f_1 \R f_2$, $f_1,f_2\in \Omega$, then for every $\varepsilon<\kappa$ and $i\in J_\varepsilon$
\[f_1(i)<f_2(i)\tag{1}\] and for any $i\in J_\varepsilon$ with $\suc{i}\in J_\varepsilon$
\[f_1(\suc{i})\leq f_2(i)\tag{2},\]
and for any $i\in J_\varepsilon$ with $\suc{\suc{i}}\in J_\varepsilon$,
\[f_2(i)<f_1(\suc{\suc{i}})\tag{3},\]
where $\suc{i}$ is the successor of $i$ in the finite set $J_\varepsilon$.

%
\end{assumption}

We say a subset $X$ of $\Omega$ is \emph{trivial} if $f_1\not\R f_2$ for any $f_1,f_2\in X$.

\begin{definition}\label{D:def of approximation}
An approximation $\Ba$ is a partition $\Omega=\bigcup_{s\in S_\Ba}\Omega_{s}^\Ba$ (so all the $\Omega^{\Ba}_s$'s are non-empty), $\rho_{s}^\Ba\in \chi^{<\sigma}$ and $h^\Ba_s\in \mathcal{H}$ ($s\in S_\Ba$) satisfying
\begin{enumerate}
\item for every $s\in S_\Ba$ and $f\in\Omega^\Ba_s$, $\{\varepsilon<\kappa: \Rg(f\restriction J_\varepsilon)\subseteq h^\Ba_s(\varepsilon)\}\in \mathcal{D}$
\item if $s\neq t\in S_\Ba$ and $\rho_s^\Ba=\rho^\Ba_t$ then for every $f_1\in \Omega^\Ba_s$ and $f_2\in \Omega^\Ba_t$, $f_1\not\R f_2$.
\end{enumerate}
\end{definition}

We want to define when one approximation is better than the other.
\begin{definition}\label{D:order on approx}
For two approximations $\Ba$ and $\Bb$ we will say that $\Ba\trianglelefteq_g \Bb$ if there exists a surjective function $g: S_\Bb \to S_\Ba$ satisfying
\begin{enumerate}
\item for any $s\in S_\Ba$, $\{\Omega^\Bb_t: t\in g^{-1}(s)\}$ is a partition of $\Omega^\Ba_s$.
\item if $s\in S_\Ba$ and $\Omega^\Ba_s$ is trivial then $g^{-1}(s)$ is a singleton $t\in S_\Bb$ satisfying $h^\Ba_s=h^\Bb_t$ and $\Omega^\Ba_s=\Omega^\Bb_t$ (in particular $\Omega^\Ba_t$ is also trivial).
\item for $t\in S_\Bb$,  $\{\varepsilon<\kappa: h^\Bb_t(\varepsilon)\leq h^\Ba_{g(t)}(\varepsilon)\}\in \mathcal{D}$.
\item for $t\in S_\Bb$, $\rho^\Ba_{g(t)}$ is an initial segment of $\rho^\Bb_t$.
\end{enumerate}

We will say that $\Ba\triangleleft_g \Bb$ if $\Ba\trianglelefteq_g \Bb$ and in addition for every $t\in S_\Bb$, either $\Omega^\Bb_t$ is trivial or $\{\varepsilon<\kappa :h^\Bb_t(\varepsilon)<h^\Ba_{g(t)}(\varepsilon)\}\in \mathcal{D}$. 
\end{definition}

The following is clear.
\begin{lemma}\label{L:composition-approx}
Let $\Ba,\Bb$ and $\Bc$ be approximations. If $\Ba\trianglelefteq_g \Bb$ and $\Bb\trianglelefteq_h\Bc$ then $\Ba\trianglelefteq_{g\circ h} \Bc$. If, in addition, either $\Ba \triangleleft_g \Bb$ or $\Bb \triangleleft_h\Bc$ then $\Ba \triangleleft_{g\circ h}\Bc$.
\end{lemma}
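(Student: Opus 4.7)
The plan is to verify each of the four clauses of Definition \ref{D:order on approx} for the composed map $g \circ h$, using the corresponding clauses for $g$ and $h$ individually. Surjectivity of $g \circ h$ is immediate from surjectivity of both factors. For clause (1), given $s \in S_\Ba$, note that $(g \circ h)^{-1}(s) = h^{-1}(g^{-1}(s))$; for each $t \in g^{-1}(s)$ the family $\{\Omega^\Bc_u : u \in h^{-1}(t)\}$ partitions $\Omega^\Bb_t$, and $\{\Omega^\Bb_t : t \in g^{-1}(s)\}$ partitions $\Omega^\Ba_s$, so the union over $t \in g^{-1}(s)$ yields the desired partition of $\Omega^\Ba_s$. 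For clause (2), if $\Omega^\Ba_s$ is trivial, then $g^{-1}(s)$ is a singleton $\{t\}$ with $h^\Ba_s = h^\Bb_t$ and $\Omega^\Ba_s = \Omega^\Bb_t$; the latter is then itself trivial, so $h^{-1}(t)$ is a singleton $\{u\}$ with $h^\Bb_t = h^\Bc_u$ and $\Omega^\Bb_t = \Omega^\Bc_u$, and chaining the equalities gives the required singleton and data.

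Clauses (3) and (4) follow from the fact that $\mathcal{D}$ is a filter and the ``initial segment of'' relation is transitive. For (3), given $u \in S_\Bc$, the sets $X_1 = \{\varepsilon : h^\Bc_u(\varepsilon) \leq h^\Bb_{h(u)}(\varepsilon)\}$ and $X_2 = \{\varepsilon : h^\Bb_{h(u)}(\varepsilon) \leq h^\Ba_{g(h(u))}(\varepsilon)\}$ both lie in $\mathcal{D}$, so their intersection is in $\mathcal{D}$ and on it we have $h^\Bc_u(\varepsilon) \leq h^\Ba_{(g\circ h)(u)}(\varepsilon)$. For (4), $\rho^\Ba_{g(h(u))}$ is an initial segment of $\rho^\Bb_{h(u)}$, which is an initial segment of $\rho^\Bc_u$, hence $\rho^\Ba_{(g\circ h)(u)}$ is an initial segment of $\rho^\Bc_u$.

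The only step requiring a small observation is the strict case. Fix $u \in S_\Bc$ with $\Omega^\Bc_u$ non-trivial; we must show $\{\varepsilon : h^\Bc_u(\varepsilon) < h^\Ba_{(g\circ h)(u)}(\varepsilon)\} \in \mathcal{D}$. The key point is that triviality is inherited by subsets, so the inclusion $\Omega^\Bc_u \subseteq \Omega^\Bb_{h(u)}$ (from clause (1) of $\Bb \trianglelefteq_h \Bc$) forces $\Omega^\Bb_{h(u)}$ to be non-trivial as well. Hence under either hypothesis $\Ba \triangleleft_g \Bb$ or $\Bb \triangleleft_h \Bc$, one of the two $\mathcal{D}$-large sets $X_1, X_2$ above can be upgraded to its strict version; intersecting it with the other (still $\mathcal{D}$-large) set yields a $\mathcal{D}$-large set of $\varepsilon$ on which $h^\Bc_u(\varepsilon) < h^\Ba_{(g\circ h)(u)}(\varepsilon)$, as required. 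No further obstacle arises; the whole lemma is a direct unwinding of the definitions, with the triviality-inheritance remark being the only non-automatic step.
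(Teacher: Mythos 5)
Your proof is correct and is exactly the direct verification the paper expects; the paper itself states only ``The following is clear'' and gives no argument, so you are filling in the routine details. The one step that genuinely needs an observation, as you flag, is that non-triviality of $\Omega^\Bc_u$ propagates to $\Omega^\Bb_{h(u)}$ via the inclusion from clause (1), which lets you invoke the strict hypothesis on whichever link of the chain carries it; you handle this correctly in both cases.
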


\begin{proposition}\label{P:approx-small-cof}
Let $\Ba$ be an approximation. Then there exists an approximation $\Ba\trianglelefteq_g \Bb$ satisfying the following.
\begin{enumerate}
\item If $t\in S_\Bb$, $\Omega^\Ba_{g(t)}$ is non-trivial and $\{\varepsilon<\kappa: 0<\cf(h^\Ba_{g(t)}(\varepsilon))\leq \chi\}\in \mathcal{D}$ then either $\Omega^\Bb_t$ is trivial or $\{\varepsilon<\kappa: h^\Bb_t(\varepsilon)<h^\Ba_{g(t)}(\varepsilon)\}\in\mathcal{D}$.
\item If $t\in S_\Bb$ satisfies that $\{\varepsilon<\kappa:0<\cf(h^\Ba_{g(t)}(\varepsilon))\leq \chi\}\notin \mathcal{D}$ then $h^\Bb_t=h^\Ba_{g(t)}$ and $\Omega^\Bb_t=\Omega^\Ba_{g(t)}$.
\end{enumerate}
Lastly, for $t\in S_\Bb$, if $\rho^\Ba_{g(t)}\in \chi^{\xi}$, with $\xi<\sigma$, then $\rho^\Bb_t\in \chi^{\xi+1}$.
\end{proposition}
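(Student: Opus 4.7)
The plan is to build $\Bb$ by independently refining each piece $\Omega_s^\Ba$ of $\Ba$ and then indexing the refinements by a single element of $\chi$ appended to $\rho_s^\Ba$. Setting $B_s := \{\varepsilon<\kappa : 0<\cf(h_s^\Ba(\varepsilon))\leq \chi\}$, three cases arise: (i) $\Omega_s^\Ba$ is trivial, so Definition \ref{D:order on approx}(2) forces a singleton refinement with $\Omega_t^\Bb = \Omega_s^\Ba$ and $h_t^\Bb = h_s^\Ba$; (ii) $\Omega_s^\Ba$ is non-trivial but $B_s \notin \mathcal{D}$, where clause (2) of the proposition demands the same singleton behaviour; and (iii) $\Omega_s^\Ba$ is non-trivial with $B_s \in \mathcal{D}$. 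In cases (i) and (ii) I would simply append $0$ to $\rho_s^\Ba$; all of the real work is in case (iii).

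For case (iii), for every $\varepsilon \in B_s$ fix once and for all a cofinal sequence $\langle \alpha_\zeta^{s,\varepsilon} : \zeta < \cf(h_s^\Ba(\varepsilon))\rangle$ in $h_s^\Ba(\varepsilon)$; write $\gamma_\varepsilon$ for the predecessor of $h_s^\Ba(\varepsilon)$ whenever $h_s^\Ba(\varepsilon)$ is a successor. For $f \in \Omega_s^\Ba$, let $A_f := \{\varepsilon : \Rg(f\restriction J_\varepsilon) \subseteq h_s^\Ba(\varepsilon)\} \in \mathcal{D}$ and define a code $\psi_f : \kappa \to \chi$ by: $\psi_f(\varepsilon) = 0$ off $A_f \cap B_s$; on $A_f \cap B_s$ with $\cf(h_s^\Ba(\varepsilon))$ a limit, $\psi_f(\varepsilon)$ is the least $\zeta$ with $\Rg(f\restriction J_\varepsilon) \subseteq \alpha_\zeta^{s,\varepsilon}$ (well-defined by finiteness of $J_\varepsilon$); on $A_f \cap B_s$ with $h_s^\Ba(\varepsilon)$ a successor, $\psi_f(\varepsilon) = 0$ if $\max\Rg(f\restriction J_\varepsilon) < \gamma_\varepsilon$ and $\psi_f(\varepsilon) = 1$ otherwise. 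Partition $\Omega_s^\Ba$ by $f \sim f' \iff \psi_f = \psi_{f'}$; since $\chi^\kappa = \chi$ there are at most $\chi$ classes, which I would index by elements of $\chi$ to append to $\rho_s^\Ba$.

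For each class $C$ with code $\psi$, define $h_C \in \mathcal{H}$ by $h_C(\varepsilon) = h_s^\Ba(\varepsilon)$ off $B_s$, $h_C(\varepsilon) = \alpha_{\psi(\varepsilon)}^{s,\varepsilon}$ in the limit case, $h_C(\varepsilon) = \gamma_\varepsilon$ in the successor-with-$\psi(\varepsilon)=0$ case, and $h_C(\varepsilon) = h_s^\Ba(\varepsilon)$ in the successor-with-$\psi(\varepsilon)=1$ case. Approximation condition (1) then holds because, by construction, $A_f \subseteq \{\varepsilon : \Rg(f\restriction J_\varepsilon) \subseteq h_C(\varepsilon)\}$ for every $f \in C$, and $\trianglelefteq_g$-condition (3) holds because $h_C \leq h_s^\Ba$ pointwise. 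The key dichotomy for proposition clause (1) is: if $E_\psi := \{\varepsilon \in B_s : h_s^\Ba(\varepsilon) \text{ is a successor and } \psi(\varepsilon) = 1\}$ is empty, then $h_C(\varepsilon) < h_s^\Ba(\varepsilon)$ on all of $B_s \in \mathcal{D}$, giving strict decrease; if $E_\psi$ is non-empty, pick $\varepsilon_0 \in E_\psi$, so every $f \in C$ has $f(\max J_{\varepsilon_0}) = \gamma_{\varepsilon_0}$ (as $\psi_f(\varepsilon_0) = 1$ and $f$ is strictly increasing on $J_{\varepsilon_0}$), and Assumption \ref{A:R}(1) then forbids any $R$-edge inside $C$.

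Finally I would check that approximation condition (2) survives in $\Bb$: if distinct $t_1, t_2 \in S_\Bb$ have $\rho_{t_1}^\Bb = \rho_{t_2}^\Bb$ then necessarily $g(t_1) \neq g(t_2)$ (else the appended $\chi$-indices would force $t_1 = t_2$), and the equal prefixes $\rho_{g(t_1)}^\Ba = \rho_{g(t_2)}^\Ba$ trigger approximation (2) already in $\Ba$, forbidding $R$-edges between $\Omega_{g(t_1)}^\Ba \supseteq \Omega_{t_1}^\Bb$ and $\Omega_{g(t_2)}^\Ba \supseteq \Omega_{t_2}^\Bb$. The main obstacle is the successor sub-case $\cf(h_s^\Ba(\varepsilon)) = 1$, where no proper element of a cofinal sequence sits strictly below $h_s^\Ba(\varepsilon)$ while still bounding $f$ whenever $f$ attains the predecessor; the code $\psi_f$ is designed precisely to isolate this sub-case, so that the triviality argument via Assumption \ref{A:R}(1) can dispose of it.
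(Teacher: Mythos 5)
Your argument is correct, and it is recognisably a cousin of the paper's: both proofs fix a cofinal (or, in the successor case, a singleton-predecessor) sequence inside each $h^\Ba_s(\varepsilon)$ for the $\varepsilon$ of small positive cofinality, partition $\Omega^\Ba_s$ according to how each $f$ is bounded against these sequences, extract a new $h$ from that data, and kill the successor ``attains $h^\Ba_s(\varepsilon)-1$'' case by Assumption~\ref{A:R}(1). The genuine difference is the decomposition mechanism. The paper packages all candidate bounds into a family $H_s$ of size $\leq\chi$ (functions taking values in the chosen cofinal sets), enumerates it, slices $\Omega^\Ba_s$ inductively by the least index $\xi$ such that $h_{s,\xi}$ $\mathcal{D}$-bounds $f$, and puts everything that never $\mathcal{D}$-bounds into a single residual class $\Omega_{s,\xi_s}$; triviality of that residual class is argued by intersecting two $\mathcal{D}$-sets to find a coordinate where both functions attain $h^\Ba_s(\varepsilon)-1$. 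You instead compute for each $f$ a full pointwise code $\psi_f\in\chi^\kappa$ and slice by exact equality of codes, encoding the problematic successor case directly by the flag $\psi(\varepsilon)=1$. This buys two small things: it avoids the transfinite ``minus everything earlier'' sweep over $H_s$, and it makes the triviality argument more local, since a single coordinate $\varepsilon_0$ with $\psi(\varepsilon_0)=1$ already forces every $f\in C$ to agree at $\max J_{\varepsilon_0}$ (using $\varepsilon_0\in A_f$), with no need to intersect $\mathcal{D}$-sets. Everything else — the cardinality count via $\chi^\kappa=\chi$, the verification of Definition~\ref{D:def of approximation}(2) by reducing to $\Ba$ when $g(t_1)\ne g(t_2)$ and to distinctness of appended indices otherwise, and the pointwise inequality $h_C\le h^\Ba_s$ giving $\trianglelefteq_g$(3) — matches the paper.
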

\begin{proof}
We partition $S_\Ba$ in the following way. Let $S_1=\{s\in S_\Ba:  (\forall^{\mathcal{D}}\varepsilon<\kappa)(0<\cf(h^\Ba_s(\varepsilon))\leq \chi) \text{ and } \Omega^\Ba_s \text{ is non-trivial}\}$ and $S_0=S_\Ba\setminus S_1$ is the rest.

Fix any $s\in S_1$. For any $\varepsilon<\kappa$, if $0<\cf(h^\Ba_s(\varepsilon))\leq \chi$ we choose a an unbounded subset $C_{s,\varepsilon}\subseteq h^\Ba_s(\varepsilon)$ of order type $\cf(h^\Ba_s(\varepsilon))$, and we set $C_{s,\varepsilon}=\{h^\Ba_s(\varepsilon))\}$, otherwise.

Set $A_s=\{\varepsilon<\kappa: 0<\cf(h^\Ba_s(\varepsilon))\leq \chi\}$. Note that $A_s\in \mathcal{D}$.

Let $H_s=\{h\in \mathcal{H}: \text{if $\varepsilon\in A_s$ then $h(\varepsilon)\in C_{s,\varepsilon}$ and $h(\varepsilon)=h^\Ba_s(\varepsilon)$, otherwise}\}$. Since $\chi^{\kappa}=\chi$, $|H_s|\leq \chi$ and hence there is some $\xi_s\leq \chi$ and an enumeration $\langle h_{s,\xi}:\xi<\xi_s\rangle$ of $H_s$.

By induction on $\xi<\xi_s$ we define
\[\Omega_{s,\xi}=\{f\in \Omega^\Ba_s: (\forall^\mathcal{D}\varepsilon<\kappa)(\Rg(f\restriction J_\varepsilon)\subseteq h_{s,\xi}(\varepsilon))\}\setminus \bigcup_{\alpha<\xi}\Omega_{s,\alpha}\]
and for $\xi=\xi_s$ 
\[\Omega_{s,\xi}=\{f\in \Omega^\Ba_s:(\forall^\mathcal{D} \varepsilon<\kappa)(h^\Ba_s(\varepsilon) \text{ is a sucessor and }h^\Ba_s(\varepsilon)-1=\max\Rg(f\restriction J_\varepsilon)\}.\]
We claim that $\Omega^\Ba_s=\bigsqcup_{\xi\leq \xi_s}\Omega_{s,\xi}$. Let $f\in \Omega^\Ba_s$. Note that for every $\varepsilon\in A_s$ either (a) there an ordinal $\gamma\in C_{s,\varepsilon}$ such that $\Rg(f\restriction J_\varepsilon)\subseteq \gamma$ or (b) there is no such $\gamma$. We may find $A_s^\prime\subseteq A_s$ satisfying that $A_s^\prime\in\mathcal{D}$ and that for all $\varepsilon\in A_s^\prime$ (a) holds or that for all $\varepsilon\in A_s^\prime$ (b) holds.

Assume that (a) holds for all $\varepsilon\in A_s^\prime$ and let $\langle \gamma_\varepsilon : \varepsilon<A_s^\prime\rangle$ witness this. Define a function $h\in H_s$ by setting for all $\varepsilon\in A_s^\prime$, $h(\varepsilon)=\gamma_\varepsilon$. For $\varepsilon\notin A_s^\prime$ choose arbitrary $h(\varepsilon)$ as long as $h\in H_s$. Let $\xi<\xi_s$ be minimal such that $(\forall^\mathcal{D}\varepsilon<\kappa)(\Rg(f\restriction J_\varepsilon)\subseteq h_{s,\xi}(\varepsilon))$, so $f\in \Omega_{s,\xi}$.

Now, assume that (b) holds for all $\varepsilon\in A_s^\prime$. As $\Ba$ is an approximation (see Definition \ref{D:def of approximation}(1)) we may assume that for all $\varepsilon\in A_s^\prime$, $\Rg(f\restriction J_\varepsilon)\subseteq h^\Ba_s(\varepsilon)$ but we cannot find any $\gamma\in C_{s,\varepsilon}$ satisfying $\Rg(f\restriction J_\varepsilon)\subseteq \gamma$. For any $\varepsilon \in A_s^\prime$, because $J_\varepsilon$ is finite this implies that $\cf(h_s^{\Ba}(\varepsilon))=1$, i.e. that $h^\Ba_s(\varepsilon)$ is  a successor ordinal and that $h^\Ba_s(\varepsilon)-1=\max\Rg(f\restriction J_\varepsilon)$. Hence $f\in \Omega_{s,\xi_s}$.

Let $S_\Bb=\{(s,\xi):s\in S_1,\, \xi\leq \xi_s,\, \Omega_{s,\xi}\neq \emptyset\}\cup S_0$ and let $g:S_\Bb\to S_\Ba$ be the function defined by $g(s,\xi)=s$ for $s\in S_1$ and $g(s)=s$ otherwise. For any $s\in S_0$ let $\Omega^\Bb_s=\Omega^\Ba_s$, $h^\Bb_s=h^\Ba_s$ and $\rho^\Bb_s={\rho^\Ba_s}^\frown\langle 0\rangle $. For $s\in S_1$, if $\xi\leq \xi_s$ we set $\Omega^\Bb_{(s,\xi)}=\Omega_{s,\xi}$ and $\rho^\Bb_{(s,\xi)}={\rho^\Ba_s}^{\frown} \langle \xi\rangle$. Finally, for $\xi<\xi_s$ we set $h^\Bb_{(s,\xi)}=h_{s,\xi}$ and for $\xi=\xi_s$ we set $h^\Bb_{(s,\xi)}=h^\Ba_s$.

\begin{claim}
$\Bb$ is an approximation and $\Ba\trianglelefteq_g \Bb$.
\end{claim}
\begin{claimproof}
We first show that $\Bb$ is an approximation. Items $(1)$ and $(2)$ from the definition follow since $\Ba$ is an approximation and the construction above. For example, if $(s_1,\xi_1)\neq (s_2,\xi_2)\in S_\Bb$ and $\rho^\Bb_{(s_1,\xi_1)}=\rho^\Bb_{(s_2,\xi_2)}$ then since $\xi_1=\xi_2$ necessarily $s_1\neq s_2$ and $\rho^\Ba_{s_1}=\rho^\Ba_{s_2}$ so we may use the fact that $\Ba$ is an approximation.
%
%

Finally, $\Ba\trianglelefteq_g \Bb$ by the construction.
\end{claimproof}

Showing $(1)$ from the statement of the proposition boils down to showing that $\Omega^\Bb_{(s,\xi_s)}=\Omega_{s,\xi_s}$ is trivial. This follows from Assumption \ref{A:R}(1).
\end{proof}

\begin{proposition}\label{P:approx-large-cof}
Let $\Ba$ be an approximation. Then there exists an approximation $\Ba\trianglelefteq_g \Bb$ satisfying the following. 
\begin{enumerate}
\item If $t\in S_\Bb$ with $\{\varepsilon<\kappa: \cf(h^\Ba_{g(t)}(\varepsilon))>\chi\}\in \mathcal{D}$ then either $\Omega^\Bb_t$ is trivial or $\{\varepsilon<\kappa: h^\Bb_t(\varepsilon)<h^\Ba_{g(t)}(\varepsilon)\}\in \mathcal{D}$.
\item If $t\in S_\Bb$ satisfies that $\{\varepsilon<\kappa:\cf(h^\Ba_{g(t)}(\varepsilon))> \chi\}\notin \mathcal{D}$ then $h^\Bb_t=h^\Ba_{g(t)}$ and $\Omega^\Bb_t=\Omega^\Ba_{g(t)}$.
\end{enumerate}

Lastly, for $t\in S_\Bb$, if $\rho^\Ba_{g(t)}\in \chi^{\xi}$, with $\xi<\sigma$, then $\rho^\Bb_t\in \chi^{\xi+1}$.
\end{proposition}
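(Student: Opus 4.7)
The proof follows the blueprint of Proposition \ref{P:approx-small-cof}, with modifications forced by the large-cofinality regime; the main obstacle will be establishing triviality of the ``overflow'' piece.

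I would partition $S_\Ba = S_0 \sqcup S_1$ where $S_1 := \{s \in S_\Ba : A_s \in \mathcal{D}\}$ with $A_s := \{\varepsilon < \kappa : \cf(h^\Ba_s(\varepsilon)) > \chi\}$. For $s \in S_0$, set $\Omega^\Bb_s := \Omega^\Ba_s$, $h^\Bb_s := h^\Ba_s$, $\rho^\Bb_s := (\rho^\Ba_s)^\frown\langle 0\rangle$; clause (2) of the proposition is then immediate.

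For $s \in S_1$, use $\cf(h^\Ba_s(\varepsilon)) > \chi$ on $A_s$ to select, for each $\varepsilon \in A_s$, an increasing sequence $\langle \beta_{s,\varepsilon,\xi} : \xi < \chi\rangle$ of ordinals below $h^\Ba_s(\varepsilon)$ whose supremum $\delta_{s,\varepsilon}$ satisfies $\delta_{s,\varepsilon} < h^\Ba_s(\varepsilon)$ (the slack afforded by the large cofinality); for $\varepsilon \notin A_s$, set $\beta_{s,\varepsilon,\xi} := h^\Ba_s(\varepsilon)$. Let
\[ H_s := \{h \in \mathcal{H} : h(\varepsilon) \in \{\beta_{s,\varepsilon,\xi} : \xi < \chi\} \text{ for } \varepsilon \in A_s,\ h(\varepsilon) = h^\Ba_s(\varepsilon) \text{ otherwise}\}. \]
Since $\chi^\kappa = \chi$, $|H_s| \leq \chi$, so enumerate $H_s = \langle h_{s,\xi} : \xi < \xi_s\rangle$ with $\xi_s \leq \chi$. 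Define $\Omega_{s,\xi} := \{f \in \Omega^\Ba_s : \{\varepsilon : \Rg(f\restriction J_\varepsilon) \subseteq h_{s,\xi}(\varepsilon)\} \in \mathcal{D}\} \setminus \bigcup_{\alpha<\xi}\Omega_{s,\alpha}$ for $\xi < \xi_s$, and let $\Omega_{s,\xi_s} := \Omega^\Ba_s \setminus \bigcup_{\xi<\xi_s}\Omega_{s,\xi}$. The ``bounded'' pieces $\Omega_{s,\xi}$ with $\xi < \xi_s$ carry the strict bound $h_{s,\xi} <_\mathcal{D} h^\Ba_s$ on $A_s$, fulfilling clause (1) for these.

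The main obstacle is triviality of the overflow $\Omega_{s,\xi_s}$, which by construction consists of those $f$ with $\max \Rg(f\restriction J_\varepsilon) \geq \delta_{s,\varepsilon}$ on a $\mathcal{D}$-large set. In Proposition \ref{P:approx-small-cof} the overflow was pinned to a single value $h^\Ba_s(\varepsilon) - 1$ and triviality followed from Assumption \ref{A:R}(1) alone; here the ``top slab'' $[\delta_{s,\varepsilon}, h^\Ba_s(\varepsilon))$ itself has cofinality exceeding $\chi$, so (1) is insufficient. I would exploit the full interleaving structure of Assumption \ref{A:R}(2)--(3) --- which forces, whenever $f_1 \R f_2$, the $2|J_\varepsilon|$ values $\{f_1(i),f_2(i)\}_{i \in J_\varepsilon}$ to alternate as $f_1(i_0) < f_1(i_1) \leq f_2(i_0) < f_1(i_2) \leq f_2(i_1) < \cdots$ --- together with a judicious choice of $\delta_{s,\varepsilon}$ and of the cofinal sequence $\langle \beta_{s,\varepsilon,\xi}\rangle$ (with cofinality properties tuned to the growth of $|J_\varepsilon|$ along $\mathcal{D}$) to conclude that no two overflow functions can be $\R$-related.

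Having the triviality of the overflow, the assembly of $\Bb$ goes through as in Proposition \ref{P:approx-small-cof}: set $S_\Bb := S_0 \cup \{(s,\xi) : s \in S_1,\ \xi \leq \xi_s,\ \Omega_{s,\xi} \neq \emptyset\}$ with $g$ the obvious projection to $S_\Ba$, $\Omega^\Bb_{(s,\xi)} := \Omega_{s,\xi}$, $h^\Bb_{(s,\xi)} := h_{s,\xi}$ for $\xi < \xi_s$ and $h^\Bb_{(s,\xi_s)} := h^\Ba_s$, and $\rho^\Bb_{(s,\xi)} := (\rho^\Ba_s)^\frown\langle \xi\rangle$. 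The verification that $\Bb$ is an approximation and that $\Ba \trianglelefteq_g \Bb$, as well as the final assertion that $\rho$-lengths increase by exactly one coordinate, then proceeds exactly as in Proposition \ref{P:approx-small-cof}.
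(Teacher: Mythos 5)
Your proposal correctly identifies the crux of the argument (triviality of the overflow), but the scheme you set up cannot achieve it. By picking, for each $\varepsilon\in A_s$, a sequence $\langle\beta_{s,\varepsilon,\xi}:\xi<\chi\rangle$ of length $\chi$ with supremum $\delta_{s,\varepsilon}<h^\Ba_s(\varepsilon)$, you leave a ``top slab'' $[\delta_{s,\varepsilon},h^\Ba_s(\varepsilon))$ which has the \emph{same} cofinality $>\chi$ as $h^\Ba_s(\varepsilon)$ itself. Your overflow $\Omega_{s,\xi_s}$ consists precisely of those $f$ with $\{\varepsilon\in A_s:\max\Rg(f\restriction J_\varepsilon)\geq\delta_{s,\varepsilon}\}\in\mathcal{D}$, and nothing forces two such functions to be $\R$-unrelated: $\Omega$ is just all $J$-to-$\theta$ functions increasing on each $J_\varepsilon$, so one can have $f_1\R f_2$ with both taking all their $J_\varepsilon$-block values in the slab $[\delta_{s,\varepsilon},h^\Ba_s(\varepsilon))$ for every $\varepsilon$, satisfying Assumption \ref{A:R}(1)--(3). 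The interleaving conditions place no obstruction when the slab is that wide. ``Tuning cofinalities to $|J_\varepsilon|$'' does not help because the obstacle is not the lengths $|J_\varepsilon|$ (which are finite and bounded along any member of $\mathcal{D}$ in the relevant way) but the width of the residual slab; any choice of $\delta_{s,\varepsilon}<h^\Ba_s(\varepsilon)$ leaves an essentially equally large residue, so you have merely reproduced the original problem one level down. Since $h^\Bb_{(s,\xi_s)}=h^\Ba_s$ entails $\{\varepsilon:h^\Bb_{(s,\xi_s)}(\varepsilon)<h^\Ba_s(\varepsilon)\}=\emptyset$, clause (1) of the proposition forces $\Omega_{s,\xi_s}$ to be trivial; hence the gap is fatal, not cosmetic.

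The paper's argument circumvents this by \emph{not} truncating to length $\chi$. It chooses a genuinely cofinal $<_{\mathcal{D}_s}$-increasing sequence $\langle h_{s,\beta}:\beta<\beta_s\rangle$ in the ultraproduct $\prod_{\varepsilon\in A_s}h^\Ba_s(\varepsilon)/\mathcal{D}_s$ (which exists regardless of cofinality), so that \emph{every} $f\in\Omega^\Ba_s$ is eventually dominated by some $h_{s,\beta}$; there is no overflow. Since $\beta_s$ may exceed $\chi$, the ordinal $\beta$ cannot be recorded in $\rho$. The key idea you are missing is a secondary decomposition: for each $f$ one records the minimal $\beta_s(f)$ for which, past finitely many elements of each $J_\varepsilon$ ($\mathcal{D}$-often), $f$ is below $h_{s,\beta}$, together with the exact finite excess $n_s(f)<\omega$. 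The partition is by $(\beta_s(f),n_s(f))$, but $\rho$ records \emph{only} $n_s(f)<\omega$; pieces with equal $n$ but distinct $\beta$ are proved $\R$-unrelated via Assumption \ref{A:R}(1) and (3) (comparing the $(n+1)$-th and $(n+3)$-th elements of $J_\varepsilon$ from the end), and pieces with $n>0$ are shown trivial via Assumption \ref{A:R}(2). This $(\beta,n)$-decomposition with $\rho$ depending only on $n$ is the ingredient your proposal lacks and cannot be patched into the $\chi$-truncated scheme.
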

\begin{proof}
Let $S_1=\{s\in S_\Ba: \Omega^\Ba_s \text{ is non-trivial and } (\forall^{\mathcal{D}} \varepsilon<\kappa)(\cf(h^\Ba_s(\varepsilon))>\chi\}$ and $S_0=S_\Ba\setminus S_1$. Fix any $s\in S_1$. Let $A_s=\{\varepsilon<\kappa: \cf(h^\Ba_s(\varepsilon))>\chi\}$, so $A_s\in \mathcal{D}$.

Let $\mathcal{D}_s=\{D\in \mathcal{D}: D\subseteq A_s\}$ be the induced ultrafilter on $A_s$. Consider the ultraproduct $\prod_{\varepsilon\in A_s}h^\Ba_s(\varepsilon)/\mathcal{D}_s$. We may consider it as a linearly ordered set, ordered by $<_{\mathcal{D}_s}$.

\begin{claim}\label{C: sequence in ultraproduct}
There exists a sequence $H_s=\langle h_{s,\beta}\in \mathcal{H}:\beta<\beta_s\rangle$ satisfying
\begin{enumerate}
\item for all $\varepsilon\in A_s$ and $\beta<\beta_s$, $h_{s,\beta}(\varepsilon)<h^\Ba_s(\varepsilon)$;
\item for all $\varepsilon\in \kappa\setminus A_s$ and $\beta<\beta_s$, $h_{s,\beta}(\varepsilon)=h^\Ba_s(\varepsilon)$;
\item $\langle (h_{s,\beta}\restriction A_s)/\mathcal{D}_s:\beta<\beta_s\rangle$ is $<_{\mathcal{D}_s}$ increasing and cofinal in  $\prod_{\varepsilon\in A_s}h^\Ba_s(\varepsilon)/\mathcal{D}_s$.
\item for any $f\in \Omega^\Ba_s$ there exists $\beta<\beta_s$ such that $\{\varepsilon<\kappa: \Rg(f\restriction J_\varepsilon)\subseteq h_{s,\beta}(\varepsilon)\}\in\mathcal{D}$.
\end{enumerate}
\end{claim}
\begin{claimproof}
First we choose a well-ordered increasing cofinal sequence in $\prod_{\varepsilon\in A_s}h^\Ba_s(\varepsilon)/\mathcal{D}_s$ and then choose a sequence of representatives  $\langle h_{s,\beta}\restriction A_s:\beta<\beta_s\rangle$. To get (2), set $h_{s,\beta}(\varepsilon)=h_s^{\Ba}(\varepsilon)$ for any $\varepsilon\in \kappa\setminus A_s$. This gives us (1)--(3).

We show (4). Let $f\in \Omega_s^\Ba$. Since $\Ba$ is an approximation, the set $X_{s,f}=\{\varepsilon\in A_s: \Rg(f\restriction J_\varepsilon)\subseteq h^{\Ba}_s(\varepsilon)\}$ is in $\mathcal{D}$. Let $h_f: A_s\to \text{Ord}$ be the function defined by mapping $\varepsilon\in X_{s,f}$ to $\max \Rg(f\restriction J_\varepsilon)+1$ and $\varepsilon\in A_s\setminus X_{s,f}$ to $0$. Note that for any $\varepsilon\in A_s$, $h_f(\varepsilon)<h_s^{\Ba}(\varepsilon)$. Indeed, if for $\varepsilon\in X_{s,f}$, $h_f(\varepsilon)=h_s^{\Ba}(\varepsilon)$ then $h_s^{\Ba}(\varepsilon)$ is a successor contradicting $\varepsilon\in A_s$. Similarly (and even easier), this holds if $\varepsilon\in A_s\setminus X_{s,f}$. It follows that for some $\beta<\beta_s$, $h_f/\mathcal{D}_s\leq_{\mathcal{D}_s} (h_{s,\beta}\restriction A_s)/\mathcal{D}_s$ and it is easy to check that this $\beta$ satisfies (4).
\end{claimproof}

For any $f\in \Omega^\Ba_s$, $n<\omega$ and $\beta<\beta_s$ let $B_n(f,h_{s,\beta})=\{\varepsilon<\kappa: |\{i\in J_\varepsilon: f(i)\geq h_{s,\beta}(\varepsilon)\}|\leq n\}$. By Claim \ref{C: sequence in ultraproduct}(4), we may set $\beta_{s,n}(f)=\min \{\beta: B_n(f,h_{s,\beta})\in \mathcal{D}\}$.  Note that $\beta_{s,n}(f)\geq \beta_{s,n+1}(f)$. Let $\beta_s(f)=\min \{\beta_{s,n}(f):n<\omega\}$ and let $n_s(f)=\min \{n<\omega: (\forall k\geq n)(\beta_{s,k}(f)=\beta_{s,n}(f)\}$.

\begin{claim}\label{C:=n_s(f)}
$\{\varepsilon<\kappa: |\{i\in J_\varepsilon: f(i)\geq h_{s,\beta_s(f)}(\varepsilon)\}|=n_s(f)\}\in \mathcal{D}$.
\end{claim}
\begin{claimproof}
Call this set $Y_{s,f}$. Note that $Y_{s,f}\subseteq B_{n_s(f)}(f,h_{s,\beta_s(f)})$.

If $n_s(f)=0$ then $Y_{s,f}=B_0(f,h_{s,\beta_{s,0}(f)})\in \mathcal{D}$. Assume $n_s(f)>0$. If $Y_{s,f}\notin \mathcal{D}$ then $\{\varepsilon<\kappa: |\{i\in J_\varepsilon: f(i)\geq h_{s,\beta_s(f)}(\varepsilon)\}|\leq n_s(f)-1\}\in \mathcal{D}$. So $\beta_{s,n_s(f)-1}\leq \beta_s(f)=\beta_{s,n_s(f)}(f)$, contradiction.
\end{claimproof}

For $s\in S_1$, $\beta<\beta_s$ and $n<\omega$, let $\Omega_{(s,\beta,n)}=\{f\in \Omega^\Ba_s: \beta_s(f)=\beta,\, n_s(f)=n\}$.
Let $S_\Bb=\{(s,\beta,n):s\in S_1,\, \beta< \beta_s,\, n<\omega,\, \Omega_{(s,\beta,n)}\neq\emptyset \}\cup S_0$ and let $g:S_\Bb\to S_\Ba$ be the function defined by $g(s,\beta,n)=s$ for $s\in S_1$ and $g(s)=s$ otherwise. For any $s\in S_0$ let $\Omega^\Bb_s=\Omega^\Ba_s$, $h^\Bb_s=h^\Ba_s$ and $\rho^\Bb_s={\rho^\Ba_s}^\frown\langle 0\rangle $. For $s\in S_1$, $\beta<\beta_s$ and $n<\omega$, we set $\Omega^\Bb_{(s,\beta,n)}=\Omega_{(s,\beta,n)}$, $\rho^\Bb_{(s,\beta,n)}={\rho^\Ba_s}^\frown \langle n\rangle$ and 
\[
h^\Bb_{(s,\beta,n)}=
\begin{cases}
h_{s,\beta} & n=0 \\
h^\Ba_s & n>0
\end{cases}
.\]

\begin{claim}
$\Bb$ is an approximation and $\Ba\trianglelefteq_g \Bb$.
\end{claim}
\begin{claimproof}
We check that $\Bb$ satisfies $(1)$ and $(2)$ from the definition. $(1)$ follows by the choice of $h^\Bb_{(s,\beta,n)}$.

We are left with $(2)$. Let $t\in S_0$ and $(s,\beta,n)$ with $s\in S_1$. If $\rho^\Bb_t=\rho^\Bb_{(s,\beta,n)}$ then $\rho^\Ba_t=\rho^\Ba_s$ so the result follows since $\Ba$ is an approximation. Let $(s_1,\beta_1,n_1)\neq (s_2,\beta_2,n_2)\in S_\Bb$. If $\rho^\Bb_{(s_1,\beta_1,n_1)}=\rho^\Bb_{(s_2,\beta_2,n_2)}$ then $\rho^\Ba_{s_1}=\rho^\Ba_{s_2}$. If $s_1\neq s_2$ then the results follows since $\Ba$ is an approximation. So assume that $s=s_1=s_2$ and $n=n_1=n_2$. Assume that $\beta_1<\beta_2<\beta_s$ and let $f_1\in \Omega^\Bb_{(s,\beta_1,n)}$ and $f_2\in \Omega^\Bb_{(s_,\beta_2,n)}$. We need to show that $f_1\not\R f_2$ and $f_2\not\R f_1$.

By choice of $\beta_1=\beta_s(f_1)$ and $n=n_s(f_1)$, $B_n(f_1,h_{s,\beta_1})\in \mathcal{D}$. On the other hand, since $\beta_1<\beta_2=\beta_s(f_2)\leq \beta_{s,n+2}(f_2)$, $B_{n+2}(f_2,h_{s,\beta_1})\notin \mathcal{D}$. I.e. $\kappa\setminus B_{n+2}(f_2,h_{s,\beta_1})\in \mathcal{D}$. Let $\varepsilon\in B_n(f_1,h_{s,\beta_1})\cap (\kappa\setminus B_{n+2}(f_2,h_{s,\beta_1}))$ and let $i_l$ be the $(n+l)$-th element of $J_\varepsilon$ from the end, for $l=1,2,3$.
As a result,
\[f_1(i_3)<f_1(i_1)<h_{s,\beta_1}(\varepsilon)\leq f_2(i_3).\] Consequently, $f_1\not\R f_2$ by Assumption \ref{A:R}(3) (since $\suc{\suc{i_3}}=i_1$) and $f_2\not\R f_1$ by Assumption \ref{A:R}(1).

$\Ba\trianglelefteq_g \Bb$ by construction.
\end{claimproof}

To complete the proof, we note that for $(s,\beta,n)\in S_\Bb$ with $n>0$, $\Omega^\Bb_{(s,\beta,n)}$ is trivial. Let $f_1,f_2\in \Omega^\Bb_{(s,\beta,n)}$. By Claim \ref{C:=n_s(f)}, and the assumptions on $\mathcal{D}$, we may find $\varepsilon<\kappa$ such that for $l=1,2$ the following holds
\begin{enumerate}
\item $|\{i\in J_\varepsilon: f_l(i)\geq h_{s,\beta}(\varepsilon)\}|=n$ and
\item $|J_\varepsilon|>n$.
\end{enumerate}
Since $n>0$, letting $i$ be the $n+1$-th element from the end of $J_\varepsilon$ we have that $f_l(i)<h_{s,\beta}(\varepsilon)\leq f_l(\suc{i})$, for $l=1,2$. If $f_1\R f_2$ then $f_1(\suc{i})\leq f_2(i)<h_{s,\beta}(\varepsilon)$ by Assumption \ref{A:R}(2), contradiction.

\end{proof}

\begin{proposition}\label{P:approximation,proper-successor}
Let $\Ba$ be an approximation.  Then there exists an approximation $\Bc$ and a surjective function $r:S_\Bc\to S_\Ba$ such that $\Ba \triangleleft_r \Bc$.
Moreover, for $t\in S_\Bc$, if $\rho^\Ba_{r(t)}\in \chi^{\xi}$, with $\xi<\sigma$, then $\rho^\Bc_t\in \chi^{\xi+2}$.
\end{proposition}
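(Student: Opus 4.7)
The plan is to obtain $\Bc$ by applying Propositions~\ref{P:approx-small-cof} and \ref{P:approx-large-cof} in succession, then checking via Lemma~\ref{L:composition-approx} that the composition gives the \emph{strict} refinement $\triangleleft_r$ and not just $\trianglelefteq_r$. Concretely, first I would apply Proposition~\ref{P:approx-small-cof} to $\Ba$ to produce an approximation $\Bb$ with $\Ba \trianglelefteq_g \Bb$ that strictly decreases $h$ on the ``small cofinality'' part, extending each $\rho$ by one coordinate (so $\rho^\Bb_t \in \chi^{\xi+1}$ whenever $\rho^\Ba_{g(t)} \in \chi^{\xi}$). Then I would apply Proposition~\ref{P:approx-large-cof} to $\Bb$ to produce $\Bc$ with $\Bb \trianglelefteq_h \Bc$ that strictly decreases $h$ on the ``large cofinality'' part, again extending $\rho$ by one coordinate. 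Setting $r = g \circ h$, Lemma~\ref{L:composition-approx} immediately gives $\Ba \trianglelefteq_r \Bc$, and the $\rho$-length bound $\rho^\Bc_t \in \chi^{\xi+2}$ is immediate from the two length increments.

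The core of the proof is verifying strictness, i.e.\ that for each $t \in S_\Bc$ with $\Omega^\Bc_t$ non-trivial, $\{\varepsilon<\kappa : h^\Bc_t(\varepsilon) < h^\Ba_{r(t)}(\varepsilon)\} \in \mathcal{D}$. Write $s = h(t)$ and $u = g(s) = r(t)$. I would split by cases on the cofinality behavior of $h^\Ba_u$. If $\Omega^\Ba_u$ is trivial, both $g^{-1}(u)$ and $h^{-1}(s)$ are singletons copying $\Omega$ and $h$ verbatim (by item~(2) of Definition~\ref{D:order on approx}), so $\Omega^\Bc_t$ is trivial and there is nothing to check. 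Otherwise, if $\{\varepsilon : 0<\cf(h^\Ba_u(\varepsilon))\leq \chi\} \in \mathcal{D}$, Proposition~\ref{P:approx-small-cof}(1) gives either $\Omega^\Bb_s$ trivial (forcing $\Omega^\Bc_t$ trivial) or $h^\Bb_s < h^\Ba_u$ $\mathcal{D}$-a.e.; combining with $h^\Bc_t \leq h^\Bb_s$ $\mathcal{D}$-a.e.\ from condition~(3) of $\trianglelefteq_h$ yields the desired strict inequality. The symmetric large-cofinality case uses Proposition~\ref{P:approx-small-cof}(2) to obtain $h^\Bb_s = h^\Ba_u$ and then Proposition~\ref{P:approx-large-cof}(1) to shrink strictly at the $\Bc$-level.

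The one step requiring a small extra argument is the remaining case where \emph{both} the small-cofinality condition fails at $u$ and the large-cofinality condition fails at $s$. Using the ultrafilter property of $\mathcal{D}$, these two failures combine to force $\{\varepsilon : \cf(h^\Ba_u(\varepsilon)) = 0\} = \{\varepsilon : h^\Ba_u(\varepsilon)=0\} \in \mathcal{D}$. But the generator $\{\varepsilon : |J_\varepsilon| \geq 1\}$ of $\mathcal{D}$ and Definition~\ref{D:def of approximation}(1) together imply that any $f \in \Omega^\Ba_u$ would need $f\restriction J_\varepsilon$ to be non-empty with range in $\emptyset$ on a $\mathcal{D}$-large set, which is absurd. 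Hence $\Omega^\Ba_u = \emptyset$, contradicting $u \in S_\Ba$. This residual case is the only real obstacle, and it is handled by the ultrafilter book-keeping above.
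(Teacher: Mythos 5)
Your proposal is correct and follows essentially the same strategy as the paper: compose Propositions~\ref{P:approx-small-cof} and \ref{P:approx-large-cof}, use Lemma~\ref{L:composition-approx} for $\trianglelefteq_r$, and split on the cofinality of $h^\Ba_{r(t)}$ to upgrade to $\triangleleft_r$, using Definition~\ref{D:def of approximation}(1) together with $\sup_\varepsilon|J_\varepsilon|=\omega$ to rule out the $\cf=0$ case. The only cosmetic difference is that the paper excludes the $\cf=0$ case at the outset, whereas you handle it as a residual case at the end -- the underlying argument is identical.
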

\begin{proof}
Let $\Ba\trianglelefteq_g \Bb$ be the approximation supplied by Proposition \ref{P:approx-small-cof} and let $\Bb\trianglelefteq_f \Bc$ be the approximation supplied by Proposition \ref{P:approx-large-cof}.
Note that $\Ba\trianglelefteq_{g\circ f}\Bc$ by Lemma \ref{L:composition-approx}, we claim that $\Ba \triangleleft_{g\circ f} \Bc$.

Let $t\in S_\Bc$. Since $\Ba$ is an approximation and $\sup_{\varepsilon <\kappa} |J_\varepsilon|=\omega$, we cannot have that $\{\varepsilon<\kappa: \cf(h_{gf(t)}^\Ba(\varepsilon))=0\}\in \mathcal{D}$, see Definition \ref{D:def of approximation}(1). 

Assume that $\{\varepsilon<\kappa: 0<\cf(h_{gf(t)}^\Ba(\varepsilon))\leq \chi\}\in \mathcal{D}$. If $\Omega^\Ba_{gf(t)}$ is trivial then so is $\Omega^\Bc_t$, so assume not. By Proposition \ref{P:approx-small-cof}(1) applied to $f(t)\in S_\Bb$, either $\Omega_{f(t)}^\Bb$ is trivial (and thus so is $\Omega^\Bc_t$) or $\{\varepsilon<\kappa: h_{f(t)}^\Bb<h^\Ba_{gf(t)}(\varepsilon)\}\in \mathcal{D}$. If it is the latter then, since $\{\varepsilon<\kappa: h^\Bc_t(\varepsilon)\leq h^\Bb_{f(t)}(\varepsilon)\}\in \mathcal{D}$, we conclude that $\{\varepsilon<\kappa:h^\Bc_t(\varepsilon)<h^\Ba_{gf(t)}(\varepsilon)\}\in \mathcal{D}$.

Assume that $\{\varepsilon<\kappa:\cf(h^\Ba_{gf(t)}(\varepsilon))>\chi\}\in \mathcal{D}$. In particular, since by Proposition \ref{P:approx-small-cof}(2), $h^\Ba_{gf(t)}=h^\Bb_{f(t)}$, $\{\varepsilon<\kappa:\cf(h^\Bb_{f(t)}(\varepsilon))>\chi\}\in \mathcal{D}$. Assuming that $\Omega^\Bc_{t}$ is not trivial, the by Proposition \ref{P:approx-large-cof}(1), $\{\varepsilon<\kappa: h^\Bc_t(\varepsilon)<h^\Bb_{f(t)}(\varepsilon)\}\in \mathcal{D}$. Since $\{\varepsilon<\kappa: h^\Bb_{f(t)}(\varepsilon)\leq h^\Ba_{gf(t)}(\varepsilon)\}\in \mathcal{D}$ we conclude that $\{\varepsilon<\kappa:h^\Bc_t(\varepsilon)<h^\Ba_{gf(t)}(\varepsilon)\}\in \mathcal{D}$, as needed.

The moreover part follows immediately from the construction.
\end{proof}

\begin{lemma}\label{L:approx-limit}
Let $\delta<\sigma$ be a limit ordinal and $\langle \Ba_\alpha:\alpha<\delta\rangle$ a sequence of approximations. Assume that for $\alpha<\beta<\delta$, $\Ba_\alpha\trianglelefteq_{g_{\alpha,\beta}}\Ba_\beta$ and that for $\alpha<\beta<\gamma<\delta$, $g_{\alpha,\beta}\circ g_{\beta,\gamma}=g_{\alpha,\gamma}$. Then the inverse limit exists, i.e. there are $(\Ba_\delta, \langle g_{\alpha,\delta}: \alpha<\delta\rangle)$ such that $\Ba_\alpha\trianglelefteq_{g_{\alpha,\delta}}\Ba_\delta$ and for $\alpha<\beta<\delta$, $g_{\alpha,\beta}\circ g_{\beta,\delta}=g_{\alpha,\delta}$. 

In particular, if $\Ba_\alpha \triangleleft_{g_{\alpha,\beta}}\Ba_\beta$ for some $\alpha<\beta<\delta$ then $\Ba_\alpha \triangleleft_{g_{\alpha,\delta}}\Ba_\delta$.

Furthermore, for any $t\in S_{\Ba_\delta}$, if $\xi=\sup\{\zeta: \rho^{\Ba_\alpha}_{g_{\alpha,\delta}(t)}\in \chi^\zeta,\, \alpha<\delta\}$ then $\rho^{\Ba_\delta}_t\in \chi^{\xi+1}$.
\end{lemma}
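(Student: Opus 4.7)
The plan is to take $\Ba_\delta$ to be the natural inverse limit of the system. Declare $S_{\Ba_\delta}$ to be the set of compatible threads $t = \langle s_\alpha\rangle_{\alpha<\delta}$, meaning $s_\alpha \in S_{\Ba_\alpha}$ with $g_{\alpha,\beta}(s_\beta) = s_\alpha$ for $\alpha<\beta<\delta$, whose intersection $\Omega^{\Ba_\delta}_t := \bigcap_{\alpha<\delta}\Omega^{\Ba_\alpha}_{s_\alpha}$ is non-empty; and set $g_{\alpha,\delta}(t) = s_\alpha$. Each $f\in\Omega$ determines a unique such thread via the $\Ba_\alpha$-partitions together with compatibility of the $g$'s, so the $\Omega^{\Ba_\delta}_t$'s partition $\Omega$. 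For labels, set $\rho^{\Ba_\delta}_t = \bigl(\bigcup_{\alpha<\delta}\rho^{\Ba_\alpha}_{s_\alpha}\bigr){}^\frown\langle 0\rangle$; by condition (4) of Definition \ref{D:order on approx} the union is a coherent sequence of length $\xi = \sup_{\alpha<\delta}|\rho^{\Ba_\alpha}_{s_\alpha}|$, and $\xi<\sigma$ by regularity of $\sigma = (2^\kappa)^+$, so $\rho^{\Ba_\delta}_t\in\chi^{\xi+1}$, giving the furthermore clause.

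For $h^{\Ba_\delta}_t$ I would split into two cases. If $\Omega^{\Ba_{\alpha_0}}_{s_{\alpha_0}(t)}$ is trivial for some $\alpha_0<\delta$, condition (2) of Definition \ref{D:order on approx} forces stabilization of both $\Omega^{\Ba_\alpha}_{s_\alpha(t)}$ and $h^{\Ba_\alpha}_{s_\alpha(t)}$ for $\alpha\geq \alpha_0$, so I take $h^{\Ba_\delta}_t = h^{\Ba_{\alpha_0}}_{s_{\alpha_0}(t)}$; condition (1) of Definition \ref{D:def of approximation} and condition (3) of Definition \ref{D:order on approx} then hold trivially, and $\Omega^{\Ba_\delta}_t$ is itself trivial. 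If instead $\Omega^{\Ba_\alpha}_{s_\alpha(t)}$ is non-trivial for all $\alpha<\delta$, set $h^{\Ba_\delta}_t(\varepsilon) = \inf_{\alpha<\delta}h^{\Ba_\alpha}_{s_\alpha(t)}(\varepsilon)$ pointwise, a well-defined ordinal; this delivers condition (3) of the order on all of $\kappa$.

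With these choices most verifications are immediate. Conditions (1), (2), (4) of Definition \ref{D:order on approx} are built into the construction, and condition (2) of Definition \ref{D:def of approximation} is inherited: distinct threads $t\neq t'$ with $\rho^{\Ba_\delta}_t = \rho^{\Ba_\delta}_{t'}$ have some $\alpha$ with $s_\alpha(t)\neq s_\alpha(t')$ and necessarily $\rho^{\Ba_\alpha}_{s_\alpha(t)} = \rho^{\Ba_\alpha}_{s_\alpha(t')}$ (both being the initial segment of the common $\rho^{\Ba_\delta}$ of matching lengths), so condition (2) of Definition \ref{D:def of approximation} for $\Ba_\alpha$ forbids $R$-edges between $\Omega^{\Ba_\alpha}_{s_\alpha(t)}\supseteq\Omega^{\Ba_\delta}_t$ and $\Omega^{\Ba_\alpha}_{s_\alpha(t')}\supseteq\Omega^{\Ba_\delta}_{t'}$. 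The hard part will be verifying condition (1) of Definition \ref{D:def of approximation} in the non-trivial case: for $f\in\Omega^{\Ba_\delta}_t$ one needs $\bigcap_{\alpha<\delta}\{\varepsilon:\max\Rg(f\restriction J_\varepsilon) < h^{\Ba_\alpha}_{s_\alpha(t)}(\varepsilon)\}\in\mathcal D$, an intersection of $|\delta|$-many sets each in $\mathcal D$, which is not automatic from the bare ultrafilter hypothesis. My plan is to exploit the well-ordering of ordinals together with chain condition (3): for each fixed $\varepsilon$, the sequence $\alpha\mapsto h^{\Ba_\alpha}_{s_\alpha(t)}(\varepsilon)$, once restricted to a suitable $\mathcal D$-large set provided by successive comparisons, is weakly decreasing and hence stabilizes pointwise at some stage $\alpha^*(\varepsilon)<\delta$, and the individual boundedness of $f$ at that stage dominates the infimum.

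Finally, the moreover clause is immediate: given $\alpha<\beta<\delta$ with $\Ba_\alpha\triangleleft_{g_{\alpha,\beta}}\Ba_\beta$, for $t\in S_{\Ba_\delta}$ either $\Omega^{\Ba_\beta}_{g_{\beta,\delta}(t)}\supseteq\Omega^{\Ba_\delta}_t$ is trivial (hence so is $\Omega^{\Ba_\delta}_t$), or $\{\varepsilon: h^{\Ba_\beta}_{g_{\beta,\delta}(t)}(\varepsilon) < h^{\Ba_\alpha}_{g_{\alpha,\delta}(t)}(\varepsilon)\}\in\mathcal D$; combining the latter with $h^{\Ba_\delta}_t\leq h^{\Ba_\beta}_{g_{\beta,\delta}(t)}$ pointwise yields $\{\varepsilon: h^{\Ba_\delta}_t(\varepsilon) < h^{\Ba_\alpha}_{g_{\alpha,\delta}(t)}(\varepsilon)\}\in\mathcal D$, as required.
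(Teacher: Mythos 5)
Your proposal runs into a genuine gap exactly where you flagged the difficulty, and I don't think your outlined remedy can be made to work. Setting $h^{\Ba_\delta}_t(\varepsilon)=\inf_{\alpha<\delta}h^{\Ba_\alpha}_{s_\alpha(t)}(\varepsilon)$ requires, for Definition \ref{D:def of approximation}(1), that the intersection $\bigcap_{\alpha<\delta}\{\varepsilon:\Rg(f\restriction J_\varepsilon)\subseteq h^{\Ba_\alpha}_{s_\alpha(t)}(\varepsilon)\}$ be in $\mathcal D$. Each factor is in $\mathcal D$, but $\mathcal D$ is merely an ultrafilter on $\kappa$, not $|\delta|^+$-complete, so this intersection can be $\mathcal D$-small. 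Your stabilization argument does not rescue this: Definition \ref{D:order on approx}(3) only gives a $\mathcal D$-large set of $\varepsilon$ per comparison $\alpha<\beta$, not monotonicity of $\alpha\mapsto h^{\Ba_\alpha}_{s_\alpha(t)}(\varepsilon)$ at a single $\varepsilon$, and even if you pass to a $\mathcal D$-large set of ``good'' $\varepsilon$ per pair, you are again facing an intersection of $|\delta|$-many $\mathcal D$-large sets. Nothing forces a single $\varepsilon$ at which $f$ sits below every $h^{\Ba_\alpha}_{s_\alpha(t)}(\varepsilon)$; the exceptional $\mathcal D$-small sets can cover all of $\kappa$. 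Well-orderedness of ordinals gives stabilization of a \emph{decreasing} sequence, but you do not have pointwise decrease, and boundedness of $f$ at the stabilization stage only holds at $\mathcal D$-many $\varepsilon$ depending on that stage.

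The paper's resolution is precisely to abandon the hope of a single $h$ per thread. Instead, fixing a non-trivial thread $t$, it sets $A_{t,\varepsilon}=\{h^{\Ba_\alpha}_{t(\alpha)}(\varepsilon):\alpha<\delta\}$ (a set of size $\le|\delta|<\sigma\le\chi$) and assigns to each $f\in\Omega_t$ its ``height profile'' $h/\mathcal D\in\prod_{\varepsilon<\kappa}A_{t,\varepsilon}/\mathcal D$, where $h(\varepsilon)=\min\{x\in A_{t,\varepsilon}:\Rg(f\restriction J_\varepsilon)\subseteq x\}$ at $\mathcal D$-many $\varepsilon$; this minimum exists $\mathcal D$-often because $\Ba_0$ already bounds $f$ at $\mathcal D$-many $\varepsilon$. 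The thread-cell $\Omega_t$ is then further partitioned into $\le\chi^\kappa=\chi$ pieces $\Omega_{(t,h/\mathcal D)}$, each carrying the representative $h$ as its height function. Condition (1) of Definition \ref{D:def of approximation} then holds on each piece tautologically, and condition (3) of Definition \ref{D:order on approx} uses that $h(\varepsilon)$ is a minimum over $A_{t,\varepsilon}$, which contains $h^{\Ba_\alpha}_{t(\alpha)}(\varepsilon)$. The label $\rho^{\Ba_\delta}_{(t,h/\mathcal D)}$ then has to encode not only the thread (via the union you described) but also the index of $h/\mathcal D$ in an enumeration of the ultraproduct, to secure condition (2); appending just $\langle 0\rangle$ fails to separate two cells of $\Omega_t$ that differ only in $h/\mathcal D$. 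The rest of your checks (compatibility of threads, the ``moreover'' clause, $\xi<\sigma$ by regularity of $\sigma=(2^\kappa)^+$) are fine and match the paper's; what is missing is this further $\chi$-sized refinement of each thread-cell, which is the actual content of the lemma.
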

\begin{proof}
For every $f\in \Omega$ let $t_f\in \prod_{\alpha<\delta}S_{\Ba_\alpha}$ be the function defined by $t_f(\alpha)=s$ if and only if $f\in \Omega^{\Ba_\alpha}_s$. Note that for $\alpha<\beta<\delta$, since for any $s\in S_{\Ba_\alpha}$, $\{\Omega^{\Ba_\beta}_t:t\in g_{\alpha,\beta}^{-1}(s)\}$ is a partition of $\Omega^{\Ba_\alpha}_s$, necessarily $g_{\alpha,\beta}(t_f(\beta))=t_f(\alpha)$.

Let $S_*=\{t_f: f\in \Omega\}$ and for any $t\in S_*$, let $\Omega_t=\{f\in \Omega: t_f=t\}$. Clearly, it is a partition of $\Omega$. Furthermore, note that if $t_1,t_2\in S_*$ and $\alpha<\delta$ is such that $t_1(\alpha)=t_2(\alpha)$ then $t_1(\alpha^\prime)=t_2(\alpha^\prime)$ for any $\alpha^\prime \leq\alpha$. 

Let $S_0=\{t\in S_*: (\exists \alpha <\delta) (\Omega^{\Ba_\alpha}_{t(\alpha)} \text{ is trivial})\}$ and $S_1=S_*\setminus S_0$. 

For any $t\in S_1$ and $\varepsilon<\kappa$, let $A_{t,\varepsilon}=\{h^{\Ba_\alpha}_{t(\alpha)}(\varepsilon):\alpha<\delta\}$. Obviously, $1\leq |A_{t,\varepsilon}|\leq |\delta|<\sigma\leq\chi$.

For every $(t,h/\mathcal{D})\in S_1\times\prod_{\varepsilon<\kappa} A_{t,\varepsilon}/\mathcal{D} $ let $\Omega_{(t,h/\mathcal{D})}=\{f\in \Omega_t: \forall^\mathcal{D} \varepsilon<\kappa,\, h(\varepsilon)=\min \{x\in A_{t,\varepsilon}:\Rg(f\restriction J_\varepsilon)\subseteq x\}\}$. Let $S_{\Ba_{\delta}}=\{(t,h/\mathcal{D})\in S_1\times\prod_{\varepsilon<\kappa} A_{t,\varepsilon}/\mathcal{D}: \Omega_{(t,h/\mathcal{D})}\neq \emptyset\}\cup S_0$. For $t\in S_0$, set $\Omega^{\Ba_\delta}_t=\Omega^{\Ba_\alpha}_{t(\alpha)}$ and $h^{\Ba_\delta}_t=h^{\Ba_\alpha}_{t(\alpha)}$, where $\alpha<\delta$ is minimal such that $\Omega^{\Ba_\alpha}_{t(\alpha)}$ is trivial. For $(t,h/\mathcal{D})\in S_{\Ba_\delta}\setminus S_0$, set $\Omega^{\Ba_\delta}_{(t,h/\mathcal{D})}=\Omega_{(t,h/\mathcal{D})}$. 

Note that if $t\in S_1$ then $\{\varepsilon<\kappa: \exists x\in A_{t,\varepsilon}( \Rg(f\restriction J_\varepsilon)\subseteq x)\}\in \mathcal{D}$ because this set contains $\{\varepsilon<\kappa: \Rg(f\restriction J_\varepsilon)\subseteq h^{\Ba_0}_{t(0)}(\varepsilon)\}$, which is $\mathcal{D}$ since $\Ba_0$ is an approximation and $f\in \Omega^{\Ba_0}_{t(0)}$. Thus for any $t\in S_1$ and for every $f\in \Omega_t$ there is a unique $h/\mathcal{D}\in \prod_{\varepsilon<\kappa}A_{t,\varepsilon}/\mathcal{D}$ such that $f\in \Omega_{(t,h/\mathcal{D})}^{\Ba_\delta}$. We choose $h^{\Ba_\delta}_{(t,h/\mathcal{D})}$ to be any representative of the class $h/\mathcal{D}$.

For any $t\in S_0$ and $\alpha<\delta$, $g_{\alpha,\delta}(t)=t(\alpha)$, and for every $(t,h/\mathcal{D})\in S_{\Ba_\delta}\setminus S_0$  and $\alpha<\delta$, $g_{\alpha,\delta}((t,h/\mathcal{D}))=t(\alpha)$. Note that it already follows that for $\alpha<\beta<\delta$, $g_{\alpha,\beta}\circ g_{\beta,\delta}=g_{\alpha,\delta}$.

For any $t\in S_0$ let \[\rho^{\Ba_\delta}_t=\bigcup\{\rho_{t(\alpha)}^{\Ba_\alpha}:\alpha<\delta\}^\frown \langle 0\rangle.\] 

Now let $(t,h/\mathcal{D})\in S_{\Ba_\delta}\setminus S_0$. Since $\chi^{\kappa}=\chi$, there exists some $\gamma_t\leq \chi$ and an enumeration of $\prod_{\varepsilon<\kappa} A_{t,\varepsilon}/\mathcal{D}$ as  $\langle h_{t,\gamma}/\mathcal{D}: \gamma<\gamma_t\rangle$. Now for $(t,h/\mathcal{D})\in S_{\Ba_\delta}$ set \[{\rho^{\Ba_\delta}_{(t,h/\mathcal{D})}=\bigcup\{\rho_{t(\alpha)}^{\Ba_\alpha}:\alpha<\delta\}}^\frown \langle\gamma\rangle,\] where $h/\mathcal{D}=h_{t,\gamma}/\mathcal{D}$. Assume that $t=t_f$ for some $f\in \Omega$ and let $\alpha<\beta<\delta$. Since $g_{\alpha,\beta}(t(\beta))=t(\alpha)$, $\rho^{\Ba_\alpha}_{t(\alpha)}$ is an initial segment of $\rho^{\Ba_\beta}_{t(\beta)}$. This implies that $\rho^{\Ba_\delta}_{(t,h/\mathcal{D})}\in \chi^{\xi+1}$, where $\xi=\sup\{\zeta: \rho^{\Ba_\alpha}_{g_{\alpha,\delta}(t)}\in \chi^\zeta,\, \alpha<\delta\}$.

We check that $\Ba_\delta$ is an approximation. Item (1) of Definition \ref{D:def of approximation} follows from the definition of $\Omega^{\Ba_\delta}_{t}$ and the choice of $h^{\Ba_\delta}_t$, for $t\in S_{\Ba_\delta}$. 

We show item (2). Let $(t_1,h_1/\mathcal{D})\neq (t_2,h_2/\mathcal{D})\in S_{\Ba_\delta}\setminus S_0$, assume that $\rho^{\Ba_\delta}_{(t_1,h_1/\mathcal{D})}=\rho^{\Ba_\delta}_{(t_2,h_2/\mathcal{D})}$ and let $f_1\in \Omega^{\Ba_\delta}_{(t_1,h_1/\mathcal{D})},\, f_2\in \Omega^{\Ba_\delta}_{(t_2,h_2/\mathcal{D})}$. If $t_1\neq t_2$ then there exists some $\alpha<\delta$ such that $t_1(\alpha)\neq t_2(\alpha)$. But $\rho^{\Ba_\alpha}_{t_1(\alpha)}=\rho^{\Ba_\alpha}_{t_2(\alpha)}$ and hence $f_1\not\R f_2$. Assume that $t_1=t_2$. Since $\rho^{\Ba_\delta}_{(t_1,h_1/\mathcal{D})}=\rho^{\Ba_\delta}_{(t_2,h_2/\mathcal{D})}$, $h_1/\mathcal{D}=h_2/\mathcal{D}$ which gives a contradiction. Let $(t,h/\mathcal{D})\in S_{\Ba_\delta}\setminus S_0$ and $s\in S_0$. If $s\neq t$ then the same argument as above applies. On the other it cannot be that $s=t$ be the definition of $S_0$. If $t_1\neq t_2\in S_0$ then the same arguments as above applies.

Finally, we show that $\Ba_\alpha\trianglelefteq_{g_{\alpha,\delta}}\Ba_\delta$, for $\alpha<\delta$. Items (1), (2) and (4) are straightforward. We show item (3). Let $t\in S_0$ and let $\alpha^\prime<\delta$ be minimal such that $\Omega^{\Ba_{\alpha^\prime}}_{t(\alpha^\prime)}$ is trivial. If $\alpha^\prime\leq \alpha$ then $h_t^{\Ba_\delta}=h^{\Ba_{\alpha^\prime}}_{t(\alpha^\prime)}=h^{\Ba_{\alpha}}_{t(\alpha)}$. If $\alpha<\alpha^\prime$ then $\{\varepsilon<\kappa: h^{\Ba_\delta}_t(\varepsilon)\leq h^{\Ba_\alpha}_{t(\alpha)}(\varepsilon)\}\in \mathcal{D}$ because $h_t^{\Ba_\delta}=h^{\Ba_{\alpha^\prime}}_{t(\alpha^\prime)}$. Now let $(t,h/\mathcal{D})\in S_{\Ba_\delta}\setminus S_0$. Since $\Omega^{\Ba_\delta}_{(t,h/\mathcal{D})}$ is non-empty, we may choose some function $f\in \Omega^{\Ba_\delta}_{(t,h/\mathcal{D})}$. On the one hand, since $f\in \Omega^{\Ba_\alpha}_{t(\alpha)}$ and since $\Ba_\alpha$ is an approximation, $\{\varepsilon<\kappa: \Rg(f\restriction J_\varepsilon)\subseteq h^{\Ba_\alpha}_{t(\alpha)}(\varepsilon)\}\in \mathcal{D}$. On the other hand, since $f\in \Omega^{\Ba_\delta}_{(t,h/\mathcal{D})}$, $\{\varepsilon<\kappa: h(\varepsilon)=\min \{x\in A_{t,\varepsilon}:\Rg(f\restriction J_\varepsilon)\subseteq x\}\}\in \mathcal{D}$. Combining these observation with the fact that $h^{\Ba_\alpha}_{t(\alpha)}\in A_{t,\varepsilon}$, it follows that $\{\varepsilon<\kappa: h(\varepsilon)\leq h^{\Ba_\alpha}_{t(\alpha)}(\varepsilon)\}\in \mathcal{D}$ since it contains the intersection of both sets.
\end{proof}

\begin{conclusion}\label{Con:coloring-pcf}
There exists a function $c:\Omega\to \chi$ satisfying that if $f_1,f_2\in \Omega$ and $f_1 \R f_2$ then $c(f_1)\neq c(f_2)$.
\end{conclusion}
\begin{proof}
We define $(\Ba_\xi, \langle g_{\zeta,\xi} : \zeta<\xi\rangle)$ satisfying that $\Ba_\zeta\triangleleft_{g_{\zeta,\xi}}\Ba_\xi$ (for $\zeta<\xi$), by induction on $\xi<\sigma=(2^\kappa)^+$.
\begin{list}{•}{}
\item If $\xi=0$ then $\Omega^{\Ba_0}=\Omega$, $S_a=\{0\}$, $\Omega_{0}^{\Ba_0}=\Omega^{\Ba_0}$, $\rho^{\Ba_0}_0=\emptyset$ and $h_0^{\Ba_0}\in \mathcal{H}$ be the constant function $\theta$.
\item If $\xi=\alpha +1$ for some $\alpha<\xi$ then let $\Ba_{\alpha}\triangleleft_{g_{\alpha,\xi}}\Ba_\xi$ be the approximation supplied by Proposition \ref{P:approximation,proper-successor}. For any $\zeta\leq \alpha$ we define $g_{\zeta,\xi}=g_{\zeta,\alpha}\circ g_{\alpha,\xi}$.
\item If $\xi$ is a limit ordinal we apply Lemma \ref{L:approx-limit}. 
\end{list}

It follows by induction, and using Proposition \ref{P:approximation,proper-successor}, Lemma \ref{L:approx-limit}, that for $\alpha<\beta<\sigma$ and $t\in S_{\Ba_\alpha}$ and $s\in S_{\Ba_\beta}$, if $\rho^{\Ba_\alpha}_t\in \chi^{\lambda_1}$ and $\rho^{\Ba_\beta}_s\in \chi^{\lambda_2}$ then $\lambda_1<\lambda_2$ and hence $\rho^{\Ba_\alpha}_t\neq \rho^{\Ba_\beta}_s$.

\begin{claim}
\phantomsection
\begin{enumerate}
\item For any $\rho\in \chi^{<\sigma}$, \[\Omega_\rho:=\bigcup\{\Omega^{\Ba_\xi}_s: \xi<\sigma,\, s\in S_{\Ba_\xi},\, \Omega^{\Ba_\xi}_s \text{ is trivial and } \rho^{\Ba_\xi}_s=\rho\}\] is trivial. 
\item $\Omega=\bigcup_{\rho\in \chi^{<\sigma}}\Omega_\rho$.
\end{enumerate}
\end{claim}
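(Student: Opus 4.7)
My plan is to dispatch the two parts of the Claim separately, using a monotone length invariant on $\rho$ for~(1) and a descent argument for~(2).

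For~(1) I would first verify, by transfinite induction on $\xi < \sigma$, that the length of $\rho^{\Ba_\xi}_s$ is the same for every $s \in S_{\Ba_\xi}$; denote this common length by $L(\xi)$. One has $L(0) = 0$; the ``Moreover'' clause of Proposition~\ref{P:approximation,proper-successor} gives $L(\xi+1) = L(\xi) + 2$; and the ``Furthermore'' clause of Lemma~\ref{L:approx-limit} gives $L(\delta) = (\sup_{\alpha<\delta} L(\alpha)) + 1$ at limits, so $L$ is strictly monotone. Given any two pairs $(\xi_1,s_1),(\xi_2,s_2)$ from the set defining $\Omega_\rho$, the equality $\rho^{\Ba_{\xi_1}}_{s_1} = \rho^{\Ba_{\xi_2}}_{s_2} = \rho$ forces $L(\xi_1) = L(\xi_2)$, hence $\xi_1 = \xi_2 = \xi$. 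If $s_1 = s_2$ the two pieces are identical (and trivial); if $s_1 \neq s_2$ then property~(2) of Definition~\ref{D:def of approximation} applies since the $\rho$'s agree. Either way, $f_1 \not\R f_2$ for any $f_i \in \Omega^{\Ba_\xi}_{s_i}$, so $\Omega_\rho$ is trivial.

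For~(2) fix $f \in \Omega$ and let $s_f(\xi) \in S_{\Ba_\xi}$ be the unique index with $f \in \Omega^{\Ba_\xi}_{s_f(\xi)}$; write $h_\xi = h^{\Ba_\xi}_{s_f(\xi)}$. Assume toward contradiction that $\Omega^{\Ba_\xi}_{s_f(\xi)}$ is non-trivial for every $\xi < \sigma$. From $\Ba_\xi \triangleleft_{g_{\xi,\xi+1}} \Ba_{\xi+1}$ (Proposition~\ref{P:approximation,proper-successor}) we obtain $\{\varepsilon < \kappa : h_{\xi+1}(\varepsilon) < h_\xi(\varepsilon)\} \in \mathcal{D}$ for every $\xi$, while Lemma~\ref{L:approx-limit} gives $\{\varepsilon : h_\delta(\varepsilon) \leq h_\alpha(\varepsilon)\} \in \mathcal{D}$ for $\alpha < \delta$ at limits. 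Since $f \in \Omega^{\Ba_\xi}_{s_f(\xi)}$ and $\Ba_\xi$ is an approximation, one also has the lower bound $h_\xi(\varepsilon) \geq \max\Rg(f\restriction J_\varepsilon) + 1$ on a set in $\mathcal{D}$. The intended contradiction is that at some sufficiently cofinal limit $\delta < \sigma$, $h_\delta(\varepsilon) = \max\Rg(f\restriction J_\varepsilon) + 1$ on a $\mathcal{D}$-large set; then $h_\delta(\varepsilon)$ is a successor of cofinality $1$ on that set, putting us in the small-cofinality case, and the successor sub-piece $\Omega_{s,\xi_s}$ constructed in Proposition~\ref{P:approx-small-cof} forces $f$ into a trivial piece at step $\delta + 1$, contradicting non-triviality. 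The mechanism would be the limit construction of Lemma~\ref{L:approx-limit}: $h_\delta(\varepsilon)$ is the minimum of $A_{t_f,\varepsilon} = \{h_\alpha(\varepsilon) : \alpha<\delta\}$ above $\Rg(f\restriction J_\varepsilon)$, so a well-chosen $\delta$ should push this minimum to the lower bound.

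The hard part is rigorously justifying this last step. Because $\mathcal{D}$ is merely a non-principal ultrafilter on $\kappa$, with no countable completeness, the ordinal ultrapower $\mathcal{H}/\mathcal{D}$ need not be well-founded and a strictly $<_\mathcal{D}$-decreasing sequence of length $\sigma$ is not \emph{a priori} impossible. The argument must exploit the specific limit construction of Lemma~\ref{L:approx-limit}, the forced lower bound $h_\xi(\varepsilon) \geq \max\Rg(f\restriction J_\varepsilon)+1$, and the cardinal arithmetic $\chi^{<\sigma} = \chi$ with $\sigma = (2^\kappa)^+$ regular, presumably via a pcf-style bookkeeping of the accumulating sets $A_{t_f,\varepsilon}$ across successive limit stages, showing that the minimum of $A_{t_f,\varepsilon}$ above $\Rg(f\restriction J_\varepsilon)$ must stabilize to its infimum in fewer than $\sigma$-many steps.
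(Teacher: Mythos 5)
Your handling of part (1) is essentially the paper's argument: you observe that the length of $\rho^{\Ba_\xi}_s$ depends only on $\xi$ and is strictly increasing in $\xi$ (via the ``Moreover'' of Proposition~\ref{P:approximation,proper-successor} and the ``Furthermore'' of Lemma~\ref{L:approx-limit}), so equality of codes forces $\xi_1=\xi_2$, and then Definition~\ref{D:def of approximation}(2) or triviality of a single piece finishes it. That is fine.

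Part (2) is where you have a genuine gap, and to your credit you flag it yourself. You correctly notice that $\langle h_\xi : \xi<\sigma\rangle$ being strictly $<_\mathcal{D}$-decreasing is not by itself a contradiction, since $\mathcal{D}$ is only an ultrafilter on $\kappa$ with no completeness hypothesis and $\mathcal{H}/\mathcal{D}$ need not be well-founded. But the fix you then sketch --- trying to push $h_\delta(\varepsilon)$ down to the forced lower bound $\max\Rg(f\restriction J_\varepsilon)+1$ at some cleverly chosen limit $\delta$, then land in the successor stratum of Proposition~\ref{P:approx-small-cof} --- is not carried out, and it is not clear it can be. Nothing in the construction guarantees that the descending values $h_\xi(\varepsilon)$ ever attain the lower bound rather than converging to it from above along a decreasing but nonterminating sequence; the limit step only takes the minimum of the accumulated set $A_{t,\varepsilon}$, and successor steps can keep inserting new values strictly between the current value and the lower bound. ``pcf-style bookkeeping'' is gestured at but not supplied.

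The paper bypasses all of this with an Erd\H{o}s--Rado argument: color each pair $\alpha<\beta<\sigma$ by the least coordinate $\varepsilon<\kappa$ at which $h_\beta(\varepsilon)<h_\alpha(\varepsilon)$. Since $\sigma=(2^\kappa)^+$, Erd\H{o}s--Rado gives a homogeneous set $A\subseteq\sigma$ of size $\kappa^+$ in a single color $\varepsilon$, and then $\langle h_\alpha(\varepsilon):\alpha\in A\rangle$ is a genuinely (pointwise) strictly decreasing sequence of ordinals of length $\geq\omega$, which is impossible --- no well-foundedness of any ultrapower is needed, only well-foundedness of the ordinals themselves. This is the piece of the argument you are missing; it both explains why $\sigma=(2^\kappa)^+$ was chosen and sidesteps the convergence-to-infimum issue your route runs into.
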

\begin{claimproof}
$(1)$ Let $\rho\in \chi^{<\sigma}$ and $f_1,f_2\in \Omega_\rho$. By definition, there exist $s_1\in S_{\Ba_{\xi_1}}$ and $s_2\in S_{\Ba_{\xi_2}}$ such that $f_1\in \Omega^{\Ba_{\xi_1}}_{s_1}$ and $f_2\in \Omega^{\Ba_{\xi_2}}_{s_2}$. As noted above, since $\rho^{\Ba_\xi}_{s_1}=\rho^{\Ba_{\xi}}_{s_2}$, necessarily, $\xi=\xi_1=\xi_2$.  If $s_1=s_2$ then $f_1\not\R f_2$ since $\Omega^{\Ba_{\xi}}_{s_1}=\Omega^{\Ba_{\xi}}_{s_2}$ is trivial. If $s_1\neq s_2$ then by the definition of an approximation, since $\rho^{\Ba_\xi}_{s_1}=\rho^{\Ba_{\xi}}_{s_2}$, $f_1\not\R f_2$.

$(2)$ Assume there exists some $f\in \Omega\setminus \bigcup_{\rho\in \chi^{<\sigma}}\Omega_\rho$.
We construct a a sequence of function $\langle h_\xi\in \mathcal{H}: \xi<\sigma \rangle$ satisfying that for any $\alpha<\beta<\sigma$, $h_\beta<_\mathcal{D} h_\alpha$.

For any $\xi<\sigma$ let $h_\xi=h^{\Ba_\xi}_t$ for the unique $t\in S_{\Ba_\xi}$ such that $f\in \Omega^{\Ba_\xi}_t$. By assumption, $\Omega^{\Ba_\xi}_t$ is non-trivial for any such $\xi$ (otherwise $f\in \Omega_\rho$ for $\rho=\rho_t^{\Ba_\xi}$). For any $\alpha<\beta<\sigma$, since $\Ba_\alpha\triangleleft_{g_{\alpha,\beta}} \Ba_\beta$, $h_\beta<_\mathcal{D} h_\alpha$.
%

We color pairs of function $\{(h_\alpha,h_\beta): \alpha<\beta<\sigma\}$ by $\kappa$ colors, by setting that $(h_\alpha,h_\beta)$ has color $\varepsilon_{\alpha,\beta}<\kappa$ if $\varepsilon_{\alpha,\beta}$ is the minimal $\varepsilon$ for which $h_\beta(\varepsilon)<h_\alpha(\varepsilon)$. We know that such an $\varepsilon$ exists, since $h_\beta<_\mathcal{D} h_\alpha$. By Erd\"os-Rado there exists a subset $A\subseteq \sigma$ of cardinality $\kappa^+$ and $\varepsilon<\kappa$ such that for every $\alpha<\beta\in A$, $h_\beta(\varepsilon)<h_\alpha(\varepsilon)$. This contradicts the fact that the ordinals are well-ordered.
\end{claimproof}

Recalling that $\chi^{<\sigma}=\chi$ (as cardinals), we may now define $c:\Omega\to \chi$ by choosing for every $f\in \Omega$ some $\rho\in \chi^{<\sigma}$ such that $f\in \Omega_\rho$ and setting $c(f)=\rho$. 
\end{proof}

\section{Conclusion: stable graphs}\label{s:conclusion}

We combine the results of the previous sections to conclude.

\begin{theorem}\label{T:Taylor for infinitary EM-models}
Let $\mathcal{L}$ be a first order language containing a binary relation $E$. Let $T$ be an $\mathcal{L}$-theory specifying that $E$ is a symmetric and irreflexive relation. Let $G=(V;E,\dots)\models T$ be an infinitary EM-model based on $(\alpha,\theta)$, where $\alpha\in \kappa^{U}$ for some set $U$, $\kappa\geq \aleph_0$ a cardinal and $\theta$ an ordinal with $\kappa<\theta$. Let $\kap>2^{2^{<(\kappa+\aleph_1)}}+|T|\cdot |U|$ be a regular cardinal.

If $\chi(G)\geq \kap$ then $G$ contains all finite subgraphs of $\Sh_n(\omega)$ for some $n\in \mathbb{N}$.
\end{theorem}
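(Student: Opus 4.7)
The plan is to use Lemma \ref{L:underlying set of Inf-EM is Inf-EM} to realize all of $V$ as indexed by an indiscernible sequence, then apply the chromatic number assumption to find a subset on which the edge relation is controlled purely by order types, and finally invoke Theorem \ref{T:shift graphs in order-type-graphs} from the previous section.

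First I would apply Lemma \ref{L:underlying set of Inf-EM is Inf-EM} to the generating $(\alpha,\theta)$-indiscernible sequence $a$ whose definable closure is $V$. This produces a set $\widehat{U}$ with $|\widehat{U}|\leq |T|\cdot |U|\cdot \kappa^{<\kappa}$, a function $\widehat{\alpha}\in\kappa^{\widehat{U}}$, and an $(\widehat{\alpha},\theta)$-indiscernible sequence $b=\langle b_{x,\eta}:x\in\widehat{U},\eta\in(\theta^{\U{\widehat{\alpha}_x}})_<\rangle$ whose underlying set equals $V$. Since $\kappa^{<\kappa}\leq 2^{<(\kappa+\aleph_1)}$, the bound on $\kap$ gives $|\widehat{U}|<\kap$. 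Cover $V=\bigcup_{x\in\widehat{U}}V_x$, where $V_x=\{b_{x,\eta}:\eta\in(\theta^{\U{\widehat{\alpha}_x}})_<\}$. Since $\kap$ is regular and $|\widehat{U}|<\kap$, Fact \ref{F:basic prop}(1) and the assumption $\chi(G)\geq \kap$ force some $x\in\widehat{U}$ with $\chi(V_x,E\restriction V_x)\geq \kap$.

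Next I would transfer to an order-type setting. Set $\beta=\widehat{\alpha}_x<\kappa$ and consider the surjection $\varphi:(\theta^{\U\beta})_<\to V_x$ given by $\eta\mapsto b_{x,\eta}$. Pull back the edge relation by defining $\eta_1 \widetilde{E} \eta_2 \iff \varphi(\eta_1)\E \varphi(\eta_2)$. A coloring of $(V_x,E)$ lifts via $\varphi$ to one of $((\theta^{\U\beta})_<,\widetilde{E})$ with the same number of colors, and conversely (choosing one preimage per vertex). Hence $\chi((\theta^{\U\beta})_<,\widetilde{E})\geq \kap$. By $(\widehat{\alpha},\theta)$-indiscernibility, whether $\eta_1\widetilde{E}\eta_2$ holds depends only on $\otp(\eta_1,\eta_2)$, so $\widetilde{E}=\bigcup_{(\bar a,\bar b)\in \mathcal{P}} E_{\bar a,\bar b}$ for a set $\mathcal{P}$ of pair order-type representatives. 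A pair order type is the isomorphism class of the finite-language structure $(\bar a\cup \bar b,<,\bar a,\bar b)$ on a set of size $<\kappa+\aleph_1$, so $|\mathcal{P}|\leq 2^{<(\kappa+\aleph_1)}$.

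The main calculation is then: if every $\chi((\theta^{\U\beta})_<,E_{\bar a,\bar b})\leq \beth_2(\aleph_0)$, then by Fact \ref{F:basic prop}(2),
\[
\chi((\theta^{\U\beta})_<,\widetilde{E})\leq \beth_2(\aleph_0)^{|\mathcal{P}|}\leq \left(2^{2^{\aleph_0}}\right)^{2^{<(\kappa+\aleph_1)}}=2^{2^{<(\kappa+\aleph_1)}}<\kap,
\]
using $2^{\aleph_0}\leq 2^{<(\kappa+\aleph_1)}$. This contradicts $\chi((\theta^{\U\beta})_<,\widetilde{E})\geq\kap$, so some $(\bar a,\bar b)\in\mathcal{P}$ satisfies $\chi((\theta^{\U\beta})_<,E_{\bar a,\bar b})>\beth_2(\aleph_0)$.

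Finally I would combine this with the results of Section \ref{S:order type graphs}. Since $|\beta|^+ + \aleph_0\leq \kappa<\theta$, Theorem \ref{T:shift graphs in order-type-graphs} applies to $((\theta^{\U\beta})_<,E_{\bar a,\bar b})$ and produces $m\in\mathbb{N}$ such that this graph contains all finite subgraphs of $\Sh_m(\omega)$. Since $E_{\bar a,\bar b}\subseteq\widetilde{E}$, the map $\varphi$ is a graph homomorphism from $((\theta^{\U\beta})_<,E_{\bar a,\bar b})$ into $G$, and Fact \ref{F:homomorphism is enough} yields $n\leq m$ such that $G$ contains all finite subgraphs of $\Sh_n(\omega)$. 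The main obstacle in executing this plan is keeping track of the cardinal bookkeeping and verifying that the hypothesis on $\kap$ is exactly what the three inequalities above demand; the model-theoretic and graph-theoretic inputs themselves are quoted as black boxes.
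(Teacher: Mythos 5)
Your proposal is correct and follows essentially the same route as the paper: reduce to a single term-slice using regularity of $\kap$ and Fact \ref{F:basic prop}(1), pull back to the index graph on $(\theta^{\U{\beta}})_<$, decompose the pulled-back edge relation by order type via Fact \ref{F:basic prop}(2), extract a single order-type graph of chromatic number $>\beth_2(\aleph_0)$ by the counting argument, and finish with Theorem \ref{T:shift graphs in order-type-graphs} and Fact \ref{F:homomorphism is enough}. The only differences are cosmetic: the paper passes to the pullback graph $(B,R)$ at the outset (invoking Fact \ref{F:basic prop}(4) and Fact \ref{F:homomorphism is enough} early) and bounds the number of order types by $2^{|\alpha_i|+\aleph_0}$, while you keep the surjection $\varphi$ explicit and bound by $2^{<(\kappa+\aleph_1)}$; both choices give the same inequality against $\kap$.
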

\begin{proof}

By Lemma \ref{L:underlying set of Inf-EM is Inf-EM} there exists some $(\widehat \alpha,\theta)$-indiscernible sequence $b=\langle b_{i,\eta}:i\in \widehat{U},\, \eta\in (\theta^{\U{\widehat{\alpha}_i}})_<\rangle$ whose underlying set is $V$, where $\widehat \alpha\in \kappa^{\widehat U}$ and  $\widehat U$ is a set such that $|\widehat{U}|\leq |T|\cdot |U|\cdot \kappa^{<\kappa}.$ 

Let $B=\{(i,\eta):i\in \widehat U,\, \eta\in (\theta^{\U{\widehat{\alpha}_i}})_<\}$ and $R=\{((i_1,\eta_1),(i_2,\eta_2)): b_{i_1,\eta_1} \E b_{i_2,\eta_2}\}$. Since $(i,\eta)\mapsto b_{i,\eta}$ is surjective and $((i_1,\eta_1),(i_2,\eta_2))\in R \iff (b_{i_1,\eta_1} b_{i_2,\eta_2})\in E$, $\chi(B,R)=\chi(G)\geq \kap$ (by Fact \ref{F:basic prop}(4)). Moreover, by Fact \ref{F:homomorphism is enough} it is enough to prove the conclusion for the graph $(B,R)$.

For any $i\in \widehat{U}$ let $B_i=\{(i,\eta):\eta\in (\theta^{\U{\alpha_i}})_<\}$. By Fact \ref{F:basic prop}(1), since $B=\bigcup_{i\in \widehat{U}} B_i$, $\kap\leq \chi(B,R)\leq \sum_{i\in \widehat{U}}\chi(B_i,R\restriction B_i)$. By definition\footnote{As $2^{<\kappa}=\sup\{2^\mu:\mu<\kappa\}$, if $2^{<\kappa}<\kappa$ then $2^{2^{<\kappa}}\leq 2^{<\kappa}$.} $\kappa\leq 2^{<\kappa}$ which implies $\kappa^{<\kappa}\leq \kappa^{\kappa}=2^{\kappa}\leq 2^{2^{<\kappa}}$ and thus $\kap>|\widehat U|$. Since $\kap$ is a regular cardinal there exists $i\in \widehat U$ with $\chi(B_i,R\restriction B_i)\geq \kap$. As a result, it is enough to prove the conclusion for the graph $((\theta^{\U{\widehat{\alpha}_i}})_<,S)$, where $S=\{(\eta_1,\eta_2):(i,\eta_1)\R (i,\eta_2)\}$.

For $P=\{\otp(\bar a,\bar b):(\bar a,\bar b)\in S\}$, by $(\widehat{\alpha},\theta)$-indiscernibility $S=\bigcup_{p\in P} \{(\bar c,\bar d)\in ((\theta^{\U{\widehat{\alpha}_i}})_<)^2 :\otp(\bar c,\bar d)=p\vee \otp(\bar d,\bar c)=p\}$. 

By Fact \ref{F:basic prop}(2), 
\[\kap\leq \chi((\theta^{\U{\widehat{\alpha}_i}})_<,S))\leq \prod_{p\in P}\chi ((\theta^{\U{\widehat{\alpha}_i}})_<,P_p),\] where $P_p=\{(\bar c,\bar d)\in ((\theta^{\U{\widehat{\alpha}_i}})_<)^2 :\otp(\bar c,\bar d)=p\vee \otp(\bar d,\bar c)=p\}$. 
Assume towards a contradiction that $\chi ((\theta^{\U{\widehat{\alpha}_i}})_<,P_p)\leq \beth_2(\aleph_0)$ for all $p\in P$. Hence $\kap\leq \beth_2(\aleph_0)^{2^{|\alpha_i|+\aleph_0}}\leq \beth_2(|\alpha_i|+\aleph_0)$. Since $|\alpha_i|+\aleph_0<\kappa+\aleph_1$ and $\kap>2^{2^{<(\kappa+\aleph_1)}}$, we derive a contradiction.

Consequently, there exists $p\in P$ with $\chi ((\theta^{\U{\widehat{\alpha}_i}})_<,P_p)>\beth_2(\aleph_0)$ and we may conclude by Theorem \ref{T:shift graphs in order-type-graphs}.
\end{proof}

\begin{corollary}\label{C:main corollary}
Let $G=(V,E)$ be a stable graph. If $\chi(G)>\beth_2(\aleph_0)$ then $G$ contains all finite subgraphs of $\Sh_n(\omega)$ for some $n\in \mathbb{N}$.
\end{corollary}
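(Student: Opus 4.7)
The plan is to combine the two main theorems of the paper. Given a stable graph $G=(V,E)$ with $\chi(G)>\beth_2(\aleph_0)$, I would work in the countable language $\mathcal{L}=\{E\}$ with $T = \mathrm{Th}(G)$, a stable $\mathcal{L}$-theory with $|T|=\aleph_0$. The idea is to pass from $G$ to a suitably saturated infinitary EM-model $M\models T$ into which $G$ embeds elementarily, and then invoke Theorem~\ref{T:Taylor for infinitary EM-models}. Since containing a given \emph{finite} subgraph of $\Sh_n(\omega)$ is a first-order (existential) statement, any such containment in $M$ transfers back to $G$ by elementarity, so it suffices to establish the conclusion for $M$.

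Concretely, I would fix a Skolem expansion $T\subseteq T^{sk}$ with $|T^{sk}|=\aleph_0$ and apply Theorem~\ref{T:existence of gen em model in stable} with $\kappa=\aleph_1$ (which is regular and $\geq\kappa(T)+\aleph_1$ since $T$ is countable and stable), $\mu=2^{\aleph_1}$ (so that $\mu^{<\kappa}=\mu\geq 2^{\kappa+|T|}$), and $\lambda$ chosen large enough that $\lambda^{<\kappa}=\lambda\geq \max(|G|,\mu)$. This produces an infinitary EM-model $M^{sk}\models T^{sk}$ based on $(\alpha,\lambda)$ with $\alpha\in\kappa^U$ for some $U$ of size at most $\mu=2^{\aleph_1}$, such that $M=M^{sk}\restriction\mathcal{L}$ is saturated of cardinality $\lambda$. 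Because $M$ is saturated of size $\lambda\geq |G|$ and has the same theory as $G$, the graph $G$ elementarily embeds into $M$, which is enough to conclude $\chi(M)\geq\chi(G)>\beth_2(\aleph_0)$, hence $\chi(M)\geq(\beth_2(\aleph_0))^+$.

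Finally, I would apply Theorem~\ref{T:Taylor for infinitary EM-models} with $\kap=(\beth_2(\aleph_0))^+$. The required inequality $\kap>2^{2^{<(\kappa+\aleph_1)}}+|T|\cdot|U|$ follows from the cardinal arithmetic: with $\kappa=\aleph_1$ one has $2^{<(\kappa+\aleph_1)}=2^{<\aleph_1}=2^{\aleph_0}$, so $2^{2^{<(\kappa+\aleph_1)}}=\beth_2(\aleph_0)$, while $|T|\cdot|U|\leq\aleph_0\cdot 2^{\aleph_1}\leq\beth_2(\aleph_0)$. Theorem~\ref{T:Taylor for infinitary EM-models} then produces some $n\in\mathbb{N}$ such that $M$ contains all finite subgraphs of $\Sh_n(\omega)$, and by elementarity the same holds for $G$.

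There is no real obstacle here beyond the cardinal-arithmetic bookkeeping: all the difficulty has been pushed into Theorems~\ref{T:existence of gen em model in stable} and~\ref{T:Taylor for infinitary EM-models}. The only point worth highlighting is that the proof critically uses the fact that $T$ lives in a countable language (it is the theory of a graph), which lets us take $\kappa=\aleph_1$ and keeps the threshold in Theorem~\ref{T:Taylor for infinitary EM-models} at exactly $\beth_2(\aleph_0)$, matching the hypothesis.
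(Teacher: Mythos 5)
Your proof is correct and takes essentially the same route as the paper: identical choices $\kappa=\aleph_1$, $\mu=2^{\aleph_1}$, a sufficiently large $\lambda$, passing to a saturated reduct of an infinitary EM-model via Theorem~\ref{T:existence of gen em model in stable}, and then invoking Theorem~\ref{T:Taylor for infinitary EM-models} with $\kap=(\beth_2(\aleph_0))^+$ after the same cardinal-arithmetic check $2^{2^{<\aleph_1}}+\aleph_0\cdot 2^{\aleph_1}\le\beth_2(\aleph_0)$. The only cosmetic difference is that the paper fixes $\lambda=2^{\max\{\mu,|G|\}}$ while you leave it as ``large enough with $\lambda^{<\kappa}=\lambda\ge\max(\mu,|G|)$,'' which amounts to the same thing.
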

\begin{proof}
Let $G=(V,E)$ be stable graph, $T=Th(G)$ and $T^{sk}$ be a complete expansion of $T$ with definable Skolem functions in the language $E\in \mathcal{L}^{sk}$.  

We apply Theorem \ref{T:existence of gen em model in stable} with $\kappa=\aleph_1$, $\mu=2^{\aleph_1}$ and $\lambda=2^{\max\{\mu,|G|\}}$. We get an infinitary EM-model $\mathcal{G}^{sk}\models T^{sk}$ based on $(\alpha,\lambda)$, where $\alpha \in \kappa^U$ for some set $U$ of cardinality at most $\mu$, such that $\mathcal{G}=\mathcal{G}^{sk}\restriction \{E\}$ is saturated of cardinality $\lambda$. Since $\mathcal{G}$ is saturated of cardinality $>|G|$, we may embed $G$ as an elementary substructure of $\mathcal{G}$. Since $\chi(\mathcal{G})\geq \chi(G)>\beth_2(\aleph_0)$ and the conclusion is an elementary property, it is enough to show it for $\mathcal{G}$. 

Since $2^{2^{<(\kappa+\aleph_1)}}+|T|+|U|\leq 2^{2^{\aleph_0}}+\aleph_0+\mu\leq \beth_2(\aleph_0)+2^{\aleph_1}=  \beth_2(\aleph_0)$, Theorem \ref{T:Taylor for infinitary EM-models} applies with $\theta=\lambda$ and $\kap=(\beth_2(\aleph_0))^+$.
\end{proof}

\bibliographystyle{alpha}
\bibliography{1211}

\end{document}